\author{Shoji Yokura}
\address
{Graduate School of Science and Engineering,
Kagoshima University, 21-35 Korimoto 1-chome, Kagoshima 890-0065, Japan}
\email {yokura@sci.kagoshima-u.ac.jp}
\title [Cobordism bicycles]
{Cobordism bicycles of vector bundles}
\thanks {
\quad \emph{keywords} : (co)bordism, algebraic cobordism, algebraic cobordism of bundles, correspondence \\
\quad \emph{Mathematics Subject Classification 2000}: 55N35, 55N22, 14C17, 14C40, 14F99, 19E99}
\numberwithin{equation}{section}
\newtheorem{thm}[equation]{Theorem}
\newtheorem{pro}[equation]{Proposition}
\newtheorem{cor}[equation]{Corollary}
\newtheorem{lem}[equation]{Lemma}
\theoremstyle{definition}
\newtheorem{defn}[equation]{Definition}
\newtheorem{rem}[equation]{Remark}
\def\alp{\alpha}
\def\be{\beta}
\def\jeden{1\hskip-3.5pt1}
\def\bigstar{\mathbf{\star}}
\def\Cal{\mathscr}
\def\ga{\gamma}
\def \bB{\mathbb B}
\def \bOB{\mathbb {OB}}
\def \bOM{\mathbb {OM}}
\def\bZ{\mathbb Z}
\def\op{\operatorname}
\begin{document} 
 
\begin{abstract}  
      The main ingredient of the algebraic cobordism 
      of M. Levine and F. Morel is a cobordism cycle of the form $(M \xrightarrow {h}  X; L_1, \cdots, L_r)$ with 
 a proper map $h$ from a smooth variety $M$ and line bundles $L_i$'s over $M$. 
In this paper we consider a \emph{cobordism bicycle} of a finite set of line bundles $(X \xleftarrow p V \xrightarrow s Y; L_1, \cdots, L_r)$ with a proper map 
$p$ and a smooth map 
$s$ and line bundles $L_i$'s  over $V$. We will show that the Grothendieck group $\mathscr Z^*(X, Y)$ of the abelian monoid of the isomorphism classes of cobordism bicycles of finite sets of line bundles satisfies properties similar to those of Fulton-MacPherson's bivariant theory and also that $\mathscr Z^*(X, Y)$ is a \emph{universal} one among such abelian groups, i.e., for any abelian group $\mathscr B^*(X, Y)$ satisfying the same properties there exists a unique Grothendieck transformation $\gamma: \mathscr Z^*(X,Y) \to \mathscr B^*(X,Y)$ preserving the unit.
\end{abstract} 

\maketitle


\section{Introduction}\label{intro} 
V. Voevodsky first introduced algebraic cobordism or higher algebraic cobordism $\op{MGL}^{*,*}(X)$ in the context of motivic homotopy theory and used it in his proof of the Milnor conjecture \cite{Vo1, Vo2, Vo3}. 
Later, in an attempt to understand $\op{MGL}^{*,*}(X)$ better,   
M. Levine and F. Morel \cite{LM} constructed another algebraic cobordism $\Omega_*(X)$ in terms of what they call \emph{a cobordism cycle} (which is of the form $[V \xrightarrow h X; L_1, L_2, \cdots, L_r]$ with line bundles $L_i$'s over $V$ which is  smooth) and some relations on these cobordism cycles, as the universal oriented cohomology theory. To be a bit more precise, using cobordism cycles they first defined \emph{an oriented Borel--Moore functor $\mathcal Z_*$ with products}  satisfying twelve conditions (D1) - (D4) and (A1) - (A8), and then defined \emph{an oriented Borel--Moore functor with products of geometric type} by further imposing on the functor $\mathcal Z_*$ the relations $\mathcal R$ corresponding to three axioms (Dim) (dimension axiom), (Sect) (section axiom) and (FGL) (formal group law axiom), which correspond to \emph{``of geometric type''}. The functor  $\mathcal Z_*/\mathcal R$ is nothing but Levine--Morel's algebraic cobordism $\Omega_*$. In \cite{Le} M. Levine showed that there is an isomorphism $\Omega_*(X) \cong \op{MGL}^{2*,*}(X)$ for smooth $X$.

 In \cite{FM} W. Fulton and R. MacPherson have introduced \emph{bivariant theory} $\mathbb B(X \xrightarrow f Y)$  with an aim to deal with Riemann--Roch type theorems for singular spaces and to unify them. The extreme cases $\mathbb B_*(X):= \mathbb B^{-*}(X \xrightarrow {\pi_X} pt)$ becomes a covariant functor and $\mathbb B^*(X):= \mathbb B^{*}(X \xrightarrow {\op{id}_X} X)$ becomes a contravariant functor. In this sense  $\mathbb B(X \xrightarrow f Y)$ is called a \emph{bivariant} theory. In \cite{Yokura-obt} (cf. \cite{Yokura-obt2}) the author introduced \emph{an oriented bivariant theory} and \emph{a universal oriented bivariant theory} in order to construct a bivariant-theoretic version $\Omega^*(X \xrightarrow f Y)$ of Levine--Morel's algebraic cobordism so that the covariant part $\Omega^{-*}(X \xrightarrow {\pi_X} pt)$ becomes isomorphic to Levine--Morel's algebraic cobordism $\Omega_*(X)$.
 
 Our universal oriented bivariant theory $\mathbb {OM}_{sm}^{prop}(X \xrightarrow f Y)$ is defined to be the Grothendieck group of the abelian monoid of the isomorphism classes $[V \xrightarrow p X; L_1, L_2, \cdots, L_r]$ such that
\begin{enumerate}
\item $h: V \to X$ is a \emph{proper} map,
\item the composite $f \circ h: V \to Y$ is a \emph{smooth} map. 
(Note that this requirement implies that if the target $Y$ is the point $pt$, then the source $V$ has to be smooth, thus $[V \xrightarrow h X; L_1, L_2, \cdots, L_r]$ becomes a cobordism cycle in the sense of Levine--Morel.)
\end{enumerate}
Here for the monoid we consider the following addition 
\begin{eqnarray} \label{add}
\quad \quad \quad [V_1 \xrightarrow {p_1} X; L_1, \cdots, L_r]   + [V_2  \xrightarrow {p_2} X; L'_1, \cdots, L'_r] \hspace{3cm} \\
 := [V_1 \sqcup V_2 \xrightarrow {p_1 \sqcup p_2} X; L_1 \sqcup L'_1, \cdots, L_r \sqcup L'_r] \nonumber
\end{eqnarray}
where $\sqcup$ is the disjoint sum and $L_i \sqcup L'_i$  is a line bundle over $V_1 \sqcup V_2$ such that $(L_i \sqcup L'_i)|_{V_1} =L_i$ and $(L_i \sqcup L'_i)|_{V_2} =L'_i$.
In other words $\mathbb {OM}_{sm}^{prop}(X \xrightarrow f Y)$ is the free abelian group generated by the isomorphism classes $[V \xrightarrow p X; L_1, L_2, \cdots, L_r]$ modulo the additivity relation (\ref{add}).\footnote{We remark that in \cite{Yokura-obt} we consider the free abelian group generated by the isomorphism classes $[V \xrightarrow p X; L_1, L_2, \cdots, L_r]$, however the results in \cite{Yokura-obt} still hold even if we consider the Grothendieck group by modding it out by the additivity relation (\ref{add}).}

\begin{equation}\label{forget}
\xymatrix{
& \{L_1, \cdots L_r\} \ar[d]  \\
& V \ar[dl]_h \ar[dr]^{f \circ h} &\\
X \ar[rr]_f &&  Y
} \overset {\text{forget the map $f$ }}{======\Longrightarrow } 
\xymatrix{
& \{L_1, \cdots L_r\} \ar[d]  \\
& V \ar[dl]_h \ar[dr]^{f \circ h} &\\
X  &&  Y
}
\end{equation}
If we forget or ignore the given map $f$ in the left-hand-side diagram above, then we get the right-hand-side diagram above, which is 
$$\text{\emph{a correspondence $X \xleftarrow h V \xrightarrow {f \circ h}  Y$ 
with a finite set of line bundles $\{ L_1, \cdots, L_r \},$}}$$
where  $h: V \to X$ is a proper map and $f \circ h :V \to Y$ is a smooth map. Such a correspondence (or sometimes called a span or a roof) can be considered for any pair $(X, Y)$ of varieties $X$ and $Y$:
$$
\xymatrix{
& \{L_1, \cdots L_r\} \ar[d]  \\
& V \ar[dl]_p \ar[dr]^s &\\
X &&  Y, 
}  \qquad \xymatrix{
&& E \ar[d]  \\
&& V \ar[dll]_p \ar[drr]^s &&\\
X &&&&  Y
}
$$
with a proper map $p$ and a smooth map $s$. In the right-hand-side diagram $E$ is one vector bundle over $V$, not necessarily a line bundle. These two correspondences are denoted by
$(X \xleftarrow p V \xrightarrow s Y; L_1, \cdots L_r)$ and $(X \xleftarrow p V \xrightarrow s Y; E)$ respectively.

It turns out that such a correspondence has been already studied in $C^*$-algebra, in particular for Kasparov's KK-theory $KK(X, Y)$ (\cite{Ka}), \emph{another kind of bivariant theory} which has been studied by many people in operator theory. For example, 
in \cite{CS} (cf. \cite{BD} and \cite{EM3})  A. Connes and G. Skandalis consider $(X \xleftarrow b M \xrightarrow f Y; \xi)$ with $b$ a proper map, $f$ a smooth $K$-oriented map and $\xi$ a vector bundle over $M$. In \cite{BB} P. Baum and J. Block consider such a correspondence for singular spaces with a group action on it and call such a correspondence \emph{an equivariant bicycle}. So we shall call the above correspondence \emph{a cobordism bicycle of vector bundles}.
 
In our previous paper \cite{Yokura-enriched} we consider the above cobordism bicycle $(X \xleftarrow p V \xrightarrow s Y; E)$ of vector bundles as \emph{a morphism from $X$ to $Y$} and furthermore consider the enriched category of such cobordism bicycles of vector bundles. Then we extend Baum--Fulton--MacPherson's Riemann--Roch (or Todd class) transformation $\tau: G_0(-) \to H_*(-) \otimes \mathbb Q$ (see \cite{BFM}) to this enriched category. 

If we consider the above (\ref{forget}), it is quite natural or reasonable to think that there must be a connection or a relation between the above two kinds of bivariant theories, Fulton--MacPherson's bivariant theory (in topology) and Kasparov's bivariant theory (in operator theory). So, \emph{as an intermediate theory between these two theories} we consider the free abelian group $\mathscr Z^{*}(X, Y)$ generated by isomorphism classes of cobordism bicycles of vector bundles and 
finite sets of line bundles modulo the additive relation like (\ref{add}):
\begin{equation*}\label{add-vect}
[X \xleftarrow {p_1} V_1 \xrightarrow {s_1} Y; E_1] + [X \xleftarrow {p_2} V_2 \xrightarrow {s_2} Y; E_2]:= [X \xleftarrow {p_1 \sqcup p_2 } V_2 \xrightarrow {s_1 \sqcup s_2} Y; E_1 \sqcup E_2],
\end{equation*}
\begin{align*}\label{add-lines}
[X \xleftarrow {p_1} V_1 \xrightarrow {s_1} Y; L_1, \cdots, L_r] & + [X \xleftarrow {p_2} V_2 \xrightarrow {s_2} Y; L'_1, \cdots, L'_r] \\
& := [X \xleftarrow {p_1 \sqcup p_2 } V_2 \xrightarrow {s_1 \sqcup s_2} Y; L_1 \sqcup L'_1, \cdots, L_r \sqcup L'_r].
\end{align*}

For example we can show the following
\begin{thm}
For a pair $(X,Y)$ the Grothendieck group $\mathscr Z^{*}(X, Y)$ of the abelian monoid of the isomorphism classes of cobordism bicycles of finite sets of line bundles satisfies the following (similar to those of Fulton--MacPherson's bivariant theory):

\noindent
(1) it is equipped with the following three operations

\begin{enumerate}
\item (product)  \quad $\bullet: \mathscr Z^i(X, Y) \otimes \mathscr Z^j(Y,Z) \to \mathscr Z^{i+j}(X, Z)$

\item (Pushforward) 
\begin{enumerate}
\item For a \emph{proper} map $f:X \to X'$, $f_*:\mathscr Z^i(X,Y) \to \mathscr Z^i(X',Y)$.
\item For a \emph{smooth} map $g:Y \to Y'$, ${}_*g \footnote{For this unusual notation ${}_*g$ instead of $g_*$ see \S 4}: \mathscr Z^i(X,Y) \to \mathscr Z^{i+\op{dim}g}(X,Y')$.
\end{enumerate}

\item (Pullback) 
\begin{enumerate}
\item For a \emph{smooth} map $f:X' \to X$, $f^*: \mathscr Z^i(X,Y) \to  \mathscr Z^{i+\op{dim}f}(X',Y)$.
\item For a \emph{proper} map $g:Y' \to Y$, ${}^*g \footnote{For this unusual notation ${}^*g$ instead of $g^*$ see \S 4}: \mathscr Z^i(X,Y) \to  \mathscr Z^i(X,Y')$.
\end{enumerate}

\end{enumerate}

(2) the three operations satisfies the following nine properties:
\begin{enumerate}
\item[($A_1$)] Product is associative.
\item[($A_2$)] Pushforward is functorial.
\item[($A_2$)'] Proper pushforward and smooth pushforward commute.
\item[($A_3$)] Pullback is functorial.
\item[($A_3$)'] Proper pullback and smooth pullback commute.
\item[($A_{12}$)] Product and pushforward commute. 
\item[($A_{13}$)] Product and pullback commute.
\item[($A_{23}$)] Pushforward and pullback commute.
\item[($A_{123}$)] Projection formula.
\end{enumerate}
 
\noindent
(3) $\mathscr Z^*$ has units, i.e., there is an element $1_X \in \mathscr Z^0(X,X)$ such that $1_X \bullet \alp = \alp$ for any element $\alp \in \mathscr Z^*(X, Y)$ and $\be \bullet 1_X = \be$ for any element $\be \in \mathscr Z^*(Y, X)$.

\noindent
(4) $\mathscr Z^*$ satisfies PPPU (Pushforward-Product Property for units) and PPU (Pullback Property for units). (For the details of these properties, see Lemma \ref{pppu} and Lemma \ref{ppu}.)

\noindent
(5) $\mathscr Z^*$ is equipped with the Chern class operators: for a line bundle $L$ over $X$ and a line bundle $M$ over $Y$
$$c_1(L) \bullet: \mathscr Z^i(X,Y) \to \mathscr Z^{i+1}(X,Y), \bullet c_1(M): \mathscr Z^i(X,Y) \to \mathscr Z^{i+1}(X,Y)$$
which satisfy the properties listed in Lemma \ref{ch-op}.

\end{thm}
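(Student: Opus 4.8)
The plan is to construct each of the three operations explicitly on the level of cobordism bicycles, check that they are well-defined on isomorphism classes and compatible with the additive relations (hence descend to the Grothendieck groups), and then verify the nine compatibility properties one at a time by tracing through the relevant fibre-product diagrams. The guiding principle throughout is the classical observation behind Fulton--MacPherson's bivariant theory: a correspondence $(X \xleftarrow p V \xrightarrow s Y)$ with $p$ proper and $s$ smooth behaves like a bivariant class, and the fibre product of two such correspondences along $Y$ again has the required properness/smoothness, because properness is stable under base change and composition, smoothness is stable under base change and composition, and base change along a smooth map preserves properness while base change along a proper map preserves smoothness.

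First I would define the product: given $(X \xleftarrow p V \xrightarrow s Y; L_1,\dots,L_r)$ and $(Y \xleftarrow q W \xrightarrow t Z; M_1,\dots,M_k)$, form the fibre product $V \times_Y W$, which carries a proper map to $X$ (pullback of $p$ along the smooth $q$, post-composed with $\ldots$, hence proper) and a smooth map to $Z$ (pullback of $t$ along $\ldots$ post-composed with $s$-side, hence smooth), and take as line bundles the pullbacks of all the $L_i$'s and all the $M_j$'s to $V\times_Y W$; the degree adds because $\op{dim}$ is additive for composition of smooth maps, which fixes the grading convention $i+j$. Next, proper pushforward $f_*$ along $f: X \to X'$ simply replaces $p$ by $f\circ p$ (still proper); smooth pushforward ${}_*g$ along $g: Y \to Y'$ replaces $s$ by $g\circ s$ (still smooth) and raises degree by $\op{dim} g$. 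Dually, smooth pullback $f^*$ along $f: X'\to X$ and proper pullback ${}^*g$ along $g: Y'\to Y$ are defined by base change, replacing $V$ by $X' \times_X V$ resp. $V \times_Y Y'$; here one uses that base change of a proper map along anything is proper, base change of a smooth map along anything is smooth, and $\op{dim}$ of the base-changed smooth map agrees, which again is what forces the degree shifts. In every case the line bundles are pulled back along the evident projection, and one checks compatibility with disjoint-union addition, so the operations pass to $\mathscr Z^*$; the units $1_X = [X \xleftarrow{\op{id}} X \xrightarrow{\op{id}} X;\ ]$ (empty set of line bundles) are immediate.

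The verification of ($A_1$)--($A_{123}$) is then a sequence of diagram chases. Associativity ($A_1$) follows from the canonical isomorphism $(V\times_Y W)\times_Z U \cong V\times_Y (W\times_Z U)$ together with the fact that pulling back line bundles is functorial. Functoriality of pushforward ($A_2$), ($A_3$) and of pullback, and the commutativity of proper with smooth variants ($A_2$)$'$, ($A_3$)$'$, are formal from functoriality of composition and of base change and from the Cartesian square relating a smooth map to a proper map. Properties ($A_{12}$), ($A_{13}$), ($A_{23}$) and the projection formula ($A_{123}$) each reduce to comparing two iterated fibre products built from the same data, using repeatedly that a square of the relevant shape is Cartesian and that line-bundle pullback commutes with all these operations; the projection formula in particular is the statement that pushing forward after taking a fibre product equals taking a fibre product after pushing forward, which holds because the relevant square is Cartesian. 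Parts (3)--(5) are then short: the unit laws ($1_X \bullet \alpha = \alpha$, $\beta\bullet 1_X=\beta$) follow because $X\times_X V \cong V$ canonically, PPPU/PPU are the assertions that $1_X$ interacts correctly with pushforward and pullback (direct from the definitions and the identity-square being Cartesian), and the Chern class operators $c_1(L)\bullet$ and $\bullet c_1(M)$ are defined by appending the pullback of $L$ (resp. $M$) to the list of line bundles of a bicycle, with the listed properties following from functoriality of pullback of line bundles.

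The main obstacle is not any single deep idea but the bookkeeping: one must fix orientation/ordering conventions for the line bundles (so that, e.g., $c_1(L)\bullet$ and $\bullet c_1(M)$ commute and the product is genuinely associative on the nose rather than up to reordering), and one must be careful about which base-change square is Cartesian and in which direction properness versus smoothness propagates — this is exactly where the asymmetric notation ${}_*g$ and ${}^*g$ is forced, since smooth pushforward and proper pullback go ``the wrong way'' relative to the usual $f_*$, $f^*$. Handling the degree shifts $\op{dim} f$, $\op{dim} g$ consistently when maps are composed (using additivity of relative dimension, and the convention for non-equidimensional smooth maps) is the other place where care is needed; everything else is a routine, if lengthy, catalogue of Cartesian-square manipulations parallel to the axioms of a Fulton--MacPherson bivariant theory.
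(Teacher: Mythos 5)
Your proposal is correct and follows essentially the same route as the paper: the product is defined via the fibre product $V\times_Y W$ with pulled-back line bundles, the pushforwards by post-composition, the pullbacks by base change (using stability of properness and smoothness under composition and base change, with the stated degree shifts), the unit is the identity correspondence, the Chern class operators append $p^*L$ resp. $s^*M$, and the nine axioms together with PPPU/PPU are verified by exactly the kind of Cartesian-square diagram chases you outline. The only cosmetic difference is that your worry about ordering conventions for the line bundles is already absorbed by the paper's definition of isomorphism of bicycles, which permits a permutation of the bundles.
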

We do not have a reasonable name for this naive theory $\mathscr Z^{*}(X, Y)$ satisfying those properties above, so in this paper we call it \emph{a ``bi-variant" theory}.
\begin{defn}
Let $\Cal   B, \Cal   B'$ be two bi-variant theories on a category $\Cal V$. A {\it Grothen- dieck transformation} $\ga : \Cal   B \to \Cal   B'$
is a collection of homomorphisms
$\Cal   B(X, Y) \to \Cal   B'(X,Y)$
for a pair $(X,Y)$ in the category $\Cal V$, which preserves the above three basic operations and the Chern class operator: 
\begin{enumerate}
\item $\ga (\alp \bullet_{\Cal   B} \be) = \ga (\alp) \bullet _{\Cal   B'} \ga (\be)$, 
\item $\ga(f_{*}\alp) = f_*\ga (\alp)$ and $\ga(\alp \, {}_*g) = \ga (\alp) \, {}_*g$ ,
\item $\ga (g^* \alp) = g^* \ga (\alp)$ and $\ga (\alp \, {}^*f) = \ga (\alp)\, {}^*f$
\item $\ga(c_1(L) \bullet \alp) = c_1(L) \bullet \ga(\alp)$ and $\ga(\alp \bullet c_1(M)) = \ga(\alp) \bullet c_1(M).$ 
\end{enumerate}
\end{defn}

We show the following theorem.
\begin{thm}
The above $\mathscr Z^*(-,-)$ is the universal one among bi-variant theories in the sense that given any bi-variant theory $\Cal  B^*(-,-)$, there exists a unique Grothendieck transformation
$$\gamma_{\Cal  B}: \Cal  Z^*(-,-) \to \Cal  B^*(-,-)$$
such that $\gamma_{\Cal  B}(\jeden_V) = 1_V \in \Cal B(V,V)$ for any variety $V$.
\end{thm}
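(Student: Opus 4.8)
The plan is to run the standard ``normal form'' argument for universal objects. The first step is to observe that, already inside $\mathscr Z^*$, every generator is built from the unit by applying the basic operations in a prescribed order: writing $\jeden_V = [V \xleftarrow{\op{id}} V \xrightarrow{\op{id}} V;\ ]\in\mathscr Z^0(V,V)$, one has
$$[X \xleftarrow p V \xrightarrow s Y; L_1, \dots, L_r] \;=\; p_*\Bigl({}_*s\bigl(c_1(L_1)\bullet \cdots \bullet c_1(L_r)\bullet \jeden_V\bigr)\Bigr)$$
in $\mathscr Z^*(X,Y)$. This is immediate from how the three operations act on bicycles: each $c_1(L_i)\bullet$ appends the line bundle $L_i$, the smooth pushforward ${}_*s$ composes the smooth leg with $s$, and the proper pushforward $p_*$ composes the proper leg with $p$. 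By $(A_2)'$ the order of $p_*$ and ${}_*s$ is irrelevant, and by the commutativity of the Chern class operators (Lemma \ref{ch-op}) so is the order of the $L_i$.

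Given an arbitrary bi-variant theory $\mathscr B^*$, I would then \emph{define} $\gamma_{\mathscr B}$ on a generator by transporting this normal form into $\mathscr B^*$,
$$\gamma_{\mathscr B}\bigl([X \xleftarrow p V \xrightarrow s Y; L_1,\dots,L_r]\bigr) \;:=\; p_*\Bigl({}_*s\bigl(c_1(L_1)\bullet\cdots\bullet c_1(L_r)\bullet 1_V\bigr)\Bigr)\in\mathscr B^*(X,Y),$$
using now the three operations and the unit $1_V$ of $\mathscr B^*$, and extend $\mathbb Z$-linearly over the generators. Assuming this is well defined (the next paragraph), uniqueness is then immediate: any Grothendieck transformation $\gamma$ with $\gamma(\jeden_V)=1_V$ must send the right-hand side of the first display to precisely this element, since $\gamma$ commutes with $p_*$, with ${}_*s$ and with each $c_1(L_i)\bullet$; hence $\gamma$ agrees with $\gamma_{\mathscr B}$ on all generators, and therefore everywhere. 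Applying the formula to $\jeden_V$ itself gives $\gamma_{\mathscr B}(\jeden_V)=1_V$.

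Since $\mathscr Z^*(X,Y)$ is the free abelian group on the isomorphism classes of cobordism bicycles modulo the additivity relation (\ref{add}), well-definedness of $\gamma_{\mathscr B}$ amounts to two things. Additivity of the defining formula reduces to the behaviour of the operations of $\mathscr B^*$ on disjoint sums, which is exactly what PPPU (Lemma \ref{pppu}) supplies, and independence of the ordering of the $L_i$ is again Lemma \ref{ch-op}. The substantial point is invariance under an isomorphism of bicycles: for an isomorphism $\phi:V\to V'$ carrying $(p,s,L_1,\dots,L_r)$ to $(p',s',L_1',\dots,L_r')$ one must show that $\phi_*$, ${}_*\phi$ and $\phi^*$ convert the normal form built from $(V,p,s,L_i)$ into the one built from $(V',p',s',L_i')$; this uses functoriality $(A_2)$, $(A_3)$, the commutations $(A_2)'$, $(A_3)'$, the compatibility of the Chern operators with pushforward and pullback in Lemma \ref{ch-op}, and the unit properties PPPU and PPU --- the key identity being $\phi_*\bigl({}_*\phi(1_V)\bigr)=1_{V'}$ for an isomorphism $\phi$, which in $\mathscr Z^*$ holds because the correspondence $V'\xleftarrow{\phi}V\xrightarrow{\phi}V'$ is isomorphic to the identity one.

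Finally I would check that $\gamma_{\mathscr B}$ is a Grothendieck transformation. Compatibility with proper and smooth pushforward, with smooth and proper pullback, and with the Chern class operators is checked on generators and reduces, via the normal form and the description of these operations on bicycles, to functoriality and the commutations $(A_2)$, $(A_2)'$, $(A_3)$, $(A_3)'$, $(A_{23})$ and Lemma \ref{ch-op} holding in $\mathscr B^*$. Compatibility with the product, $\gamma_{\mathscr B}(\alp\bullet\be)=\gamma_{\mathscr B}(\alp)\bullet\gamma_{\mathscr B}(\be)$, is the laborious one: writing $\alp$ and $\be$ as generators, $\alp\bullet\be$ is again a single generator --- the one supported on the fibre product of the two sources over their common intermediate variety, carrying the pulled-back line bundles --- and one transports the product of the two normal forms through the pushforwards and Chern operators using $(A_{12})$, $(A_{13})$, $(A_{23})$, $(A_{123})$ and Lemma \ref{ch-op} until it matches the normal form of $\alp\bullet\be$. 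I expect the real work to be concentrated in this product computation and in the isomorphism-invariance step above; everything else is bookkeeping, and the recurring point is that the axioms controlling the interplay of the abstract operations of $\mathscr B^*$ with its units (PPPU and PPU) are precisely what is needed to reproduce the geometric ``change of representative'' and fibre-product manipulations that are automatic in $\mathscr Z^*$.
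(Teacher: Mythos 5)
Your proposal follows essentially the same route as the paper's proof: the same normal form $[X \xleftarrow{p} V \xrightarrow{s} Y; L_1,\dots,L_r]=p_*\bigl(c_1(L_1)\bullet\cdots\bullet c_1(L_r)\bullet\jeden_V\bigr)\,{}_*s$, the same definition of $\gamma_{\mathscr B}$ by transporting this expression into $\mathscr B^*$ with $1_V$ in place of $\jeden_V$, the same verification of compatibility with product, pushforwards, pullbacks and Chern class operators via $(A_1)$--$(A_{123})$, Lemma \ref{ch-op} and Lemma \ref{pppu}, and the same immediate uniqueness argument. Two small caveats on your extra well-definedness discussion (a point the paper itself leaves implicit): PPPU is a statement about the fiber product of a smooth and a proper map over a common target, not about disjoint sums, so it does not by itself give compatibility with the additivity relation; and the key identity $\phi_*(1_V\,{}_*\phi)=1_{V'}$ for an isomorphism $\phi$ must be derived inside $\mathscr B^*$ from the axioms (e.g.\ from the projection formulas $(A_{123})$ and $(A_{23})$), rather than merely observed to hold in $\mathscr Z^*$, since it is the operations of $\mathscr B^*$ that appear in the defining formula.
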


\begin{rem} In \cite{An} T. Annala has succeeded in constructing what he calls \emph{the bivariant derived algebraic cobrodism $\Omega^*(X \xrightarrow f Y)$}, a bivariant theoretic analogue of Levine--Morel's algebraic cobordism $\Omega_*(X)$ (which the author has been trying to aim at)
, using the construction of Lowrey-Sch\"urg's derived algebraic cobordism $d\Omega_*(X)$ \cite{LS} in \emph{derived algebraic geometry} and the author's construction of the universal bivariant theory. Roughly speaking, in \cite{An} Annala considers the bivariant theory $\mathbb {OM}_{qusm}^{prop}(X \xrightarrow f Y)$ for the category of derived algebraic schemes in derived algebraic geometry, where $qusm$ refers to \emph{quai-smooth morphisms},  and furthermore imposes some relations $\Cal R(X \xrightarrow f Y)$ on $\mathbb {OM}_{qusm}^{prop}(X \xrightarrow f Y)$ to obtain its quotient $\mathbb {OM}_{qusm}^{pro}(X \xrightarrow f Y)/\Cal R(X \xrightarrow f Y)$
, which is the bivariant derived algebraic cobordism $\Omega^*(X \xrightarrow f Y)$.

The ``forget" map defined in (\ref{forget}) gives rise to the following canonical homomorphism 
$$\frak f: \mathbb {OM}^{prop}_{sm}(X \xrightarrow f Y) \to \Cal  Z^*(X,Y)$$
 defined by
$\frak f ([V \xrightarrow p Y; L_1, \cdots, L_r]):= [X \xleftarrow p V \xrightarrow {f \circ p} Y; L_1, \cdots, L_r].$
This ``forget" map is compatible with the bivariant product $\bullet$, the bivariant pushforward and the Chern class operator, but not necessarily with the pullback. 
As to correspondences, D. Gaitsgory and N. Rozenblyum study \emph{correspondences in derived algebraic geometry} intensively in their recent book \cite{GR1} (cf. \cite{GR2}) . For example they consider the category $Corr ({\bf C})_{vert, horiz}$ for a category $\bf C$ equipped with two classes of morphisms $vert$ and $horiz$ (both closed under composition) such that 
\begin{enumerate}
\item the objets of $Corr ({\bf C})_{vert, horiz}$ are the same as those of $\bf C$ and 
\item the morphisms are correspondences, i.e., a morphism from $c_0$ to $c_1$ is a correspondence (drawn as follows in \cite{GR1}):
$$\xymatrix{
c_{0,1}\ar[d]_f \ar[r]^g & c_0\\
c_1
}
$$
\end{enumerate}
where $f$ is $vert$ and $g$ is $horiz$. If we use our notation, $Corr ({\bf C})_{vert, horiz}$ can be denoted by
$Corr ({\bf C})^{horiz}_{vert}$ and the above diagram is $c_0 \xleftarrow g c_{0,1} \xrightarrow f c_1$.
So, in this way of thinking of two kinds of bivariant theories, it remains to see whether one could get a ``correspondence" version of Annala's bivariant derived algebraic cobordism, i.e., whether one could consider some reasonable relations $\Cal R(X, Y)$ on $\Cal  Z^{prop}_{qusm}(X,Y)$ which is a derived algebraic geometric version of $\Cal  Z^*(X,Y)$ with \emph{smooth morphism} being replaced by \emph{quasi-smooth morphism}, 
such that the following diagram commutes:
$$\xymatrix{
\mathbb {OM}_{qusm}^{prop}(X \xrightarrow f Y) \ar[d]_{\pi} \ar[r]^{\frak f}  & \Cal  Z^{prop}_{qusm}(X,Y) \ar[d]_{\pi}\\
\frac{\mathbb {OM}_{qusm}^{prop}(X \xrightarrow f Y)}{\Cal R(X \xrightarrow f Y)}=: \Omega^*(X \xrightarrow f Y)  \ar[r]_{\frak f} &  \Cal Z^{\bigstar}(X,Y) := \frac{\Cal  Z^{prop}_{qusm}(X,Y)}{\Cal R(X, Y)}.
}
$$
We hope to be able to treat this problem in a different paper. It would be nice that in derived algebraic geometry there is some relation between Annala's bivariant derived algebraic cobordism $\Omega^*(X \xrightarrow f Y)$ and Kasparov's bivariant $KK$-theory $KK(X,Y)$ via the ``forget"  map $\frak f: \Omega^*(X \xrightarrow f Y) \to KK(X,Y)$
for certain reasonable maps $f:X \to Y$.
\end{rem}

\section {Fulton--MacPherson's bivariant theory}\label{FM-BT}

We make a quick review of Fulton--MacPherson's bivariant theory \cite {FM} (also see \cite{Fulton-book}) (cf. a universal bivariant theory \cite{Yokura-obt, Yokura-obt2}). 

Let $\Cal V$ be a category which has a final object $pt$ and on which the fiber product or fiber square is well-defined. Also we consider a class of maps, called ``confined maps" (e.g., proper maps, projective maps, in algebraic geometry), which are \emph{closed under composition and base change and contain all the identity maps}, and a class of fiber squares, called ``independent squares" (or ``confined squares", e.g., ``Tor-independent" in algebraic geometry, a fiber square with some extra conditions required on morphisms of the square), which satisfy the following:

(i) if the two inside squares in  
$$\CD
X''@> {h'} >> X' @> {g'} >> X \\
@VV {f''}V @VV {f'}V @VV {f}V\\
Y''@>> {h} > Y' @>> {g} > Y \endCD
\quad \quad \qquad \text{or} \qquad \quad \quad 
\CD
X' @>> {h''} > X \\
@V {f'}VV @VV {f}V\\
Y' @>> {h'} > Y \\
@V {g'}VV @VV {g}V \\
Z'  @>> {h} > Z \endCD
$$
are independent, then the outside square is also independent,

(ii) any square of the following forms are independent:
$$
\xymatrix{X \ar[d]_{f} \ar[r]^{\op {id}_X}&  X \ar[d]^f & & X \ar[d]_{\op {id}_X} \ar[r]^f & Y \ar[d]^{\op {id}_Y} \\
Y \ar[r]_{\op {id}_X}  & Y && X \ar[r]_f & Y}
$$
where $f:X \to Y$ is \emph{any} morphism. 

A bivariant theory $\bB$ on a category $\Cal V$ with values in the category of graded abelian groups is an assignment to each morphism
$ X  \xrightarrow{f} Y$
in the category $\Cal V$ a graded abelian group (in most cases we ignore the grading )
$\bB(X  \xrightarrow{f} Y)$
which is equipped with the following three basic operations. The $i$-th component of $\bB(X  \xrightarrow{f} Y)$, $i \in \bZ$, is denoted by $\bB^i(X  \xrightarrow{f} Y)$.
\begin{enumerate}
\item {\bf Product}: For morphisms $f: X \to Y$ and $g: Y
\to Z$, the product operation
$$\bullet: \bB^i( X  \xrightarrow{f}  Y) \otimes \bB^j( Y  \xrightarrow{g}  Z) \to
\bB^{i+j}( X  \xrightarrow{gf}  Z)$$
is  defined.

\item {\bf Pushforward}: For morphisms $f: X \to Y$
and $g: Y \to Z$ with $f$ \emph {confined}, the pushforward operation
$$f_*: \bB^i( X  \xrightarrow{gf} Z) \to \bB^i( Y  \xrightarrow{g}  Z) $$
is  defined.

\item {\bf Pullback} : For an \emph{independent} square \qquad $\CD
X' @> g' >> X \\
@V f' VV @VV f V\\
Y' @>> g > Y, \endCD
$

the pullback operation
$$g^* : \bB^i( X  \xrightarrow{f} Y) \to \bB^i( X'  \xrightarrow{f'} Y') $$
is  defined.
\end{enumerate}

These three operations are required to satisfy the following seven compatibility 
axioms (\cite [Part I, \S 2.2]{FM}):

\begin{enumerate}
\item[($A_1$)] {\bf Product is associative}: for $X \xrightarrow f Y  \xrightarrow g Z \xrightarrow h  W$ with $\alp \in \bB(X \xrightarrow f Y),  \be \in \bB(Y \xrightarrow g Z), \ga \in \bB(Z \xrightarrow h W)$,
$$(\alp \bullet\be) \bullet \ga = \alp \bullet (\be \bullet \ga).$$
\item[($A_2$)] {\bf Pushforward is functorial} : for $X \xrightarrow f Y  \xrightarrow g Z \xrightarrow h  W$ with $f$ and $g$ confined and $\alp \in \bB(X \xrightarrow {h\circ g\circ f} W)$
$$(g\circ f)_* (\alp) = g_*(f_*(\alp)).$$
\item[($A_3$)] {\bf Pullback is functorial}: given independent squares
$$\CD
X''@> {h'} >> X' @> {g'} >> X \\
@VV {f''}V @VV {f'}V @VV {f}V\\
Y''@>> {h} > Y' @>> {g} > Y \endCD
$$
$$(g \circ h)^* = h^* \circ g^*.$$
\item[($A_{12}$)] {\bf Product and pushforward commute}: for $X \xrightarrow f Y  \xrightarrow g Z \xrightarrow h  W$ with $f$ confined and $\alp \in \bB(X \xrightarrow {g \circ f} Z),  \be \in \bB(Z \xrightarrow h W)$,
$$f_*(\alp \bullet\be)  = f_*(\alp) \bullet \be.$$
\item[($A_{13}$)] {\bf Product and pullback commute}: given independent squares
$$\CD
X' @> {h''} >> X \\
@V {f'}VV @VV {f}V\\
Y' @> {h'} >> Y \\
@V {g'}VV @VV {g}V \\
Z'  @>> {h} > Z \endCD
$$
with $\alp \in \bB(X \xrightarrow {f} Y),  \be \in \bB(Y \xrightarrow g Z)$,
$$h^*(\alp \bullet\be)  = {h'}^*(\alp) \bullet h^*(\be).$$
\item[($A_{23}$)] \label{push-pull}{\bf Pushforward and pullback commute}: given independent squares
$$\CD
X' @> {h''} >> X \\
@V {f'}VV @VV {f}V\\
Y' @> {h'} >> Y \\
@V {g'}VV @VV {g}V \\
Z'  @>> {h} > Z \endCD
$$
with $f$ confined and $\alp \in \bB(X \xrightarrow {g\circ f} Z)$,
$$f'_*(h^*(\alp))  = h^*(f_*(\alp)).$$
\item[($A_{123}$)] {\bf Projection formula}: given an independent square with $g$ confined and $\alp \in \bB(X \xrightarrow {f} Y),  \be \in \bB(Y' \xrightarrow {h \circ g} Z)$
$$\CD
X' @> {g'} >> X \\
@V {f'}VV @VV {f}V\\
Y' @>> {g} > Y @>> h >Z \\
\endCD
$$
and $\alp \in \bB(X \xrightarrow {f} Y),  \be \in \bB(Y' \xrightarrow {h \circ g} Z)$,
$$g'_*(g^*(\alp) \bullet \be)  = \alp \bullet g_*(\be).$$
\end{enumerate}

We also assume that $\bB$ has units:

\underline {Units}: $\bB$ has units, i.e., there is an element $1_X \in \bB^0( X  \xrightarrow{\op {id}_X} X)$ such that $\alp \bullet 1_X = \alp$ for all morphisms $W \to X$ and all $\alp \in \bB(W \to X)$, such that $1_X \bullet \beta = \beta $ for all morphisms $X \to Y$ and all $\beta \in \bB(X \to Y)$, and such that $g^*1_X = 1_{X'}$ for all $g: X' \to X$. 

\underline {Commutativity}\label{commutativity}: $\bB$ is called \emph{commutative} if whenever both
$$\CD
W @> {g'} >> X \\
@V {f'}VV @VV {f}V\\
Y @>> {g} > Z  \\
\endCD  
\quad \quad \text{and} \quad \quad 
\CD
W @> {f'} >> Y \\
@V {g'}VV @VV {g}V\\
X @>> {g} > Z \\
\endCD  
$$
are independent squares with $\alp \in \bB(X \xrightarrow f Z)$ and $\be \in \bB(Y \xrightarrow g Z)$,
$$g^*(\alp) \bullet \be = f^*(\be) \bullet \alp .$$
Let $\bB, \bB'$ be two bivariant theories on a category $\Cal V$. A {\it Grothendieck transformation} from $\bB$ to $\bB'$, $\ga : \bB \to \bB'$
is a collection of homomorphisms
$\bB(X \to Y) \to \bB'(X \to Y)$
for a morphism $X \to Y$ in the category $\Cal V$, which preserves the above three basic operations: 
\begin{enumerate}
\item $\ga (\alp \bullet_{\bB} \be) = \ga (\alp) \bullet _{\bB'} \ga (\be)$, 
\item $\ga(f_{*}\alp) = f_*\ga (\alp)$, and 
\item $\ga (g^* \alp) = g^* \ga (\alp)$. 
\end{enumerate}
A bivariant theory unifies both a covariant theory and a contravariant theory in the following sense:

$\bB_*(X):= \bB(X \to pt)$ becomes a covariant functor for {\it confined}  morphisms and 

$\bB^*(X) := \bB(X  \xrightarrow{id}  X)$ becomes a contravariant functor for {\it any} morphisms. 
\noindent
A Grothendieck transformation $\ga: \bB \to \bB'$ induces natural transformations $\ga_*: \bB_* \to \bB_*'$ and $\ga^*: \bB^* \to {\bB'}^*$.

\begin{defn}\label{grading}
As to the grading, $\bB_i(X):= \bB^{-i}(X  \to pt)$ and
$\bB^j(X):= \bB^j(X  \xrightarrow{id}  X)$.
\end{defn}
 
\begin{defn}\label{canonical}(\cite[Part I, \S 2.6.2 Definition]{FM}) Let $\Cal S$ be a class of maps in $\Cal V$, which is closed under compositions and containing all identity maps. Suppose that to each $f: X \to Y$ in $\Cal S$ there is assigned an element
$\theta(f) \in \bB(X  \xrightarrow {f} Y)$ satisfying that
\begin{enumerate}
\item [(i)] $\theta (g \circ f) = \theta(f) \bullet \theta(g)$ for all $f:X \to Y$, $g: Y \to Z \in \Cal S$ and
\item [(ii)] $\theta(\op {id}_X) = 1_X $ for all $X$ with $1_X \in \bB^*(X):= \bB^*(X  \xrightarrow{\op {id}_X} X)$ the unit.
\end{enumerate}
Then $\theta(f)$ is called a {\it orientation} of $f$. (In \cite[Part I, \S 2.6.2 Definition]{FM} it is called a {\it canonical orientation} of $f$, but in this paper it shall be simply called an orientation.)
\end{defn} 

\begin{defn} Let $\Cal S$ be another class of maps called ``specialized maps" (e.g., smooth maps in algebraic geometry) in $\Cal V$ , which is closed under composition, closed under base change and containing all identity maps. Let $\bB$ be a  
bivariant theory. If $\Cal S$ has  an orientation $\theta$ for $\bB$ and it satisfies 
that for an independent square with $f \in \Cal S$
$$
\CD
X' @> g' >> X\\
@Vf'VV   @VV f V \\
Y' @>> g > Y
\endCD
$$
the following condition holds: 
$\theta (f') = g^* \theta (f)$, 
(which means that the orientation $\theta$ is preserved by  the pullback operation), then we call $\theta$ a {\it stable orientation} and say that $\Cal S$ is {\it stably $\bB$-oriented}.
\end{defn}

\section{Oriented bivariant theory and a universal oriented bivariant theory }

Levine--Morel's algebraic cobordism is the universal one among the so-called \emph {oriented} Borel--Moore functors with products for algebraic schemes. Here ``oriented" means that the given Borel--Moore functor $H_*$ is equipped with the endomorphism $\tilde c_1(L): H_*(X) \to H_*(X)$ for a line bundle $L$ over the scheme $X$. Motivated by this ``orientation" (which is different from the one given in Definition \ref{canonical}, but we still call this ``orientation" using a different symbol so that the reader will not be confused with terminologies), in \cite[\S4]{Yokura-obt} we introduce an orientation to bivariant theories for any category, using the notion of \emph {fibered categories} in abstract category theory (e.g, see \cite{Vistoli}) and such a bivariant theory equipped with such an orientation (Chern class operator) is called \emph{an oriented bivariant theory}. 

\begin{defn}(\cite[Definition 4.2]{Yokura-obt}) (an oriented bivariant theory) \label{orientation} Let $\bB$ be a bivariant theory on a category $\Cal V$.

\begin{enumerate}
\item For a fiber-object $L$ over $X$, the \emph{``operator" on $\bB$ associated to $L$}, denoted by $\phi(L)$, is defined to be an endomorphism
$$\phi(L): \bB(X  \xrightarrow{f}  Y) \to \bB(X  \xrightarrow{f}  Y) $$
which satisfies the following properties:

(O-1) {\bf identity}: If $L$ and $L'$ are two fiber-objects over $X$ and isomorphic (i.e., if $f:L\to X$ and $f':L' \to X$, then there exists an isomorphism $i: L\to L'$ such that $f = f' \circ i$) , then we have
$$\phi(L) = \phi(L'): \bB(X  \xrightarrow{f}  Y) \to \bB(X  \xrightarrow{f}  Y).$$

(O-2)  {\bf commutativity}: Let $L$ and $L'$ be two fiber-objects  over $X$, then we have
$$\phi(L) \circ \phi(L') = \phi(L') \circ \phi(L) :\bB(X  \xrightarrow{f}  Y) \to \bB(X  \xrightarrow{f}  Y). $$

(O-3)   {\bf compatibility with product}: For morphisms $f:X \to Y$ and $g:Y \to Z$,  $\alp \in \bB(X  \xrightarrow{f} Y)$ and $ \be \in \bB(Y  \xrightarrow{g} Z)$,  a fiber-object  $L$ over $X$ and a fiber-object  $M$ over $Y$, we have
 $$ \phi(L) (\alp \bullet \be) = \phi(L)(\alp) \bullet \be, \quad  \phi(f^*M) (\alp \bullet \be) = \alp \bullet \phi(M)(\be).$$

(O-4)  {\bf compatibility with pushforward}: For a confined morphism $f:X \to Y$ and a fiber-object $M$ over $Y$ we have 
$$ f_*\left (\phi(f^*M)(\alp) \right ) = \phi(M)(f_*\alp).$$

(O-5)   {\bf compatibility with pullback}: For an independent square and a fiber-object  $L$ over $X$
$$
\CD
X' @> g' >> X \\
@V f' VV @VV f V\\
Y' @>> g > Y 
 \endCD
$$
we have 
$$g^*\left (\phi(L)(\alp) \right ) = \phi({g'}^*L)(g^*\alp).$$

The above operator is called an ``{\it orientation}" and a bivariant theory equipped with such an orientation is called an {\it oriented bivariant theory}, denoted by $\bOB$. 
\item An {\it oriented Grothendieck transformation} between two oriented bivariant theories is a Grothendieck transformation which preserves or is compatible with the operator, i.e., for two oriented bivariant theories $\bOB$ with an orientation $\phi$ and $\bOB'$ with an orientation $\phi'$ the following diagram commutes
$$
\CD
\bOB (X  \xrightarrow{f}  Y)  @> {\phi(L)}>> \bOB (X  \xrightarrow{f}  Y) \\
@V \ga VV @VV \ga V\\
\bOB' (X  \xrightarrow{f}  Y) @>>{\phi'(L)} > \bOB' (X  \xrightarrow{f}  Y).
 \endCD
$$
\end{enumerate}
\end{defn} 

\begin{thm} (\cite[Theorem 4.6]{Yokura-obt}) \label{obt}(A universal oriented bivariant theory)
 Let  $\Cal V$ be a category with a class $\Cal C$ of confined morphisms, a class of independent squares, a class  $\Cal S$ of specialized morphisms and $\Cal L$ a fibered category over $\Cal V$.  We define 
$$\bOM^{\Cal C} _{\Cal S}(X  \xrightarrow{f}  Y)$$
to be the free abelian group generated by the set of isomorphism classes of cobordism cycles over $X$
$$[V  \xrightarrow{h} X; L_1, L_2, \cdots, L_r]$$
such that $h \in \Cal  C$, $f \circ h: W \to Y \in \Cal S$ and $L_i$ a fiber-object over $V$.
\begin{enumerate}
\item The association $\bOM^{\Cal C} _{\Cal S}$ becomes an oriented bivariant theory if the four operations are defined as follows:
\begin{enumerate}

\item  {\bf Orientation $\Phi$}: For a morphism $f:X \to Y$ and a fiber-object $L$ over $X$, the operator
$$\Phi (L):\bOM^{\Cal C} _{\Cal S} ( X  \xrightarrow{f}  Y) \to \bOM^{\Cal C} _{\Cal S} ( X  \xrightarrow{f}  Y) $$
is defined by
$$\Phi(L)([V  \xrightarrow{h} X; L_1, L_2, \cdots, L_r]):=[V  \xrightarrow{h} X; L_1, L_2, \cdots, L_r, h^*L].$$
and extended linearly.

\item {\bf Product}: For morphisms $f: X \to Y$ and $g: Y
\to Z$, the product operation
$$\bullet: \bOM^{\Cal C} _{\Cal S} ( X  \xrightarrow{f}  Y) \otimes \bOM^{\Cal C} _{\Cal S} ( Y  \xrightarrow{g}  Z) \to
\bOM^{\Cal C} _{\Cal S} ( X  \xrightarrow{gf}  Z)$$
is  defined as follows: The product is defined by
\begin{align*}
& [V  \xrightarrow{h} X; L_1, \cdots, L_r]  \bullet [W  \xrightarrow{k} Y; M_1, \cdots, M_s] \\
& :=  [V'  \xrightarrow{h \circ {k}''}  X; {{k}''}^*L_1, \cdots,{{k}''}^*L_r, (f' \circ {h}')^*M_1, \cdots, (f' \circ {h}')^*M_s ]\
\end{align*}
and extended bilinearly. Here we consider the following fiber squares
$$\CD
V' @> {h'} >> X' @> {f'} >> W \\
@V {{k}''}VV @V {{k}'}VV @V {k}VV\\
V@>> {h} > X @>> {f} > Y @>> {g} > Z .\endCD
$$
\item {\bf Pushforward}: For morphisms $f: X \to Y$
and $g: Y \to Z$ with $f$ confined, the pushforward operation
$$f_*: \bOM^{\Cal C} _{\Cal S} ( X  \xrightarrow{gf} Z) \to \bOM^{\Cal C} _{\Cal S} ( Y  \xrightarrow{g}  Z) $$
is  defined by
$$f_*\left ([V  \xrightarrow{h} X; L_1, \cdots, L_r]  \right) := [V  \xrightarrow{f \circ h} Y; L_1, \cdots, L_r]$$
and extended linearly.

\item {\bf Pullback}: For an independent square
$$\CD
X' @> g' >> X \\
@V f' VV @VV f V\\
Y' @>> g > Y, \endCD
$$
the pullback operation
$$g^*: \bOM^{\Cal C} _{\Cal S} ( X  \xrightarrow{f} Y) \to \bOM^{\Cal C} _{\Cal S}( X'  \xrightarrow{f'} Y') $$
is  defined by
$$g^*\left ([V  \xrightarrow{h} X; L_1, \cdots, L_r] \right):=  [V'  \xrightarrow{{h}'}  X'; {g''}^*L_1, \cdots, {g''}^*L_r]$$
and extended linearly, where we consider the following fiber squares:
$$\CD
V' @> g'' >> V \\
@V {h'} VV @VV {h}V\\
X' @> g' >> X \\
@V f' VV @VV f V\\
Y' @>> g > Y. \endCD
$$
\end{enumerate}

\item Let $\Cal {OBT}$ be a class of oriented bivariant theories $\bOB$ on the same category $\Cal V$ with a class $\Cal C$ of confined morphisms, a class of independent squares, a class $\Cal S$ of specialized morphisms and a fibered category $\Cal L$ over $\Cal V$. Let $\Cal S$ be stably $\bOB$-oriented for any oriented bivariant theory $\bOB \in \Cal {OBT}$. Then, for each oriented bivariant theory $\bOB \in \Cal {OBT}$ with an orientation $\phi$  there exists a unique oriented Grothendieck transformation
$$\ga_{\bOB} : \bOM^{\Cal C} _{\Cal S} \to \bOB$$
such that for any $f: X \to Y \in \Cal S$ the homomorphism
$\ga_{\bOB} : \bOM^{\Cal C} _{\Cal S}(X  \xrightarrow{f}  Y) \to \bOB(X  \xrightarrow{f}  Y)$
satisfies the normalization condition that $$\ga_{\bOB}([X  \xrightarrow{\op {id}_X}  X; L_1, \cdots, L_r]) = \phi(L_1) \circ \cdots \circ \phi(L_r) (\theta_{\bOB}(f)).$$
\end{enumerate}
\end{thm}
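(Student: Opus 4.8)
The plan is to prove the two claims separately: (1) that $\bOM^{\Cal C}_{\Cal S}$ equipped with the four displayed operations is an oriented bivariant theory, and (2) that it is universal. Claim (1) is a lengthy but essentially mechanical axiom check, and the substance lies in (2).

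For (1), the first point is well-definedness on isomorphism classes: the product and the pullback each involve a choice of fiber square (or of a stack of two), but any two admissible choices are canonically isomorphic over the relevant base, which induces canonical isomorphisms of the pulled-back fiber-objects, so the resulting isomorphism classes do not depend on the choice; the pushforward involves no choice. Next come the seven bivariant axioms: $(A_2)$ and $(A_{12})$ are immediate from the definitions; $(A_1)$, $(A_3)$, $(A_{13})$ follow by expanding the iterated diagram of fiber squares occurring in the definitions of $\bullet$ and of $g^*$ and invoking the functoriality $g^*(h^*L)\cong(h\circ g)^*L$ of the pull-back of fiber-objects; $(A_{23})$ and the projection formula $(A_{123})$ need the more delicate bookkeeping of a two- or three-layer array of fiber squares but are otherwise the same kind of diagram chase. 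The unit is $1_X:=[X\xrightarrow{\op{id}_X}X]$ with empty list, and $g^*1_X=1_{X'}$ is clear. Finally the orientation axioms (O-1)--(O-5) for $\Phi$ are all straightforward, since $\Phi(L)$ merely appends $h^*L$ to the list: (O-1) is exactly the passage to isomorphism classes, (O-2) is the commutativity of appending, and (O-3)--(O-5) result from comparing appended lists after applying $\bullet$, $f_*$, and $g^*$.

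Turning to universality, uniqueness of $\ga_{\bOB}$ comes first and forces the construction. Every generator factors as
$$[V\xrightarrow{h}X;L_1,\dots,L_r]=h_*\bigl(\Phi(L_1)\circ\cdots\circ\Phi(L_r)([V\xrightarrow{\op{id}_V}V])\bigr),$$
where $[V\xrightarrow{\op{id}_V}V]\in\bOM^{\Cal C}_{\Cal S}(V\xrightarrow{f\circ h}Y)$ is legitimate because $h\in\Cal C$ and $f\circ h\in\Cal S$, and $\op{id}_V^*L_i=L_i$. Since an oriented Grothendieck transformation commutes with $h_*$ and with the orientation operators, and since the normalization condition with the empty list forces $\ga_{\bOB}([V\xrightarrow{\op{id}_V}V])=\theta_{\bOB}(f\circ h)$, we are compelled to set
$$\ga_{\bOB}\bigl([V\xrightarrow{h}X;L_1,\dots,L_r]\bigr):=h_*\bigl(\phi(L_1)\circ\cdots\circ\phi(L_r)(\theta_{\bOB}(f\circ h))\bigr)$$
and extend $\bZ$-linearly over the free abelian group, which is plainly well defined on isomorphism classes.

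It remains to check that this $\ga_{\bOB}$ really is an oriented Grothendieck transformation. Compatibility with pushforward uses only $(A_2)$; compatibility with the orientation (i.e.\ $\Phi\mapsto\phi$) uses (O-4) to move $h_*$ through $\phi$, together with (O-2); compatibility with pullback uses $(A_{23})$, then (O-5), then the stability identity $g^*\theta_{\bOB}(f\circ h)=\theta_{\bOB}\bigl((f\circ h)'\bigr)$ for the base-changed map (valid since $\Cal S$ is closed under base change). The main obstacle is compatibility with the product, namely
$$\ga_{\bOB}\bigl([V\xrightarrow{h}X;L_\bullet]\bigr)\bullet\ga_{\bOB}\bigl([W\xrightarrow{k}Y;M_\bullet]\bigr)=\ga_{\bOB}\bigl([V\xrightarrow{h}X;L_\bullet]\bullet[W\xrightarrow{k}Y;M_\bullet]\bigr),$$
which I would prove by the following sequence of moves, referring to the fiber squares in the definition of $\bullet$: use $(A_{12})$ to pull the confined pushforward $h_*$ out of the product; apply the projection formula $(A_{123})$ to the confined base change $k''$ of $k$ to rewrite the inner term $\alp\bullet k_*(\be)$ as $k''_*(\cdots)$; use (O-5) and the stability of $\theta_{\bOB}$ to identify the $k''$-pullback of $\phi(L_\bullet)(\theta_{\bOB}(f\circ h))$ with $\phi((k'')^*L_\bullet)\bigl(\theta_{\bOB}(f'\circ h')\bigr)$; use (O-3) to pull all the $\phi$'s out of the remaining product; use the multiplicativity $\theta_{\bOB}(\be\circ\alp)=\theta_{\bOB}(\alp)\bullet\theta_{\bOB}(\be)$ of the orientation together with the identity of composites $g\circ f\circ h\circ k''=(g\circ k)\circ(f'\circ h')$ read off from the fiber squares; and finally use $(A_2)$ to recombine $h_*\circ k''_*=(h\circ k'')_*$. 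Comparing the outcome with $\ga_{\bOB}$ evaluated on the product generator $[V'\xrightarrow{h\circ k''}X;(k'')^*L_\bullet,(f'\circ h')^*M_\bullet]$ closes the argument. The only genuine subtlety running through all of this is keeping track of which fiber-object lies over which scheme and checking, at each step, that the morphisms produced still belong to $\Cal C$ or to $\Cal S$ — which is exactly why one needs both classes to be closed under composition and base change.
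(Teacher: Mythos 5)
Your proposal is correct and takes essentially the same route as the source: the paper itself only quotes this theorem from \cite{Yokura-obt}, but both the original proof there and the paper's own proof of the analogous universality theorem for $\Cal Z^*(-,-)$ in Section 5 proceed exactly as you do --- decompose each generator as a confined pushforward of orientation operators applied to the canonical class (unit/orientation), let the normalization plus compatibility with $h_*$ and $\phi$ force the formula $\ga_{\bOB}([V \xrightarrow{h} X; L_1,\dots,L_r]) = h_*\bigl(\phi(L_1)\circ\cdots\circ\phi(L_r)(\theta_{\bOB}(f\circ h))\bigr)$, and then verify the four compatibilities using $(A_2)$, $(A_{12})$, $(A_{23})$, $(A_{123})$, (O-2)--(O-5), and the stability and multiplicativity of $\theta_{\bOB}$. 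In particular, your handling of the product --- pulling $h_*$ out by $(A_{12})$, applying the projection formula to the confined base change $k''$ of $k$, identifying $k^*\bigl(\phi(L_\bullet)\theta_{\bOB}(f\circ h)\bigr)$ with $\phi({k''}^*L_\bullet)\theta_{\bOB}(f'\circ h')$ via (O-5) and stability, and concluding with $\theta_{\bOB}(f'\circ h')\bullet\theta_{\bOB}(g\circ k)=\theta_{\bOB}(g\circ f\circ h\circ k'')$ --- is exactly the computation required.
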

In this paper we consider the category $\Cal V$ of complex algebraic varieties or schemes and we consider proper morphisms for the class $\Cal C$ of confined maps, smooth morphisms for the class $\Cal S$ of specialized morphisms, fiber squares for independent squares, and line bundles for a fibered category $\Cal L$ over $\Cal V$. So,  $\bOM^{\Cal C} _{\Cal S}(X  \xrightarrow{f}  Y)$ shall be denote by $\bOM^{prop} _{sm}(X  \xrightarrow{f}  Y).$ For a smooth morphism $f:X \to Y$
$$\theta (f):= [X \xrightarrow {\op{id}_X} X] \in \bOM^{prop} _{sm}(X  \xrightarrow{f}  Y) $$
is clearly a stable orientation. As mentioned in the introduction, we can consider the above free abelian group $\bOM^{prop} _{sm}(X  \xrightarrow{f}  Y)$ modulo the additive relation (\ref{add}), i.e., the Grothendieck group of the monoid of the isomorphism classes of cobordism bicycles of finite sets of line bundles, which is denoted by the same notation $\bOM^{prop} _{sm}(X  \xrightarrow{f}  Y)$.
\section{Cobordism bicycles of vector bundles}
In this section we consider extending the notion of algebraic cobordism of vector bundles due to Y.-P. Lee and R. Pandharipande \cite{LeeP} (cf. \cite{LTY}) to correspondences.

\begin{defn} Let $X \xleftarrow p V \xrightarrow s Y$ be a correspondence (sometimes called a \emph{span} or a \emph{roof}) such that $p:V \to X$ is a \emph{proper} map and $s: V \to Y$ be a \emph{smooth} map, and let $E$ be a complex vector bundle. Then
$$(X \xleftarrow p V \xrightarrow s Y; E)$$
is called a \emph{cobordism bicycle of a vector bundle}.
\end{defn}
\begin{rem} 
The above correspondence $X \xleftarrow p V \xrightarrow s Y$ shall be called a proper-smooth correspondence, abusing words.
Mimicking naming used in \cite{BB}, \cite{LM} and \cite{LeeP}, the above proper-smooth correspondence equipped with a vector bundle is simply also named ``cobordism bicycle of a vector bundle.
\end{rem}
\begin{defn}\label{bicycle} Let $(X \xleftarrow p V \xrightarrow s Y; E)$ and $(X \xleftarrow {p'} V' \xrightarrow {s'} Y; E')$ be two cobordism bicycles of vector bundles of the same rank. If there exists an isomorphism $h: V \cong V'$ such that 
\begin{enumerate}
\item $(X \xleftarrow p V \xrightarrow s Y) \cong (X \xleftarrow {p'} V' \xrightarrow {s'} Y)$ as correspondences, i.e., the following diagrams commute:
$$\xymatrix{
& V\ar [dl]_{p} \ar[dr]^{s} \ar[dd]_h &\\
X   && Y \\
 & V'\ar[ul]^{p'} \ar[ur]_{s'}&}
$$

\item $E \cong h^*E'$,
\end{enumerate}
then they are called isomorphic and denoted by
$$(X \xleftarrow p V \xrightarrow s Y; E) \cong (X \xleftarrow {p'} V' \xrightarrow {s'} Y; E').$$
\end{defn}
The isomorphism class of a cobordism bicycle of a vector bundle $(X \xleftarrow p V \xrightarrow s Y; E)$ is denoted by $[X \xleftarrow p V \xrightarrow s Y; E]$, which is still called a cobordism bicycle of a vector bundle.  For a fixed rank $r$ for vector bundles, the set of isomorphism classes of cobordism bicycles of vector bundles for a pair $(X,Y)$  becomes a commutative monoid by the disjoint sum:
\begin{align*}
[X \xleftarrow {p_1} V_1 \xrightarrow {s_1} Y; E_1] + [X \xleftarrow {p_2} & V_2 \xrightarrow {s_2} Y; E_2]\\
& := [X \xleftarrow {p_1+p_2} V_1 \sqcup V_2 \xrightarrow {s_1+s_2} Y; E_1 + E_2],
\end{align*}
where $E_1 + E_2$ is a vector bundle such that $(E_1 + E_2)|_{V_1} =E_1$ and $(E_1 + E_2)|_{V_2} =E_2$.
This monoid is denoted by $\Cal M_r(X,Y)$ and another grading of $[X \xleftarrow p V \xrightarrow s Y; E]$ is defined by the relative dimension of the smooth map $s$, denoted by $\op{dim} s$, thus by double grading, $[X \xleftarrow p V \xrightarrow s Y; E] \in \Cal   M_{n,r}(X,Y)$ means that $n = \op{dim} s$ and $r = \op{rank} E$.
The group completion of this monoid, i.e., the Grothendieck group, is denoted by $\Cal   M_{n,r}(X,Y)^+$. We use this notation, mimicking \cite{LM}.
\begin{rem} For a fixed rank $r$, $\Cal   M_{*, r}(X,Y)^+ = \bigoplus \Cal   M_{n,r}(X,Y)^+$ is  a graded abelian group.
\end{rem}
\begin{rem} When $Y=pt$ a point, $\Cal   M_{n,r}(X,pt)^+$ is nothing but $\Cal   M_{n,r}(X)^+$ considered in Lee--Pandharipande \cite{LeeP}. In this sense, when $X = pt$ a point, $\Cal   M_{n,r}(pt,Y)^+$ is a new object to be investigated.
\end{rem}

\begin{defn}[product of cobordism bicycles]\label{prod} For three varieties $X,Y,Z$, we define the following two kinds of product $\bullet_{\oplus}$ and $\bullet_{\otimes}$:
\begin{enumerate}
\item (by the Whitney sum $\oplus$)
$$\bullet_{\oplus} : \Cal   M_{m,r}(X,Y)^+ \otimes \Cal   M_{n,k}(Y,Z)^+ \to \Cal   M_{m+n,r+k}(X,Z)^+; $$
\begin{align*}
[X \xleftarrow p  V \xrightarrow s Y; E] \bullet_{\oplus} [Y \xleftarrow q & W \xrightarrow t Z; F]\\
& : = [(X \xleftarrow p V \xrightarrow s Y) \circ (Y \xleftarrow q W \xrightarrow s Z);\widetilde{q}^*E \oplus \widetilde{s}^*F ],
\end{align*}
\item (by the tensor product $\otimes$)
$$\bullet_{\otimes} : \Cal   M_{m,r}(X,Y)^+ \otimes \Cal   M_{n,k}(Y,Z)^+ \to \Cal   M_{m+n,rk}(X,Z)^+;$$
\begin{align*}
[X \xleftarrow p  V \xrightarrow s Y; E] \bullet_{\otimes} [Y \xleftarrow q  & W \xrightarrow t Z; F]\\
& : = [(X \xleftarrow p V \xrightarrow s Y) \circ (Y \xleftarrow q W \xrightarrow s Z);\widetilde{q}^*E \otimes \widetilde{s}^*F ].
\end{align*}
\end{enumerate}
Here we consider the following commutative diagram
$$\xymatrix{
& & \widetilde{q}^*E \oplus \widetilde{s}^*F \quad \text{or} \quad \widetilde{q}^*E \otimes  \widetilde{s}^*F \ar[d] \\
& E \ar[d]& V\times_Y W\ar [dl]_{\widetilde{q}} \ar[dr]^{\widetilde{s}} & F \ar[d]&\\
& V \ar [dl]_{p} \ar [dr]^{s} && W \ar [dl]_{q} \ar[dr]^{t}\\
X & &  Y && Z }
$$
\end{defn}

\begin{lem}
The products $\bullet_{\oplus}$ and $\bullet_{\otimes}$ are both bilinear.
\end{lem}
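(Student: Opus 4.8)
The plan is to verify bilinearity of each product directly from the definition, reducing everything to the additivity relation for the monoids $\Cal M_{n,r}(X,Y)^+$ and the behaviour of fiber products under disjoint unions. Since $\bullet_\oplus$ and $\bullet_\otimes$ are defined on generators (isomorphism classes of cobordism bicycles) and then extended, the only thing to check is that the defining formula respects the additivity relation
$$[X \xleftarrow {p_1} V_1 \xrightarrow {s_1} Y; E_1] + [X \xleftarrow {p_2} V_2 \xrightarrow {s_2} Y; E_2] = [X \xleftarrow {p_1+p_2} V_1 \sqcup V_2 \xrightarrow {s_1+s_2} Y; E_1 + E_2]$$
in each variable separately; bilinearity over $\bZ$ then follows since the groups in question are group completions of these monoids.

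First I would fix the second argument $[Y \xleftarrow q W \xrightarrow t Z; F]$ and expand $\bigl([X \xleftarrow {p_1+p_2} V_1 \sqcup V_2 \xrightarrow {s_1+s_2} Y; E_1+E_2]\bigr) \bullet_\oplus [Y \xleftarrow q W \xrightarrow t Z; F]$. The key observation is that the fiber product is compatible with disjoint unions: $(V_1 \sqcup V_2) \times_Y W \cong (V_1 \times_Y W) \sqcup (V_2 \times_Y W)$, with the induced proper map to $X$ being $p_1\circ \widetilde q_1 \sqcup p_2 \circ \widetilde q_2$ and the induced smooth map to $Z$ being $t\circ\widetilde s_1 \sqcup t \circ \widetilde s_2$. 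Moreover the pulled-back bundle $\widetilde q^*(E_1+E_2)$ restricts on $V_i \times_Y W$ to $\widetilde q_i^* E_i$, and $\widetilde s^* F$ restricts to $\widetilde s_i^* F$, so $\widetilde q^*(E_1+E_2)\oplus \widetilde s^*F$ restricts to $\widetilde q_i^*E_i \oplus \widetilde s_i^*F$ on each piece. Hence the Whitney-sum product of the sum equals the sum of the products, which is linearity in the first variable. Linearity in the second variable is entirely symmetric, using $V \times_Y (W_1 \sqcup W_2) \cong (V\times_Y W_1)\sqcup (V \times_Y W_2)$. The argument for $\bullet_\otimes$ is identical, with $\oplus$ replaced by $\otimes$ throughout; note the rank bookkeeping $(r+k)$ versus $(rk)$ is already built into the target monoids in Definition \ref{prod}, so no extra checking is needed there.

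I would also record that these identifications are isomorphisms of cobordism bicycles in the precise sense of Definition \ref{bicycle}: one must exhibit the canonical isomorphism $h$ of sources commuting with both structure maps and identify the bundles up to $h^*$, which is immediate from the universal property of the fiber product. Finally, one extends from the monoid to its group completion: a $\bZ$-bilinear map on $\Cal M_{*,*}(X,Y)^+ \otimes \Cal M_{*,*}(Y,Z)^+$ is determined by an additive-in-each-variable map on the monoids, so the above suffices. The main (and only) obstacle is purely bookkeeping: keeping the two projections from the fiber product, the two pulled-back bundles, and the disjoint-union decompositions consistently labelled, so that the restriction identities are manifestly correct; there is no conceptual difficulty, since everything rests on the elementary fact that fiber products commute with coproducts in the category of varieties and that pullback of bundles is compatible with this.
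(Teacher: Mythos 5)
Your argument is correct: the paper in fact states this lemma without proof, treating it as immediate, and your verification (fiber products commute with disjoint unions, pulled-back bundles restrict compatibly on each piece, then pass to the group completion) is exactly the routine check being taken for granted. Nothing is missing.
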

\begin{rem} $\Cal   M_{*,*}(X,X)^+$ is a double graded commutative ring with respect to both products $\bullet_{\oplus}$ and $\bullet_{\otimes}$.
\end{rem}
\begin{rem} We consider the above product $\bullet_{\oplus}$ for $Y = Z=pt$ a point. Since $\Cal   M_{n,r}(X,pt)^+ = \Cal   M_{n,r}(X)^+$ and $\Cal   M_{n,r}(pt,pt)^+ = \Cal   M_{n,r}(pt)^+$, we have
$$\bullet_{\oplus}: \Cal   M_{m,r}(X)^+ \otimes \Cal   M_{n,k}(pt)^+ \to \Cal   M_{m+n,r+k}(X)^+.$$
\begin{align*}[X \xleftarrow p  V \xrightarrow s pt; E] & \bullet_{\oplus} [pt \xleftarrow q W \xrightarrow t pt; F]\\
&= [(X \xleftarrow p V \xrightarrow s pt) \circ (pt \xleftarrow q W \xrightarrow s pt);(pr_1)^*E \oplus (pr_2)^*F ],
\end{align*}
which is rewritten as follows, by using the notations used in \cite{LeeP}:
$$[V \xrightarrow p X, E] \bullet_{\oplus} [W; F]= [V \times W \xrightarrow {p \circ pr_1} X; (pr_1)^*E \oplus (pr_2)^*F].$$
$$\xymatrix{
& & (pr_1)^*E \oplus (pr_2)^*F \ar[d] \\
& E \ar[d]& V\times W\ar [dl]_{pr_1} \ar[dr]^{pr_2} & F \ar[d]&\\
& V \ar [dl]_{p} \ar [dr]^{s} && W \ar [dl]_{q} \ar[dr]^{t}\\
X & &  pt && pt }
$$

In \cite[\S 0.8]{LeeP} they define the following map (which turns out to be isomorphic \cite[Theorem 3]{LeeP})
$$\gamma_X: \omega_*(X) \otimes_{\omega_*(pt)} \omega_{*,r}(pt) \to \omega_{*,r}(X)$$
of $\omega_*(pt)$-modules by
\begin{align*}
\gamma_X \Bigl ( [Y \xrightarrow f X] \otimes [\mathbb P^{\lambda}, \Cal  O^{r- \ell(\lambda)} \oplus & \bigoplus_{m \in \lambda} L_m] \Bigr ) \\
&:= [Y \times \mathbb P^{\lambda} \xrightarrow {f \circ p_Y} X, \Cal  O^{r- \ell(\lambda)} \oplus \bigoplus_{m \in \lambda} p^*_{\mathbb P^{\lambda}}L_m].
\end{align*}
In fact this map is nothing but our product $\bullet_{\oplus}$ at least at the level of $\Cal   M_{*,*}$: 
$$\bullet_{\oplus}: \Cal   M_{*,0}(X,pt) \otimes \Cal   M_{*,r}(pt,pt) \to  \Cal   M_{*,r}(X,pt).$$ 
Note that $\Cal   M_{*,0}(X,pt) =\Cal   M_{*,0}(X)$,  $\Cal   M_{*,r}(pt,pt)=\Cal   M_{*,r}(pt)$ and $\Cal   M_{*,r}(X,pt) = \Cal   M_{*,r}(X).$
Using our notation, we have
\begin{itemize}
\item $[Y \xrightarrow f X]=[X \xleftarrow {f} Y \xrightarrow {s} pt]$,
\item $[\mathbb P^{\lambda}, \Cal  O^{r- \ell(\lambda)} \oplus \bigoplus_{m \in \lambda} L_m] = [pt \xleftarrow {} \mathbb P^{\lambda} \xrightarrow {} pt; \Cal  O^{r- \ell(\lambda)} \oplus \bigoplus_{m \in \lambda} L_m]$,
\item $[Y \times \mathbb P^{\lambda} \xrightarrow {f \circ p_Y} X, \Cal  O^{r- \ell(\lambda)} \oplus \bigoplus_{m \in \lambda} p^*_{\mathbb P^{\lambda}}L_m] = [X \xleftarrow {f \circ p_Y} Y \times \mathbb P^{\lambda} \xrightarrow {} pt; \Cal  O^{r- \ell(\lambda)} \oplus \bigoplus_{m \in \lambda} p^*_{\mathbb P^{\lambda}}L_m]$.
\end{itemize}
Our product $\bullet_{\oplus}$ gives us
\begin{align*}
[X \xleftarrow {f} Y \xrightarrow {s} pt] \bullet_{\oplus} & [pt \xleftarrow {} \mathbb P^{\lambda} \xrightarrow {} pt;  \Cal  O^{r- \ell(\lambda)} \oplus \bigoplus_{m \in \lambda} L_m] \\
& = [X \xleftarrow {f \circ p_Y} Y \times \mathbb P^{\lambda} \xrightarrow {} pt; \Cal  O^{r- \ell(\lambda)} \oplus \bigoplus_{m \in \lambda} p^*_{\mathbb P^{\lambda}}L_m].
\end{align*}
Indeed for this product we consider the following diagram:
$$\xymatrix{
& & p_{\mathbb P^{\lambda}}^*(\Cal  O^{r- \ell(\lambda)} \oplus \bigoplus_{m \in \lambda} L_m) \ar[d] \ar[dr] \\
& & Y \times \mathbb P^{\lambda} \ar [dl]_{p_Y} \ar[dr]^{p_{\mathbb P^{\lambda}}} & \Cal  O^{r- \ell(\lambda)} \oplus \bigoplus_{m \in \lambda} L_m \ar[d]&\\
& Y \ar [dl]_{f} \ar [dr]^{} && \mathbb P^{\lambda} \ar [dl]_{} \ar[dr]^{}\\
X & &  pt && pt }
$$
Note that $p_{\mathbb P^{\lambda}}^*(\Cal  O^{r- \ell(\lambda)} \oplus \bigoplus_{m \in \lambda} L_m) = \Cal  O^{r- \ell(\lambda)} \oplus \bigoplus_{m \in \lambda} p_{\mathbb P^{\lambda}}^*L_m$ since $\Cal  O^{r- \ell(\lambda)}$ is a trivial bundle. Therefore we can see that
\begin{align*}
\gamma_X\Bigl( [Y \xrightarrow f X] \otimes [\mathbb P^{\lambda}, & \Cal  O^{r- \ell(\lambda)} \oplus \bigoplus_{m \in \lambda} L_m] \Bigr) \\
& = [X \xleftarrow {f} Y \xrightarrow {s} pt] \bullet_{\oplus}  [pt \xleftarrow {} \mathbb P^{\lambda} \xrightarrow {} pt;  \Cal  O^{r- \ell(\lambda)} \oplus \bigoplus_{m \in \lambda} L_m].
\end{align*}
\end{rem}
\begin{rem} Let $[X \xleftarrow p  V \xrightarrow s Y; E] \in \Cal   M_{m,r}(X,Y)^+_{\oplus}$, and let
\begin{enumerate}
\item $[X \xleftarrow p V] := [X \xleftarrow p V \xrightarrow {\op{id}_V} V] \in \Cal   M_{0,0}(X,V)^+_{\oplus}$,
\item $[V; E] := [V \xleftarrow {\op{id}_V} V \xrightarrow {\op{id}_V} V; E] \in \Cal   M_{0,r}(V,V)^+_{\oplus}$,
\item $[V \xrightarrow s Y]:= [V \xleftarrow {\op{id}_V} V \xrightarrow s Y] \in \Cal   M_{m,0}(V,Y)^+_{\oplus}$.
\end{enumerate}
Then we have $[X \xleftarrow p  V \xrightarrow s Y; E] = [X \xleftarrow p V]\bullet_{\oplus} [V; E] \bullet_{\oplus}[V \xrightarrow s Y].$
\end{rem}
Now we define pushforward and pullback of cobordism bicycles of vector bundles:
\begin{defn}\label{push-pull}
\begin{enumerate} 
\item (Pushforward) 
\begin{enumerate}
\item For a \emph{proper} map $f:X \to X'$, the (proper) pushforward \emph{acting on the first factor $X \xleftarrow{p} V$}, in a usual way denoted by $f_*:\Cal   M_{m,r}(X,Y)^+ \to \Cal M_{m,r}(X',Y)^+$,  is defined by
$$f_*([X \xleftarrow{p} V  \xrightarrow{s} Y; E]):= [X' \xleftarrow{f \circ p} V  \xrightarrow{s} Y; E].$$
\item For a \emph{smooth} map $g:Y \to Y'$, the (smooth) pushforward \emph{acting on the second  factor $V  \xrightarrow{s} Y$}, \emph{in an unusual way} denoted by ${}_*g:\Cal   M_{m,r}(X,Y)^+ \to \Cal   M_{m+ \op{dim}g,r}(X,Y')^+$, is defined by
$$([X \xleftarrow{p} V  \xrightarrow{s} Y; E])\, {}_*g:= [X \xleftarrow{p} V  \xrightarrow{g \circ s} Y'; E].$$
Here we emphasize that ${}_*g$ is written on the right side of $([X \xleftarrow{p} V  \xrightarrow{s} Y; E])$ \emph{not on the left side}.\footnote{ For pushforward the notation ${}_*g$  and writing it on the right side and for pullback the notation ${}^*g$  and writing it on the right side were suggested by the referee, whom we appreciate for suggesting such an interesting notation. }
(Note that $m = \op{dim} s$ and $\op{dim} (g \circ s) = \op{dim}s + \op{dim}g = m + \op{dim}g$.)
\end{enumerate}
\item (Pullback) 
\begin{enumerate}
\item For a \emph{smooth} map $f:X' \to X$, the (smooth) pullback \emph{acting on the first factor $X \xleftarrow{p} V$}, in a usual way denoted by$f^*:\Cal   M_{m,r}(X,Y)^+ \to \Cal   M_{m+\op{dim}f,r}(X',Y)^+$ is defined by
$$f^*([X \xleftarrow{p} V  \xrightarrow{s} Y; E]) := [X' \xleftarrow{p'} X' \times_X V \xrightarrow{s \circ f'} Y; (f')^*E].$$
Here we consider the following commutative diagram:
$$\xymatrix{
& (f')^*E \ar[d] \ar[r] & E \ar[d] \\
& X' \times_X V \ar[dl]_{p'} \ar[r]^{\qquad f'} & V \ar [dl]^{p} \ar [dr]^{s}\\
X' \ar[r]_f & X &  & Y }
$$
(Note that the left diamond is a fiber square, thus $f':X'\times_X V \to V$ is smooth and $p':X'\times_X V \to X'$ is proper. Note that 
$\op{dim} f' = \op{dim}$ and $\op{dim}(s \circ f') = \op{dim} s + \op{dim}f' = m + \op{dim} f$.)
\item For a \emph{proper} map $g:Y' \to Y$, the (proper) pullback \emph{acting on the second  factor $V  \xrightarrow{s} Y$}, in an unusual way denoted by ${}^*g:\Cal   M_{m,r}(X,Y)^+ \to \Cal   M_{m,r}(X,Y')^+$, is defined by
$$([X \xleftarrow{p} V  \xrightarrow{s} Y; E])\, {}^*g := [X \xleftarrow{p \circ g'} V \times _Y Y' \xrightarrow{s'} Y'; (g')^*E].$$
Here we consider the following commutative diagram:
$$\xymatrix{
& E \ar[d] & (g')^*E \ar[l] \ar[d] \\
& V \ar [dl]^{p} \ar [dr]_{s} & V \times _Y Y' \ar[l]_{g' \quad } \ar[dr]^{s'}\\
X &  & Y & Y' \ar[l]^g}
$$
(Note that the right diamond is a fiber square, thus $s':V\times_Y Y'\to Y'$ is smooth and $g':V\times_Y Y'\to V$ is proper, and $\op{dim} s = \op{dim} s'$.)
\end{enumerate}
\end{enumerate}
\end{defn}
\begin{rem} 
\begin{enumerate}
\item We emphasize that as to pushforward, proper pushforward is concerned with the first factor and smooth pushforward is concerned with the second factor, but as to pullback, the involving factors are exchanged.
\item We remark that when we deal with a smooth map $f$ or $g$, both in pushforward and pullback, the first grading is added by the relative dimension $\op{dim} f$ or $\op{dim} g$, but that when we deal with proper maps, the first grading is not changed. In both pushforward and pullback, the second grading (referring to the dimension of vector bundle) is not changed.
\end{enumerate}
\end{rem}
\begin{pro}\label{proposition1}
The above three operations of product ($\bullet_{\oplus}$ and $\bullet_{\otimes}$), pushforward and pullback satisfy the following properties.
\begin{enumerate}
\item[($A_1$)] {\bf Product is associative}: For three varieties $X,Y,Z, W$ we have
\begin{align*}
(\alp \bullet_{\oplus} \be) \bullet_{\oplus}  \ga & = \alp \bullet_{\oplus} (\be \bullet_{\oplus} \ga) \in \Cal   M_{m+n+\ell,r+k+e}(X,W)^+,\\
(\alp \bullet_{\otimes}\be) \bullet_{\otimes}  \ga & = \alp \bullet_{\otimes} (\be \bullet_{\otimes} \ga) \in \Cal   M_{m+n+\ell,rke}(X,W)^+,
\end{align*}
where $\alp \in \Cal   M_{m,r}(X,Y)^+, \be \in \Cal   M_{n,k}(Y,Z)^+$ and $\be \in \Cal   M_{\ell,e}(Z,W)^+$, 
\item[($A_2$)] {\bf Pushforward is functorial} : 
\begin{enumerate}
\item For two proper maps $f_1:X \to X', f_2:X' \to X''$, we have
$$(f_2 \circ f_1)_* = (f_2)_* \circ (f_1)_*$$
where  $(f_1)_*:\Cal   M_{m,r}(X,Y)^+ \to \Cal   M_{m,r}(X',Y)^+$ and \\
\hspace{0.9cm} $(f_2)_*:\Cal   M_{m,r}(X',Y)^+ \to \Cal   M_{m,r}(X'',Y)^+$.
\item For two smooth maps $g_1:Y \to Y', g_2:Y' \to Y''$ we have
$${}_*(g_2 \circ g_1) = {}_*(g_1) \circ {}_*(g_2)  {}
\, \quad \text{i.e., $\alp\,{}_*(g_2 \circ g_1) = (\alp \, {}_*(g_1)) \, {}_*(g_2)$} $$
where ${}_*(g_1):\Cal   M_{m,r}(X,Y) \to \Cal   M_{m+\op{dim}g_1,r}(X,Y')$ and \\
\hspace{0.9cm}  ${}_*(g_2):\Cal   M_{m+\op{dim}g_1,r}(X,Y') \to \Cal   M_{m+\op{dim}g_1+\op{dim}g_2,r}(X,Y'')$.
\end{enumerate}
\item[($A_2$)'] {\bf Proper pushforward and smooth pushforward commute}: For a proper map $f:X \to X'$ and a smooth map $g:Y \to Y'$ we have
$${}_*g \circ f_*= f_* \circ \, {}_*g, \,\,  \text{i.e.,} \, \, (f_*\alp)\, {}_*g = f_*(\alp\, {}_*g) {} \footnote {The referee remarked that introducing the above unusual notation turns, e.g.,  the condition ($A_2$)' into a ``bimodule-like" condition ($(rm)s=r(ms)$ for an $R$-$S$-bimodule $M$ with two rings $R,S$; $r \in R, s \in M , m \in M$) , which makes the proofs in the paper easier to follow.} \, \,\,  \text {for} \,\,\, \alp \in \Cal M_{m,r}(X,Y)^+$$
i.e, the following diagram commutes:
$$\CD
\Cal   M_{m,r}(X,Y)^+ @> {f_*}>> \Cal   M_{m,r}(X',Y)^+ \\
@V {{}_*g} VV @VV {{}_*g} V\\
\Cal   M_{m+\op{dim}g,r}(X,Y')^+  @>> {f_*} > \Cal   M_{m+\op{dim}g,r}(X',Y')^+ . \endCD
$$ 
\item[($A_3$)] {\bf Pullback is functorial}: 
\begin{enumerate}
\item For two smooth maps $f_1:X \to X', f_2:X' \to X''$ we have
$$(f_2 \circ f_1)^* = (f_1)^* \circ (f_2)^*$$
where  $(f_2)^*:\Cal   M_{m,r}(X'',Y)^+ \to \Cal   M_{m+ \op{dim}f_2,r}(X',Y)^+$ and \\
\hspace{1cm} $(f_1)^*:\Cal   M_{m +\op{dim}f_2,r}(X',Y)^+ \to \Cal   M_{m+  \op{dim}f_2+\op{dim}f_1,r}(X,Y)^+$. 
\item For two proper maps $g_1:Y \to Y', g_2:Y' \to Y''$ we have
$${}^*(g_2 \circ  g_1)  = {}^*(g_2) \circ {}^*(g_1) \, \quad \text{i.e., $\alp\,{}^*(g_2 \circ g_1) = (\alp \, {}^*(g_2)) \, {}^*(g_1)$} $$
where ${}^*(g_1):\Cal   M_{m,r}(X,Y')^+ \to \Cal   M_{m,r}(X,Y)^+$ and \\
\hspace{1cm}  ${}^*(g_2):\Cal   M_{m,r}(X,Y'')^+ \to \Cal   M_{m,r}(X,Y')^+$.
\end{enumerate}
\item[($A_3$)'] {\bf Proper pullback and smooth pullback commute}: For a smooth map $g:X' \to X$ and a proper map $f:Y' \to Y$ we have
$$g^* \circ {}^*f={}^*f \circ g^*,\,\,  \text{i.e.,} \, \, g^*(\alp \, {}^*f) = (g^*\alp)\, {}^*f \, \,\,  \text {for} \,\,\, \alp \in \Cal M_{m,r}(X,Y)^+$$
i.e, the following diagram commutes:
$$\CD
\Cal   M_{m,r}(X,Y)^+ @> {{}^*f}>> \Cal   M_{m,r}(X,Y')^+ \\
@V {g^*} VV @VV {g^*} V\\
\Cal   M_{m+\op{dim}g,r}(X',Y)^+  @>> {{}^*f} > \Cal   M_{m+\op{dim}g,r}(X',Y')^+ . \endCD
$$ 
\item[($A_{12}$)] {\bf Product and pushforward commute}: 
Let $\alp \in \Cal M_{m,r}(X,Y)^+$ and $\be \in \Cal   M_{n,k}(Y,Z)^+$.
 \begin{enumerate} 
\item For a proper morphism $f:X \to X'$, 
\begin{align*}
f_*(\alp \bullet_{\oplus}  \be) & = (f_*\alp) \bullet_{\oplus}  \be   \quad (\in \Cal   M_{m+n,r+k}(X',Z)^+),\\
f_*(\alp \bullet_{\otimes} \be) & = (f_*\alp) \bullet_{\otimes} \be   \quad (\in \Cal   M_{m+n,rk}(X',Z)^+),
\end{align*}
i.e., (for the sake of clarity), the following diagrams commute:
$$\CD
\Cal   M_{m,r}(X,Y)^+ \otimes \Cal   M_{n,k}(Y,Z)^+ @> {\bullet_{\oplus}}>> \Cal   M_{m+n,r+k}(X,Z)^+ \\
@V {f_* \times \op{id}_{\Cal   M_{n,k}(Y,Z)^+}} VV @VV {f}_* V\\
\Cal   M_{m,r}(X',Y)^+ \otimes \Cal   M_{n,k}(Y,Z)^+  @>> {\bullet_{\oplus}} > \Cal   M_{m+n,r+k}(X',Z)^+ . \endCD
$$ 
$$\CD
\Cal   M_{m,r}(X,Y)^+ \otimes \Cal   M_{n,k}(Y,Z)^+ @> {\bullet_{\otimes}}>> \Cal   M_{m+n,rk}(X,Z)^+ \\
@V {f_* \times \op{id}_{\Cal   M_{n,k}(Y,Z)^+}} VV @VV {f}_* V\\
\Cal   M_{m,r}(X',Y)^+ \otimes \Cal   M_{n,k}(Y,Z)^+  @>> {\bullet_{\otimes}} > \Cal   M_{m+n,rk}(X',Z)^+ . \endCD
$$
\item For a smooth morphism $g:Z \to Z'$, 
\begin{align*}
(\alp \bullet_{\oplus}  \be)\, {}_*g & = \alp \bullet_{\oplus}  \be\, {}_*g   \quad (\in \Cal   M_{m+n+\op{dim}g,r+k}(X,Z')^+),\\
(\alp \bullet_{\otimes} \be)\, {}_*g & = \alp \bullet_{\otimes} \be\, {}_*g   \quad (\in \Cal   M_{m+n+\op{dim}g,rk}(X,Z')^+),
\end{align*}
i.e., as to the product $\bullet_{\oplus}$ (the case of $\bullet_{\otimes}$ is omitted), the following diagram commutes:
$$\CD
\Cal   M_{m,r}(X,Y)^+ \otimes \Cal   M_{n,k}(Y,Z)^+ @> {\bullet_{\oplus}}>> \Cal   M_{m+n,r+k}(X,Z)^+ \\
@V {\op{id}_{\Cal   M_{m,r}(X,Y)^+} \otimes \, \, {}_*g} VV @VV {{}_*g} V\\
\quad \quad \Cal   M_{m,r}(X,Y)^+ \otimes \Cal   M_{n+\op{dim}g,k}(Y,Z')^+  @>> {\bullet_{\oplus}} > \Cal   M_{m+n+\op{dim}g,r+k}(X,Z')^+ . \endCD
$$ 
\end{enumerate}
\item[($A_{13}$)] {\bf Product and pullback commute}: Let $\alp \in \Cal   M_{m,r}(X,Y)^+$ and 

\noindent
$\be \in    \Cal   M_{n,k}(Y,Z)^+$.
 
\begin{enumerate} 
\item For a smooth morphism $f:X' \to X$,
\begin{align*} 
f^*(\alp \bullet_{\oplus}  \be) & = (f^*\alp) \bullet_{\oplus}  \be \quad ( \in \Cal   M_{m+n+ \op{dim}f,r+k}(X',Z)^+ ),\\
f^*(\alp \bullet_{\otimes}  \be) & = (f^*\alp) \bullet_{\otimes} \be \quad ( \in \Cal   M_{m+n+ \op{dim}f,rk}(X',Z)^+ ),
\end{align*}
i.e., as to the product $\bullet_{\oplus}$, the following diagram commutes:
$$\CD
\Cal   M_{m,r}(X,Y)^+ \otimes \Cal   M_{n,k}(Y,Z)^+ @> {\bullet_{\oplus}}>> \Cal   M_{m+n,r+k}(X,Z)^+ \\
@V {f^* \times \op{id}_{\Cal   M_{n,k}(Y,Z)^+}} VV @VV {f^*} V\\
\Cal   M_{m+\op{dim}f,r}(X',Y)^+ \otimes \Cal   M_{n,k}(Y,Z)^+ @>> {\bullet_{\oplus}} > \Cal   M_{m+n+\op{dim},r+k}(X',Z)^+   . \endCD
$$ 

\item For a proper morphism $g:Z' \to Z$, 
\begin{align*} 
(\alp \bullet_{\oplus} \be)\, {}^*g & = \alp \bullet_{\oplus} \be \, {}^*g \quad ( \in \Cal   M_{m+n,r+k}(X,Z')^+ ),\\
(\alp \bullet_{\otimes} \be)\, {}^*g & = \alp \bullet_{\otimes} \be\, {}^*g \quad ( \in \Cal   M_{m+n,rk}(X,Z')^+ ),
\end{align*}
i.e., as to the product $\bullet_{\oplus}$, the following diagram commutes:
$$\CD
\Cal   M_{m,r}(X,Y)^+ \otimes \Cal   M_{n,k}(Y,Z)^+ @> {\bullet_{\oplus}}>> \Cal   M_{m+n,r+k}(X,Z)^+ \\
@V {\op{id}_{\Cal   M_{m,r}(X,Y)^+} \otimes \, \, {}^*g} VV @VV {{}^*g} V\\
\quad \quad \Cal   M_{m,r}(X,Y)^+ \otimes \Cal   M_{n,k}(Y,Z')^+  @>> {\bullet_{\oplus}} > \Cal   M_{m+n,r+k}(X,Z')^+ . \endCD
$$ 
\end{enumerate}
\item[($A_{23}$)] {\bf Pushforward and pullback commute}: For $\alp \in \Cal   M_{m,r}(X,Y)^+$
 
\begin{enumerate} 
\item (proper pushforward and proper pullback commute) For proper morphisms $f:X \to X'$ and $g:Y' \to Y$ and for $\alp \in \Cal   M_{m,r}(X,Y)^+$
$$(f_*\alp)\, {}^*g = f_*(\alp\, {}^*g) \quad ( \in \Cal   M_{m,r}(X',Y')^+ ),$$
i.e., the following diagram commutes:
$$\CD
\Cal   M_{m,r}(X,Y)^+ @> {f_*}>> \Cal   M_{m,r}(X',Y)^+ \\
@V {{}^*g} VV @VV {{}^*g} V\\
M_{m,r}(X,Y')^+  @>> {f_*} > \Cal   M_{m,r}(X',Y')^+ . \endCD
$$ 
\item (smooth pushforward and smooth pullback commute) For smooth morphisms $f:X' \to X$ and $g:Y \to Y'$ and for $\alp \in \Cal   M_{m,r}(X,Y)^+$
$$f^*(\alp\, {}_*g) = (f^*\alp)\, {}_*g \quad ( \in \Cal   M_{m+ \op{dim} f + \op{dim}g,r}(X',Y')^+ ),$$
i.e., the following diagram commutes:
$$\CD
\Cal   M_{m,r}(X,Y)^+ @> {{}_*g}>> \Cal   M_{m+\op{dim}g,r}(X,Y')^+ \\
@V {f^*} VV @VV {f^*} V\\
M_{m+\op{dim}f,r}(X',Y)^+  @>> {{}_*g} > \Cal   M_{m+\op{dim}f+\op{dim}g,r}(X',Y')^+ . \endCD
$$
\item (proper pushforward and smooth pullback ``commute" in the following sense) For the following fiber square
 $$\CD
\widetilde X @> {\widetilde f}>> X''\\
@V {\widetilde g} VV @VV {g} V\\
X' @>> {f} > X \endCD
$$ 
with $f$ proper and $g$ smooth, we have
$$g^*f_* = \widetilde f_* \widetilde g^*,$$
i.e., the following diagram commutes:
$$\CD
\Cal   M_{m,r}(X',Y)^+ @> {f_*}>> \Cal   M_{m,r}(X,Y)^+ \\
@V {\widetilde g^*} VV @VV {g^*} V\\
M_{m+\op{dim}g,r}(\widetilde X,Y)^+  @>> {\widetilde f_*} > \Cal   M_{m+\op{dim}g,r}(X'',Y)^+ . \endCD
$$
(Note that $\op{dim}\widetilde g = \op{dim}g$.)

\item (smooth pushforward and proper pullback ``commute" in the following sense) For the following fiber square
 $$\CD
\widetilde Y @> {\widetilde f}>> Y''\\
@V {\widetilde g} VV @VV {g} V\\
Y' @>> {f} > Y \endCD
$$ 
with $f$ proper and $g$ smooth, for $\alp \in \Cal   M_{m,r}(X,Y'')^+$ we have
$$(\alp\, {}_*g)\, {}^*f = (\alp\, {}^*\widetilde f)\, {}_*\widetilde g,$$
i.e., the following diagram commutes:
$$\CD
\Cal   M_{m,r}(X,Y'')^+ @> {{}_*g}>> \Cal   M_{m+\op{dim}g,r}(X,Y)^+ \\
@V {{}^*\widetilde f} VV @VV {{}^*f} V\\
M_{m,r}(X,\widetilde Y)^+  @>> {{}_*\widetilde g} > \Cal   M_{m+\op{dim}g,r}(X,Y')^+ . \endCD
$$
(Note that $\op{dim}\widetilde g = \op{dim}g$.)

\end{enumerate}
\item[($A_{123}$)] {\bf ``Projection formula"}: 
\begin{enumerate}
\item For a smooth morphism $g:Y \to Y'$ and $\alp \in \Cal   M_{m,r}(X,Y)^+$ and $\be \in \Cal   M_{n,k}(Y', Z)^+$,
\begin{align*} 
(\alp\, {}_*g) \bullet_{\oplus} \be & = \alp \bullet_{\oplus} g^*\be \quad ( \in \Cal   M_{m+n+ \op{dim}g,r+k}(X,Z)^+ ),\\
(\alp\, {}_*g) \bullet_{\otimes} \be & = \alp \bullet_{\otimes} g^*\be \quad ( \in \Cal   M_{m+n+ \op{dim}g,rk}(X,Z)^+ ),
\end{align*}
i.e., as to the product $\bullet_{\oplus}$, the following diagram commutes:
$$
\xymatrix{
\Cal   M_{m,r}(X,Y)^+ \otimes \Cal   M_{n,k}(Y',Z)^+ \ar[d]_{{}_*g \otimes \op{id}_{\Cal   M_{n,k}(Y',Z)^+}} \ar[rr]^{\op{id}_{\Cal   M_{m,r}(X,Y)^+} \times  g^*} && \Cal   M_{m,r}(X,Y)^+ \otimes \Cal   M_{n+\op{dim}g,k}(Y,Z)^+ \ar[d]^{\bullet_{\oplus}}  \\
\Cal   M_{m+\op{dim}g,r}(X,Y')^+ \otimes \Cal   M_{n,k}(Y',Z)^+ \ar[rr]_{\bullet_{\oplus}} && \Cal   M_{m+n+\op{dim}g,r+k}(X,Z)^+   .
}
$$ 
\item For a proper map $g:Y' \to Y$, $\alp \in \Cal   M_{m,r}(X,Y)^+$ and $\be \in \Cal   M_{n,k}(Y', Z)^+$,
\begin{align*} 
(\alp\, {}^*g) \bullet_{\oplus} \be & = \alp \bullet_{\oplus} g_*\be \quad ( \in \Cal   M_{m+n,r+k}(X,Z)^+ ),\\
(\alp\, {}^*g) \bullet_{\otimes}\be & = \alp \bullet_{\otimes} g_*\be \quad ( \in \Cal   M_{m+n,rk}(X,Z)^+ ),
\end{align*}
i.e., as to the product $\bullet_{\oplus}$, the following diagram commutes:
$$\xymatrix{
\Cal   M_{m,r}(X,Y)^+ \otimes \Cal   M_{n,k}(Y',Z)^+ \quad  \ar[d]_{{}^*g \otimes \op{id}_{\Cal   M_{n,k}(Y',Z)^+}} \ar[rr]^{\op{id}_{\Cal   M_{m,r}(X,Y)^+} \times  g_*} && \Cal   M_{m,r}(X,Y)^+ \otimes \Cal   M_{n,k}(Y,Z)^+  \ar[d]^{\bullet_{\oplus}}\\
\Cal   M_{m,r}(X,Y')^+ \otimes \Cal   M_{n,k}(Y',Z)^+ \ar[rr]_{\bullet_{\oplus}} && \Cal M_{m+n,r+k}(X,Z)^+   . 
}
$$ 

\end{enumerate}
\end{enumerate}
\end{pro}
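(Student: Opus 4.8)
The plan is to verify each of the nine properties directly from the definitions in Definition~\ref{prod} and Definition~\ref{push-pull}, reducing every statement to three standard facts: (i) proper maps and smooth maps are each closed under composition and stable under base change, and the relative dimension $\op{dim}$ of a smooth map is additive under composition and preserved under base change; (ii) fiber products are associative and functorial, so the canonical isomorphisms $(V\times_Y W)\times_Z U\cong V\times_Y(W\times_Z U)$ and the universal property of the fiber product identify the iterated pullback varieties occurring on the two sides of each identity; (iii) vector-bundle pullback is functorial, $(g\circ f)^*E\cong f^*g^*E$ and $\op{id}^*E\cong E$, and compatible with Whitney sum and tensor product, $f^*(E\oplus F)\cong f^*E\oplus f^*F$ and $f^*(E\otimes F)\cong f^*E\otimes f^*F$. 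Since $\Cal M_{*,*}(X,Y)^+$ is the group completion of a monoid of \emph{isomorphism classes} and every operation is defined on representatives and extended (bi)linearly, it suffices to check each identity on a single generator $[X\xleftarrow{p}V\xrightarrow{s}Y;E]$, or on a pair (resp.\ triple) of generators for the product identities, by exhibiting the canonical isomorphism of spans-with-bundles witnessing that the two sides represent the same isomorphism class; the identity then descends to the Grothendieck group by linearity.

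The formal cases come first. For $(A_1)$, associativity of $\bullet_{\oplus}$ and $\bullet_{\otimes}$ is associativity of the fiber product: writing $\alp=[X\xleftarrow{p}V\xrightarrow{s}Y;E]$, $\be=[Y\xleftarrow{q}V'\xrightarrow{t}Z;F]$, $\ga=[Z\xleftarrow{u}V''\xrightarrow{v}W;G]$, both sides are represented by the triple fiber product $V\times_Y V'\times_Z V''$ with its evident outer maps to $X$ and $W$, and the bundles $(\pi_V^*E\oplus\pi_{V'}^*F)\oplus\pi_{V''}^*G$ and $\pi_V^*E\oplus(\pi_{V'}^*F\oplus\pi_{V''}^*G)$ agree up to the canonical associativity isomorphism of $\oplus$ (and likewise for $\otimes$), while the double gradings match because $\op{dim}$ adds. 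Properties $(A_2)$, $(A_2)'$, $(A_3)$, $(A_3)'$ are essentially bookkeeping: proper pushforward post-composes the first leg $p$, smooth pushforward post-composes the second leg $s$ (shifting the first grading by $\op{dim}g$), and the two act on disjoint legs, so they are functorial and commute; the pullback statements are the analogous assertions, where one additionally uses functoriality of vector-bundle pullback and, for smooth pullback along the first leg, the iterated-fiber-product identity $X''\times_{X'}(X'\times_X V)\cong X''\times_X V$ (resp.\ $(V\times_{Y''}Y')\times_{Y'}Y\cong V\times_{Y''}Y$ for proper pullback along the second leg) together with additivity of relative dimension.

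The bookkeeping-heavy cases are $(A_{12})$, $(A_{13})$, $(A_{23})$ and $(A_{123})$, and among these the main obstacle is expected to be the mixed base-change items $(A_{23})$(c)--(d) and the projection formula $(A_{123})$. For each of these one draws the relevant tower of fiber squares, uses repeatedly that a juxtaposition of fiber squares is again a fiber square together with base-change stability of properness and smoothness to identify the source variety on the two sides, and then checks that the two vector bundles match under that identification by functoriality of pullback and its compatibility with $\oplus$ and $\otimes$. For example, in $(A_{123})$(a) one assembles the smooth map $g\colon Y\to Y'$, the span $X\xleftarrow{p}V\xrightarrow{s}Y$ and the span $Y'\xleftarrow{q}W\xrightarrow{t}Z$: the left side $(\alp\,{}_*g)\bullet_{\oplus}\be$ is built from $V\times_{Y'}W$ formed along $V\xrightarrow{g\circ s}Y'\xleftarrow{q}W$, whereas the right side $\alp\bullet_{\oplus}g^*\be$ is built from $V\times_Y(Y\times_{Y'}W)$, and these are canonically isomorphic, the two bundles matching because the pullback of $F$ along the projection to $W$ factors through the projection to $Y\times_{Y'}W$ followed by $g'$. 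The two points requiring genuine care throughout are: (a) keeping the double grading straight, since a smooth map shifts the first grading by its relative dimension while a proper map leaves it unchanged; and (b) ensuring that in the mixed items the fiber-square hypothesis is invoked exactly at the step where the base-change isomorphism of fiber products is needed. Once the diagrams are drawn correctly, each of the nine identities becomes a canonical isomorphism of spans-with-bundles, hence an equality of isomorphism classes in $\Cal M_{*,*}$, which extends to $\Cal M_{*,*}(-,-)^+$ by (bi)linearity.
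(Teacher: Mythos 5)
Your proposal is correct and follows essentially the same route as the paper: the paper also declares the verification straightforward, checks each identity on generators by identifying the iterated fiber products on both sides (e.g.\ $V\times_{Y'}W\cong V\times_Y(Y\times_{Y'}W)$ for the projection formula) and matching the bundles via functoriality of pullback and compatibility with $\oplus$ and $\otimes$, writing out details only for $(A_{23})$(c), (d) and $(A_{123})$. Your sketch singles out exactly these same cases as the ones needing care, so there is no substantive difference in method.
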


\begin{proof} It is straightforward. For the sake of readers' convenience. we give proofs to (c) and (d) of $A_{23}$ and the ``projection formula" with respect to the product $\bullet_{\oplus}$.

(c) Let $[X' \xleftarrow p V \xrightarrow s Y; E] \in \Cal   M_{m, r}(X',Y)^+$ and consider the following commutative diagrams:
$$
\xymatrix{
& &  (g')^*E \ar[d] \ar[dr]\\
& & \widetilde X \times_{X'} V \ar[dl]_{p'} \ar[dr]^{g'} & E \ar[d] \\
& \widetilde X \ar[dl]_{\widetilde f} \ar[dr]^{\widetilde g} & & V \ar[dl]_p \ar[dr]^s\\
X'' \ar[dr]_g & & X' \ar[dl]_f & & Y\\
& X 
}
$$
\begin{align*}
\widetilde f_*\widetilde g^*([X' \xleftarrow p V \xrightarrow s Y; E]) & = \widetilde f_* ([\widetilde X \xleftarrow {p'} \widetilde X \times_{X'} V \xrightarrow {s \circ g'} Y; (g')^*E] \\
& = [X'' \xleftarrow {\widetilde f \circ p'} \widetilde X\times_{X'} V \xrightarrow {s \circ g'} Z; (g')^*E] \\
& = g^*([X \xleftarrow {f \circ p} V \xrightarrow s Z; E])\\
& = g^* (f_*([X' \xleftarrow p V \xrightarrow s Y; E])\\
& = g^*f_*([X' \xleftarrow p V \xrightarrow sY; E]).
\end{align*}
Hence we have that $g^*f_* = \widetilde f_*\widetilde g^*.$ \\

(d) Let $[X \xleftarrow p V \xrightarrow s Y''; E] \in \Cal   M_{m, r}(X,Y'')^+$ and consider the following commutative diagrams:
$$
\xymatrix{
& &  (f')^*E \ar[d] \ar[dl]\\
& E \ar[d]  & V \times _{Y''} \widetilde Y  \ar[dl]_{f'} \ar[dr]^{s'}\\
& V \ar[dl]_{p} \ar[dr]^{s} & & \widetilde Y \ar[dl]_{\widetilde f} \ar[dr]^{\widetilde g}\\
X  & & Y'' \ar[dr]_g & & Y' \ar[dl]^f \\
& & &  Y 
}
$$
\begin{align*}
([X \xleftarrow p V \xrightarrow s Y''; E]_*g)^*f & = [X \xleftarrow {p} V \xrightarrow {g \circ s} Y;E]^*f \\
& = [X \xleftarrow {p \circ f'} V \times _{Y''} \widetilde Y  \xrightarrow {\widetilde g \circ s'} Y'; (f')^*E] \\
& = [X \xleftarrow {p \circ f'} V \times _{Y''} \widetilde Y   \xrightarrow {s'} \widetilde Y; (f')^*E]_*\widetilde g\\
& = ([X \xleftarrow p V \xrightarrow s Y''; E]^*\widetilde f)_*\widetilde g.
\end{align*}
Hence we have that $(\alp\, {}_*g)\, {}^*f = (\alp\, {}^*\widetilde f)\, {}_*\widetilde g.$\\

($A_{123}$): Let $\alp = [X \xleftarrow p V \xrightarrow s Y;E], \be =[Y' \xleftarrow q W \xrightarrow t Z; F].$\\

(a) For the first projection formula, we consider the following commutative diagram.\\
$$\xymatrix{
& & (q'')^*E \oplus (g'\circ s')^*F \ar[d] \\
& E \ar[d] & V\times_Y (Y'\times_{Y'} W) \ar[dl]_{q''} \ar[dr]^{s'} & (g')^*F \ar[d] \ar[dr] &&\\
& V \ar[dl]_{p} \ar[dr]^{s} & &  Y \times_{Y'} W \ar[dl]_{q'}  \ar[dr]^{g'} & F \ar[d] \\
X && Y \ar[dr]_g  && W \ar[dl]_q \ar[dr]^t  && \\
& & & Y' & & Z
}
$$
Since $\alp\, {}_*g =[X \xleftarrow p V \xrightarrow {g \circ s} Y'; E]$, we have
\begin{align*}
\alp\, {}_*g  \bullet_{\oplus} \be & = [X \xleftarrow {p \circ q'} V\times_{Y'} W \xrightarrow {t \circ (g' \circ s')} Z; (q'')^*E\oplus(g'\circ s')^*F]\\
& = [X \xleftarrow {p \circ q'} V\times_Y (Y'\times_{Y'} W) \xrightarrow {(t \circ g') \circ s'} Z; (q'')^*E\oplus (s')^*((g')^*F)]\\
& = [X \xleftarrow p V \xrightarrow s Y;E] \bullet_{\oplus} [Y \xleftarrow {q'} Y'\times_{Y'} W \xrightarrow {t \circ g'} Z;(g')^*F]\\
&= \alp \bullet_{\oplus} g^*\be.
\end{align*}

(b) For the second projection formula, we consider the following commutative diagram:
$$\xymatrix{
& & & (q')^*(g')^*E \oplus (s'')^*F\ar[d] \\
& & (g')^*E \ar[d] \ar[dl]& (V\times_Y Y') \times_{Y'} W \ar[dl]_{q'} \ar[dr]^{s''} & F \ar[d] \\
& E \ar[d] & V \times_Y Y' \ar[dl]_{g'} \ar[dr]^{s'} & &  W \ar[dl]_q  \ar[dr]^t && \\
&  V \ar[dl]_p \ar[dr]^s & & Y' \ar[dl]_g & & Z\\
X & & Y
}
$$
Then, since $\alp\, {}^*g = [X \xleftarrow {p \circ g'} V \times_Y Y' \xrightarrow {s'} Y'; (g')^*E]$, we have
\begin{align*}
\alp\, {}^*g  \bullet_{\oplus} \be &= [X \xleftarrow {p \circ g' \circ q'}(V\times_Y Y') \times_{Y'} W \xrightarrow {t \circ s''} Z; (q')^*(g')^*E \oplus(s'')^*F]\\
\end{align*}
\begin{align*}
& = [X \xleftarrow {p \circ (g' \circ q')} V\times_Y W \xrightarrow {t \circ s''} Z; (g' \circ q')^*E \oplus(s'')^*F]\\
& = [X \xleftarrow p V \xrightarrow Y; E] \bullet_{\oplus} [Y \xleftarrow {g \circ q} W \xrightarrow t  Z; F]\\
& = \alp \bullet_{\oplus}  g_*\be.
\end{align*}
\end{proof}
\begin{rem} $1_X=[X \xleftarrow {\op{id}_X} X \xrightarrow {\op{id}_X} X] \in \Cal   M_{0, 0}(X,X)^+$ satisfies that $1_X \bullet_{\oplus} \alp = \alp$ for any element $\alp \in \Cal   M_{m, r}(X, Y)^+$ and $\be \bullet_{\oplus} 1_X = \be$ for any element $\be \in \Cal   M_{m, r}(Y, X)^+$. As to the product $\bullet_{\otimes}$, let $\jeden_X=[X \xleftarrow {\op{id}_X} X \xrightarrow {\op{id}_X} X; \Cal  O_X] \in \Cal   M_{0, 0}(X,X)^+$ (where $\Cal  O_X$ is the trivial line bundle) satisfies that $\jeden_X \bullet_{\otimes } \alp = \alp$ for any element $\alp \in \Cal   M_{m, r}(X, Y)^+$ and $\be \bullet_{\otimes} \jeden_X = \be$ for any element $\be \in \Cal   M_{m, r}(Y, X)^+$.
\end{rem} 
\begin{rem} $\Cal   M_{m, r}(X, Y)^+$ with the product $\bullet_{\oplus}$ considered shall be denoted by $\Cal   M_{m, r}(X, Y)^+_{\oplus}$ and $\Cal   M_{m, r}(X, Y)^+$ with the product $\bullet _{\otimes}$ shall be denoted by $\Cal   M_{m, r}(X, Y)^+_{\otimes}$.
\end{rem}
\begin{rem} In a way similar to that of constructing $\mathbb {OM}^{prop}_{sm}(X \xrightarrow f Y)$ generated by the isomorphism classes $[V \xrightarrow h X; L_1, \cdots, L_r]$ such that $h$ is proper and $f \circ h$ is smooth and $L_i$'s are line bundles over $V$, we can replace $[V \xrightarrow h X; L_1, \cdots, L_r]$ by the isomorphism classes $[V \xrightarrow h X;E]$ with line bundles $\{L_1, \cdots, L_r\}$ being replaced by one vector bundle $E$. Then in \cite{AY} (cf.\cite{An2, An3}) T. Annala and the author have generalized Lee--Pandharipande's algebraic cobordism $\omega_{*,*}(X)$ of vector bundles  \cite{LeeP}(also see \cite{LP}) to a bivariant-theoretic analogue $\Omega^{*,*}(X \to Y)$ in a way similar to that of Annala's construction of the bivariant derived algebraic cobordism $\Omega^*(X \to Y)$.

\end{rem}
\section{Cobordism bicycles of finite tuples of line bundles}
Here we consider a ``bi-variant" analogue of Levine--Morel's construction of algebraic cobordism $\Omega_*(X)$ \cite{LM} via cobordism bicycles of finite tuples of line bundles. 

In \cite{LM} Levine and Morel consider what they call a cobordism cycle
$$[M \xrightarrow {p} X; L_1, \cdots L_r],$$
which is the isomorphism class of $(M \xrightarrow {p} X; L_1, \cdots L_r)$ where $M$ is smooth and irreducible, $p:M \to X$ is proper and $L_i$'s are line bundles over $M$. $(M \xrightarrow {p} X; L_1, \cdots L_r)$ and $(M' \xrightarrow {p'} X; L'_1, \cdots L'_r)$ are called isomorphic if there exists an isomorphism $h:M \to M'$ such that (1) the following diagram commutes
$$\xymatrix{
M \ar[dd]_h^{\cong} \ar[dr]^p\\
& X\\
M' \ar[ur]_{p'}
}
$$
and (2) there exists a bijection $\sigma:\{1, 2, \cdots, r\} \cong \{1, 2, \cdots, r\}$ such that $L_i \cong h^*L_{\sigma(i)}$.

So, here we consider a ``bicycle" version of this cobordism cycle. Namely we consider the isomorphism class of
a cobordism bicycle of a finite tuple of line bundles, instead of one vector bundle. 

\begin{defn} $(X \xleftarrow{p} V  \xrightarrow{s} Y; L_1, \cdots, L_r)$ and $(X \xleftarrow{p'} V'  \xrightarrow{s'} Y; L'_1, \cdots, L'_r)$ are called isomorphic if the following conditions hold:
\begin{enumerate}
\item There exists an isomorphism $h:V \cong V'$ such $(X \xleftarrow{p} V  \xrightarrow{s} Y) \cong (X \xleftarrow{p'} V'  \xrightarrow{s'} Y)$ as correspondences (as in (1) of Definition \ref{bicycle}), 
\item There exists a bijection $\sigma:\{1, 2, \cdots, r\} \cong \{1, 2, \cdots, r\}$ such that $L_i \cong h^*L_{\sigma(i)}$.
\end{enumerate}
The isomorphism class of $(X \xleftarrow{p} V  \xrightarrow{s} Y; L_1, \cdots, L_r)$ is called simply \emph{a cobordism bicycle} (instead of a cobordism bicycle of line bundles) and denoted by $[X \xleftarrow{p} V  \xrightarrow{s} Y; L_1, \cdots, L_r]$.
\end{defn}
It is clear that when $Y=pt$ is a point the cobordism bicycle $[X \xleftarrow{p} V  \xrightarrow{s} pt; L_1, \cdots, L_r]$
is the same as the cobordism cycle $[V \xrightarrow{p} X; L_1, \cdots, L_r]$.
\begin{defn}\label{def1} We define 
$$\Cal {Z}^i(X, Y)$$
to be the free abelian group generated by the set of isomorphism classes of cobordism bicycle 
$$[X \xleftarrow{p} V  \xrightarrow{s} Y; L_1, L_2, \cdots, L_r],$$
such that $-i+r = \op{dim}s = \op{dim} V - \op{dim}Y$, modulo the following additive relation
\begin{align*}
[X \xleftarrow {p_1} V_1 \xrightarrow {s_1} Y; L_1, \cdots, L_r] & + [X \xleftarrow {p_2} V_2 \xrightarrow {s_2} Y; L'_1, \cdots, L'_r] \\
& := [X \xleftarrow {p_1 \sqcup p_2 } V_2 \xrightarrow {s_1 \sqcup s_2} Y; L_1 \sqcup L'_1, \cdots, L_r \sqcup L'_r].
\end{align*}
\end{defn}
\begin{rem} Such a grading is due to the requirement that for $Y =pt$ we want to have $\Cal {Z}^i(X, pt) = \Cal Z_{-i}(X)$. Here we note that $V$ is smooth, since $s:V \to pt$ is smooth.
According to the definition (\cite[Definition 2.1.6]{LM}) of grading of Levine-Morel's algebraic pre-cobordism $\Cal Z_*(X)$, the degree (or dimension) of the cobordism cycle $[V \xrightarrow {p}  X; L_1, \cdots, L_r] \in \Cal Z_*(X)$ is $\op{dim} V - r$, i.e.

$[V \xrightarrow {p}  X; L_1, \cdots, L_r] \in \Cal Z_{-i}(X) \Longleftrightarrow -i = \op{dim} V - r$, namely, $-i + r = \op{dim} V$. 
\end{rem}

The following definitions are similar to those in the case of cobordism bicycles of vector bundles, but we write down them for the sake of convenience.
\begin{defn}\label{ppp of cbc}
\begin{enumerate} 
\item (Product of cobordism bicycles) We define the product of cobordism bicycles as follows:
$$\bullet : \Cal {Z}^i(X, Y) \otimes \Cal {Z}^j(Y,Z) \to \Cal {Z}^{i+j}(X, Z)$$
\begin{align*}
& [X \xleftarrow{p_1} V_1  \xrightarrow{s_1} Y; L_1, L_2, \cdots, L_r] \bullet [Y \xleftarrow{p_2} V_2  \xrightarrow{s_2} Z; M_1, M_2, \cdots, M_k] \\
& := [(X \xleftarrow{p_1} V_1  \xrightarrow{s_1} Y) \circ (Y \xleftarrow{p_2} V_2  \xrightarrow{s_2} Z) ; \widetilde{p_2}^*L_1, \cdots, \widetilde{p_2}^*L_r, \widetilde{s_1}^*M_1, \cdots \widetilde{s_1}^*M_k]
\end{align*}

$$\xymatrix{
&& V_1\times_Y V_2 \ar [dl]_{\widetilde{p_2}} \ar[dr]^{\widetilde{s_1}} &&\\
& V_1 \ar [dl]_{p_1} \ar [dr]^{s_1} && V_2 \ar [dl]_{p_2} \ar[dr]^{s_2}\\
X & &  Y && Z }
$$
\item (Proper pushforward and smooth pushforward of cobordism bicycles) 
\begin{enumerate}
\item For a \emph{proper} map $f:X \to X'$, the (proper) pushforward (with respect to the first factor) $f_*:\Cal  Z^i(X,Y) \to \Cal  Z^i(X',Y)$ is defined by
$$f_*([X \xleftarrow{p} V  \xrightarrow{s} Y; L_1, L_2, \cdots, L_r]):= [X' \xleftarrow{f \circ p} V  \xrightarrow{s} Y; L_1, L_2, \cdots, L_r].$$
\item For a \emph{smooth} map $g:Y \to Y'$, the (smooth) pushforward (with respect to the second factor) ${}_*g:\Cal  Z^i(X,Y) \to \Cal  Z^{i+ \op{dim}g}(X,Y')$ is defined by
$$([X \xleftarrow{p} V  \xrightarrow{s} Y; L_1, L_2, \cdots, L_r]) \, {}_*g:= [X \xleftarrow{p} V  \xrightarrow{g \circ s} Y'; L_1, L_2, \cdots, L_r].$$
\end{enumerate}
\item (Smooth pullback and proper pullback  of cobordism bicycles) 
\begin{enumerate}
\item For a \emph{smooth} map $f:X' \to X$, the (smooth) pullback (with respect to the first factor) $f^*:\Cal  Z^i(X,Y) \to \Cal  Z^{i+\op{dim}f}(X',Y)$ is defined by
\begin{align*}
f^*([X \xleftarrow{p} V & \xrightarrow{s} Y; L_1, L_2, \cdots, L_r]) \\
& := [X' \xleftarrow{p'} X' \times_X V \xrightarrow{s \circ f'} Y; (f')^*L_1, (f')^*L_2, \cdots, (f')^*L_r].
\end{align*}
Here we consider the following commutative diagram:
$$\xymatrix{
& X' \times_X V \ar[dl]_{p'} \ar[r]^{\qquad f'} & V \ar [dl]^{p} \ar [dr]^{s}\\
X' \ar[r]_f & X &  & Y }
$$
\item For a \emph{proper} map $g:Y' \to Y$, the (proper) pullback (with respect to the second factor) ${}^*g:\Cal  Z^i(X,Y) \to \Cal  Z^i(X,Y')$ is defined by
\begin{align*}
([X \xleftarrow{p} V   & \xrightarrow{s} Y;  L_1, L_2, \cdots, L_r])\, {}^*g\\
& := [X \xleftarrow{p \circ g'} V \times _Y Y' \xrightarrow{s'} Y'; (g')^*L_1, (g')^*L_2, \cdots, (g')^*L_r].
\end{align*}
Here we consider the following commutative diagram:
$$\xymatrix{
& V \ar [dl]^{p} \ar [dr]_{s} & V \times _Y Y' \ar[l]_{g' \quad } \ar[dr]^{s'}\\
X &  & Y & Y' \ar[l]^g}
$$
\end{enumerate}

\end{enumerate}
\end{defn}
\begin{rem} As in the case of cobordism bicycles of vector bundles, the involvement of factors are exchanged in pushforward and pullback for proper morphisms and smooth morphisms, and also, in both pushforward and pullback, as long as smooth morphisms are involved, the grading is added by the relative dimension of the smooth morphism.
\end{rem}
\begin{rem} In the case of cobordism bicycles of vector bundles, we have two kind of products by Whitney sum and tensor product. However, in the present case  of cobordism bicycles, the product is a kind of ``Whitney sum'', or a ``mock'' Whitney sum, i.e., the sum of two finite sets of line bundles.
\end{rem}

Clearly we have the following proposition, as in the case of cobordism bicycles of vector bundles in \S 4. 
\begin{pro}\label{pro-bi-v}
The above three operations satisfy the following properties.
\begin{enumerate}
\item[($A_1$)] {\bf Product is associative}: For three varieties $X,Y,Z, W$ and $\alp \in \Cal   Z^i(X,Y)$, 
$\be \in \Cal   Z^j(Y,Z), \ga \in \Cal   Z^k(Z,W)$, we have
$$(\alp \bullet  \be) \bullet  \ga = \alp \bullet  (\be \bullet  \ga) \quad (\in \Cal   Z^{i+j+k}(X,W) )$$
\item[($A_2$)] {\bf Pushforward is functorial} : 
\begin{enumerate}
\item For two proper maps $f_1:X \to X', f_2:X' \to X''$ and 

$(f_1)_*:Z^i(X,Y) \to \Cal   Z^i(X',Y)$, 

$(f_2)_*:Z^i(X',Y) \to \Cal   Z^i(X'',Y)$, we have
$$(f_2 \circ f_1)_* = (f_2)_* \circ (f_1)_*.$$
\item For two smooth maps $g_1:Y \to Y', g_2:Y' \to Y''$ and 

${}_*(g_1):\Cal   Z^i(X,Y) \to \Cal   Z^{i+\op{dim}g_1}(X,Y')$,

${}_*(g_2):Z^{i+\op{dim}g_1} (X,Y') \to \Cal   Z^{i+\op{dim}g_1 + \op{dim}g_2}(X,Y'')$, we have
$${}_*(g_2 \circ g_1) = {}_*(g_1) \circ {}_*(g_2), \, \text{i.e., $\alp \, {}_*(g_2 \circ g_1) = (\alp \, {}_*(g_1) ) \, {}_*(g_2)$} $$
\end{enumerate}
\item[($A_2$)'] {\bf Proper pushforward and smooth pushforward commute}: For a proper map $f:X \to X'$ and a smooth map $g:Y \to Y'$ we have
$${}_*g \circ f_* = f_* \circ \, {}_*g,\,\,  \text{i.e.,} \, \, (f_*\alp)\, {}_*g = f_*(\alp\, {}_*g), \, \text{simply denoted by $f_* \alp\, {}_*g$} $$
i.e, the following diagram commutes:
$$\CD
\Cal   Z^i(X,Y) @> {f_*}>> \Cal   Z^i(X',Y)\\
@V {{}_*g} VV @VV {{}_*g} V\\
\Cal   Z^{i+\op{dim}g}(X,Y')  @>> {f_*} > \Cal   Z^{i+\op{dim}g}(X',Y') . \endCD
$$ 
\item[($A_3$)] {\bf Pullback is functorial}: 
\begin{enumerate}
\item For two smooth maps $f_1:X \to X', f_2:X' \to X''$ and 

$(f_1)^*:\Cal   Z^{i+\op{dim}f_2}(X',Y) \to \Cal   Z^{i+\op{dim}f_2 + \op{dim}f_1}(X,Y)$,

$(f_2)^*:\Cal   Z^i(X'',Y) \to \Cal   Z^{i+\op{dim}f_2}(X',Y)$, we have 
$$(f_2 \circ f_1)^* = (f_1)^* \circ (f_2)^*.$$
\item For two proper maps $g_1:Y \to Y', g_2:Y' \to Y''$ and 

${}^*(g_1):\Cal   Z^i(X,Y') \to \Cal   Z^i(X,Y)$,

${}^*(g_2):\Cal   Z^i(X,Y'') \to \Cal   Z^i(X,Y')$, we have
$${}^*(g_2 \circ g_1) = {}^*(g_2) \circ {}^*(g_1), \, \text{i.e., $\alp \, {}^*(g_2 \circ g_1) = (\alp \, {}^*(g_2) ) \, {}_*(g_1)$}$$
\end{enumerate}
\item[($A_3$)'] {\bf Proper pullback and smooth pullback commute}: For a smooth map $g:X' \to X$ and a proper map $f:Y' \to Y$ we have
$$g^*\circ \, {}^*f = {}^*f \circ g^*,\,\,  \text{i.e.,} \, \, g^*(\alp \, {}^*f) = (g^*\alp)\, {}^*f, \, \text{simply denoted by $g^* \alp\, {}^*f$} $$
i.e, the following diagram commutes:
i.e, the following diagram commutes:
$$\CD
\Cal   Z^i(X,Y) @> {{}^*f}>> \Cal   Z^i(X,Y')\\
@V {g^*} VV @VV {g^*} V\\
\Cal   Z^{i+\op{dim}g}(X',Y)  @>> {{}^*f} > \Cal   Z^{i+\op{dim}g}(X',Y') . \endCD
$$ 
\item[($A_{12}$)] {\bf Product and pushforward commute}: For $\alp \in \Cal   Z^i(X,Y)$ and $\be \in \Cal   Z^j(Y,Z)$.
\begin{enumerate} 
\item For a proper morphism $f:X \to X'$, 
$$f_*(\alp \bullet \be) = (f_*\alp) \bullet \be \quad (\in \Cal   Z^{i+j}(X',Z)),$$
i.e., (for the sake of clarity we write down) the following diagram commutes:
$$\CD
\Cal Z^i(X,Y) \otimes \Cal  Z^j(Y,Z) @> {\bullet}>> \Cal  Z^{i+j}(X,Z)\\
@V {f_* \otimes \op{id}_{\Cal  Z^j(Y,Z)}} VV @VV {f}_* V\\
\Cal  Z^i(X',Y) \otimes \Cal  Z^j(Y,Z) @>> {\bullet} > \Cal  Z^{i+j}(X',Z) . \endCD
$$ 
\item For a smooth morphism $g:Z \to Z'$, 
$$(\alp \bullet \be)\, {}_*g = \alp \bullet \be \, {}_*g \quad (\in \Cal   Z^{i+j+\op{dim}g}(X,Z')),$$
i.e., the following diagram commutes:
$$\CD
\Cal  Z^i(X,Y)\otimes \Cal  Z^j(Y,Z) @> {\bullet}>> \Cal   Z^{i+j}(X,Z) \\
@V {\op{id}_{\Cal  Z^i(X,Y)} \otimes \,\, {}_*g} VV @VV {{}_*g} V\\
\Cal Z^i(X,Y)\otimes \Cal  Z^{j+\op{dim}g}(Y,Z') @>> {\bullet} > \Cal   Z^{i+j+\op{dim}g}(X,Z') . \endCD
$$ 
\end{enumerate}
\item[($A_{13}$)] {\bf Product and pullback commute}: For  $\alp \in \Cal   Z^i(X,Y)$ and $\be \in \Cal   Z^j(Y,Z)$.
 \begin{enumerate} 
\item For a smooth morphism $f:X' \to X$, 
$$f^*(\alp \bullet \be) = (f^*\alp) \bullet \be \quad (\in \Cal   Z^{i+j+\op{dim}f}(X',Z)),$$
i.e., the following diagram commutes:
$$\CD
\Cal Z^i(X,Y) \otimes \Cal  Z^j(Y,Z) @> {\bullet}>> \Cal  Z^{i+j}(X,Z)\\
@V {f^* \otimes \op{id}_{\Cal  Z^j(Y,Z)}} VV @VV {f^*} V\\
\Cal  Z^{i+\op{dim}f}(X',Y) \otimes \Cal  Z^j(Y,Z) @>> {\bullet} > \Cal  Z^{i+j+\op{dim}f}(X',Z) . \endCD
$$
\item For a proper morphism $g:Z' \to Z$, 
$$(\alp \bullet  \be)\, {}^*g = \alp \bullet  \be \, {}^*g \quad (\in \Cal   Z^{i+j}(X,Z')),$$
i.e., the following diagram commutes:
$$\CD
\Cal  Z^i(X,Y)\otimes \Cal  Z^j(Y,Z) @> {\bullet}>> \Cal   Z^{i+j}(X,Z) \\
@V {\op{id}_{\Cal  Z^i(X,Y)} \otimes \, \, {}^*g} VV @VV {{}^*g} V\\
\Cal Z^i(X,Y)\otimes \Cal  Z^j(Y,Z') @>> {\bullet} > \Cal   Z^{i+j}(X,Z') . \endCD
$$ 
\end{enumerate}
\item[($A_{23}$)] {\bf Pushforward and pullback commute}: For $\alp \in \Cal   Z^i(X,Y)$
 \begin{enumerate} 
\item (proper pushforward and proper pullback commute) For proper morphisms $f:X \to X'$ and $g:Y' \to Y$, 
$$(f_*\alp)\, {}^*g = f_*(\alp \, {}^*g) \quad (\in \Cal   Z^i(X',Y')), \, \text{simply denoted by $f_* \alp\, {}^*g$} $$
i.e., the following diagram commutes:
$$\CD
\Cal   Z^i(X,Y) @> {f_*}>> \Cal Z(X',Y)\\
@V {{}^*g} VV @VV {{}^*g} V\\
\Cal Z^i(X,Y')  @>> {f_*} > \Cal Z^i(X',Y') . \endCD
$$ 
\item (smooth pushforward and smooth pullback commute)For smooth morphisms $f:X' \to X$ and $g:Y \to Y'$, 
$$f^*(\alp \, {}_*g)= (f^*\alp)\, {}_*g \quad (\in \Cal   Z^{i+\op{dim}f + \op{dim}g}(X',Y')), \, \text{simply denoted by $f^* \alp\, {}_*g$} $$
i.e., the following diagram commutes:
$$\CD
\Cal   Z^i(X,Y) @> {{}_*g }>> \Cal   Z^{i+\op{dim}g}(X,Y') \\
@V {f^*} VV @VV {f^*} V\\
\Cal Z^{i+\op{dim}f} (X',Y) @>> {{}_*g } > \Cal  Z^{i+\op{dim}f+\op{dim}g}(X',Y') . \endCD
$$ 
\item (proper pushforward and smooth pullback ``commute" in the following sense) For the following fiber square
 $$\CD
\widetilde X @> {\widetilde f}>> X''\\
@V {\widetilde g} VV @VV {g} V\\
X' @>> {f} > X \endCD
$$ 
with $f$ proper and $g$ smooth, we have
$$g^*f_* = \widetilde f_* \widetilde g^*,$$
i.e., the following diagram commutes:
$$\CD
\Cal   Z^i(X',Y)@> {f_*}>> \Cal   Z^i(X,Y)\\
@V {\widetilde g^*} VV @VV {g^*} V\\
\Cal Z^{i+\op{dim}g}(\widetilde X,Y)  @>> {\widetilde f_*} > \Cal  Z^{i+\op{dim}g}(X'',Y) . \endCD
$$
(Note that $\op{dim}\widetilde g = \op{dim}g$.)

\item (smooth pushforward and proper pullback ``commute" in the following sense) For the following fiber square
 $$\CD
\widetilde Y @> {\widetilde f}>> Y''\\
@V {\widetilde g} VV @VV {g} V\\
Y' @>> {f} > Y \endCD
$$ 
with $f$ proper and $g$ smooth, we have
$$(\alp \, {}_*g)\, {}^*f = (\alp \, {}^*\widetilde f)\, {}_*\widetilde g,$$
i.e., the following diagram commutes:
$$\CD
\Cal  Z^i(X,Y'')@> {{}_*g}>> \Cal   Z^{i+\op{dim}}(X,Y) \\
@V {{}^*\widetilde f} VV @VV {{}^*f} V\\
\Cal Z^i(X,\widetilde Y)  @>> {{}_*\widetilde g} > \Cal  Z^{i+\op{dim}g}(X,Y') . \endCD
$$
(Note that $\op{dim}\widetilde g = \op{dim}g$.)
\end{enumerate}
\item[($A_{123}$)] {\bf ``Projection formula"}: 
\begin{enumerate}
\item For a smooth morphism $g:Y \to Y'$ and $\alp \in \Cal   Z^i(X,Y)$ and $\be \in \Cal   Z^j(Y', Z)$,
$$(\alp \, {}_*g) \bullet \be = \alp \bullet g^*\be \quad (\in \Cal   Z^{i+j+ \op{dim}g}(X,Z)) $$
i.e., the following diagram commutes:
$$\CD
\Cal  Z^i(X,Y)^+ \otimes \Cal Z^j(Y',Z) @> {\op{id}_{\Cal  Z^i(X,Y)} \otimes  g^*}>> \Cal  Z^i(X,Y)\otimes \Cal   Z^{j+\op{dim}g}(Y,Z) \\
@V {{}_*g \otimes \op{id}_{\Cal Z^j(Y,Z)}} VV @VV {\bullet} V\\
\Cal   Z^{i+\op{dim}g}(X,Y') \otimes \Cal Z^j(Y',Z) @>> {\bullet} > \Cal  Z^{i+j+\op{dim}g}(X,Z)  . \endCD
$$ 
\item For a proper map $g:Y' \to Y$, $\alp \in \Cal   Z^i(X,Y)$ and $\be \in \Cal   Z^j(Y', Z)$,
$$(\alp \, {}^*g) \bullet \be = \alp \bullet g_*\be \quad (\in \Cal   Z^{i+j}(X,Z)) ,$$
the following diagram commutes:
$$\CD
\Cal  Z^i(X,Y) \otimes \Cal Z^j(Y',Z) @> {\op{id}_{\Cal  Z^i(X,Y)} \otimes  g_*}>> \Cal  Z^i(X,Y) \otimes \Cal  Z^j(Y,Z)  \\
@V {{}^*g \otimes \op{id}_{\Cal   Z^j(Y',Z)}} VV @VV {\bullet} V\\
\Cal  Z^i(X,Y') \otimes \Cal  Z^j(Y',Z) @>> {\bullet} > \Cal   Z^{i+j}(X,Z)  . \endCD
$$ 
\end{enumerate}
\end{enumerate}
\end{pro}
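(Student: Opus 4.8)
The plan is to imitate, essentially verbatim, the proof of Proposition~\ref{proposition1}: here the single product $\bullet$ plays the role of $\bullet_{\oplus}$ and a finite tuple $(L_1,\dots,L_r)$ plays the role of a vector bundle, concatenation of tuples behaving under pullback exactly as a Whitney sum does, so each of the nine identities $(A_1)$--$(A_{123})$ reduces to the same diagram chase as in \S4. Every operation is defined on generators and extended $\bZ$-linearly, and since disjoint union is compatible with composition of spans, with proper and smooth (co)base change, and with restriction of line bundles, all three operations respect the additive relation of Definition~\ref{def1} and descend to the Grothendieck group; hence it suffices to verify each identity on isomorphism classes of single cobordism bicycles.

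For each identity I would split the verification into a ``span part'' and a ``bundle part''. The span part---that the two sides carry the same underlying proper--smooth correspondence $X \leftarrow V \to Y$---is literally the correspondence component of the corresponding identity in Proposition~\ref{proposition1} (no bundle enters there), and it rests only on associativity of composition of spans, on the stability of proper maps and of smooth maps (with their relative dimensions) under base change, and on the pasting lemma for fibre squares. For the bundle part one checks that the tuples attached to the two sides agree in the sense of Definition~\ref{def1}, i.e.\ up to a bijection $\sigma$ of index sets and isomorphisms $L_i\cong h^*L_{\sigma(i)}$, where $h$ is the canonical identification of sources furnished by the span part; this follows from functoriality of pullback of line bundles, $(k\circ h)^*L\cong h^*k^*L$, together with its compatibility with fibre squares. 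In every case at hand the tuples occur on both sides \emph{in the same order}, so $\sigma$ may be taken to be the identity and the needed isomorphisms are the canonical ones.

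As representative cases I would write out $(A_1)$ and the second projection formula, and assert that the rest are identical to their counterparts in \S4. For $(A_1)$, both $(\alp\bullet\be)\bullet\ga$ and $\alp\bullet(\be\bullet\ga)$ have underlying span $X \leftarrow V_1\times_Y V_2\times_Z V_3 \to W$ via the canonical identification of the two iterated fibre products, and on either side the attached tuple is, in the same order, the pullback along the projections of the $L$'s from $V_1$, the $M$'s from $V_2$ and the $N$'s from $V_3$; functoriality of pullback identifies the two. For $(A_{123})$(b), with $g\colon Y'\to Y$ proper, the pasting lemma gives $(V\times_Y Y')\times_{Y'}W\cong V\times_Y W$, and under this identification both $(\alp\,{}^*g)\bullet\be$ and $\alp\bullet g_*\be$ acquire the span $X \leftarrow V\times_Y W \to Z$ together with the concatenation of the pullbacks of the $L$'s from $V$ and of the $M$'s from $W$; this is precisely the chase already performed for $\bullet_{\oplus}$ in the proof of Proposition~\ref{proposition1}, with $\oplus$ replaced by concatenation. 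The identities $(A_2)$, $(A_2)'$, $(A_3)$, $(A_3)'$, $(A_{12})$, $(A_{13})$, $(A_{23})$(a)--(d) and $(A_{123})$(a) are transcribed in the same manner.

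The one point deserving genuine care---and where I expect to spend most of the effort---is the degree bookkeeping: one must verify that each operation lands in the degree recorded in Definition~\ref{ppp of cbc}, that is, that the relation $-i+r=\op{dim}s$ of Definition~\ref{def1} transforms correctly under composition (where the relative dimensions of the smooth legs add), under proper push/pull (where it is unchanged), and under smooth push/pull (where it shifts by the relevant relative dimension). This is the same count as in Proposition~\ref{proposition1}, rewritten through the substitution $m=\op{dim}s$, $i=r-m$; apart from it, the proof is a routine transcription of the arguments of \S4.
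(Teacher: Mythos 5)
Your proposal is correct and is essentially the paper's own argument: the paper gives no separate proof of this proposition, simply asserting that it holds ``as in the case of cobordism bicycles of vector bundles in \S 4,'' i.e.\ by transcribing the proof of Proposition \ref{proposition1} with $\bullet_{\oplus}$ replaced by concatenation of tuples of line bundles, which is exactly your plan (span part plus bundle part, checked on generators and descending to the Grothendieck group). Your representative cases ($A_1$ and $(A_{123})$(b)) and the degree bookkeeping via $i=r-\op{dim}s$ cover the same ground as the sample verifications the paper writes out for Proposition \ref{proposition1}.
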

\begin{rem} $\jeden_X:=[X \xleftarrow {\op{id}_X} X \xrightarrow {\op{id}_X} X] \in \Cal   Z^0(X,X)$ satisfies that $\jeden_X \bullet \alp = \alp$ for any element $\alp \in \Cal   Z^*(X, Y)$ and $\be \bullet \jeden_X = \be$ for any element $\be \in \Cal   Z^*(Y, X)$.
\end{rem} 
The following fact is emphasized for a later use.
\begin{lem}(Pushforward-Product Property for Units (abbr. PPPU))\label{pppu} For the following fiber square
$$\xymatrix{
&& V\times_Y W\ar [dl]_{\widetilde{p}} \ar[dr]^{\widetilde{s}} &&\\
& V \ar [dr]_{s} && W \ar [dl]^{p} \\
 & &  Y && }
$$
with $s:V \to Y$ smooth and $p:W \to Y$ proper, we have
$$(\jeden_V \,{}_*s) \bullet p_*\jeden_W = (\widetilde p_* \jeden_{V\times_Y W})\, {}_*\widetilde s =  \widetilde p_* ( \jeden_{V\times_Y W} \, {}_*\widetilde s ) \in \Cal  Z(V,W).$$
Here we have the homomorphisms
$${}_*s:\Cal  Z^*(V,V) \to \Cal  Z^*(V,Y), \quad p_*:\Cal  Z^*(W,W) \to \Cal  Z^*(Y,W)$$
and the following commutative diagram
$$\xymatrix{
&& \Cal  Z^*(V\times_Y W, V\times_Y W) \ar [dl]_{\widetilde{p}_*} \ar[dr]^{{}_*\widetilde{s}} &&\\
& \Cal  Z^*(V,V\times_Y W)  \ar [dr]_{{}_*\widetilde{s}} && \Cal  Z^*(V\times_Y W, W) \ar [dl]^{\widetilde{p}_*} \\
 & &  \Cal  Z^*(V,W). && }
$$
\end{lem}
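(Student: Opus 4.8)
The plan is to prove both equalities by directly unwinding the definitions of the three operations from Definition \ref{ppp of cbc} on the unit classes involved, and then observing that all three expressions produce literally the same cobordism bicycle, namely the one attached to the given fiber square.

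First I would compute the left-hand side. Smooth pushforward along $s:V\to Y$ turns the unit into $\jeden_V\,{}_*s=[V \xleftarrow{\op{id}_V} V \xrightarrow{s} Y]$, while proper pushforward along $p:W\to Y$ gives $p_*\jeden_W=[Y \xleftarrow{p} W \xrightarrow{\op{id}_W} W]$. Feeding these two bicycles into the product of Definition \ref{ppp of cbc}, the auxiliary fiber product appearing there is exactly $V\times_Y W$ with its two projections; since $p$ is proper its base change $\widetilde p:V\times_Y W\to V$ is proper, and since $s$ is smooth its base change $\widetilde s:V\times_Y W\to W$ is smooth. As there are no line bundles to carry along and the two composite legs are $\op{id}_V\circ\widetilde p=\widetilde p$ and $\op{id}_W\circ\widetilde s=\widetilde s$, we obtain
$$(\jeden_V\,{}_*s)\bullet p_*\jeden_W=[V \xleftarrow{\widetilde p} V\times_Y W \xrightarrow{\widetilde s} W].$$

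Next I would compute the right-hand side in both orders. Proper pushforward along $\widetilde p$ applied to the unit of $V\times_Y W$ gives $\widetilde p_*\jeden_{V\times_Y W}=[V \xleftarrow{\widetilde p} V\times_Y W \xrightarrow{\op{id}} V\times_Y W]$, and then smooth pushforward along $\widetilde s$ yields $[V \xleftarrow{\widetilde p} V\times_Y W \xrightarrow{\widetilde s} W]$, which is exactly the bicycle found above. Symmetrically, $\jeden_{V\times_Y W}\,{}_*\widetilde s=[V\times_Y W \xleftarrow{\op{id}} V\times_Y W \xrightarrow{\widetilde s} W]$, and applying $\widetilde p_*$ to it produces the same bicycle once more; alternatively, the equality $(\widetilde p_*\jeden_{V\times_Y W})\,{}_*\widetilde s=\widetilde p_*(\jeden_{V\times_Y W}\,{}_*\widetilde s)$ is simply the instance of ($A_2$)' in Proposition \ref{pro-bi-v}, with $\widetilde p$ acting on the source factor and $\widetilde s$ on the target factor. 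This establishes the chain of equalities, and the commutativity of the displayed diagram of $\Cal Z^*$-groups is just a repackaging of these same computations.

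I do not expect a genuine obstacle here: the only thing demanding care is the bookkeeping of which factor each operation acts on — proper pushforward on the first (source) factor $V$, smooth pushforward on the second (target) factor $W$ — together with the observation that the fiber product used to define $\bullet$ in Definition \ref{ppp of cbc} coincides on the nose with the square in the statement, so that no additional identification of spaces is needed. The degree bookkeeping takes care of itself once relative dimensions are tracked, and I would only note in passing that all three expressions land in the same graded piece of $\Cal Z(V,W)$.
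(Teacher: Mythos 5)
Your computation is correct and is exactly the intended argument: the paper states Lemma \ref{pppu} without proof precisely because, as you show, all three expressions unwind by Definition \ref{ppp of cbc} to the single bicycle $[V \xleftarrow{\widetilde p} V\times_Y W \xrightarrow{\widetilde s} W]$, with the equality of the two right-hand expressions also being an instance of ($A_2$)'. Your bookkeeping of which factor each operation acts on, and of the properness/smoothness of the base-changed maps, matches the paper's conventions.
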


In the same way as in Levine--Morel's algebraic cobordism, we define Chern operators as follows:
\begin{defn} For a line bundle $L$ over $X$ or a line bundle $M$ over $Y$, we first define the following Chern classes:
$$c_1(L):=[X \xleftarrow {\op{id}_X} X \xrightarrow {\op{id}_X} X; L] \in \Cal Z^1(X,X),$$
$$c_1(M):=[Y \xleftarrow {\op{id}_Y} Y \xrightarrow {\op{id}_Y} Y; M] \in \Cal Z^1(Y,Y).$$
Then the ``Chern class operators" 
$$c_1(L) \bullet: \Cal Z^i(X,Y) \to \Cal Z^{i+1}(X,Y), \quad \bullet \, c_1(M): \Cal Z^i(X,Y) \to \Cal Z^{i+1}(X,Y)$$
are respectively defined by
$$c_1(L) \bullet ([X \xleftarrow {p} V \xrightarrow {s} Y; L_1, L_2, \cdots,L_r]) =[X \xleftarrow {p} V \xrightarrow {s} Y; L_1, L_2, \cdots,L_r, p^*L],$$
$$([X \xleftarrow {p} V \xrightarrow {s} Y; L_1, L_2, \cdots,L_r]) \bullet c_1(M) =[X \xleftarrow {p} V \xrightarrow {s} Y; L_1, L_2, \cdots,L_r, s^*M].$$
\end{defn} 
\begin{lem}\label{ch-op} The above Chern class operators satisfy the following properties.
\begin{enumerate}
\item (identity): If $L$ and $L'$ are line bundles over $X$ and isomorphic and if $M$ and $M'$ are line bundles over $Y$ and isomorphic, then we have
$$c_1(L) \bullet  = c_1(L') \bullet: \Cal Z^i(X,Y) \to \Cal Z^{i+1}(X,Y),$$
$$ \bullet \, c_1(M) =\bullet \, c_1(M') : \Cal Z^i(X,Y) \to \Cal Z^{i+1}(X,Y).$$
\item (commutativity): If $L$ and $L'$ are line bundles over $X$ and if $M$ and $M'$ are line bundles over $Y$, then we have
$$c_1(L) \bullet c_1(L') \bullet  = c_1(L') \bullet c_1(L) \bullet  :\Cal Z^i(X,Y) \to \Cal Z^{i+2}(X,Y),$$
$$\bullet c_1(M)\bullet c_1(M')= \bullet c_1(M')\bullet c_1(M) : \Cal Z^i(X,Y) \to \Cal Z^{i+2}(X,Y).$$
\item (compatibility with product) Let $L$ be a line bundle over $X$ and $N$ be a line bundle over $Z$. For $\alp \in \Cal Z^i(X,Y)$ and $\be \in \Cal Z^j(Y,Z)$, we have
 $$ c_1(L) \bullet (\alp \bullet \be) = \Bigl (c_1(L) \bullet \alp \Bigr ) \bullet \be,$$
 $$(\alp \bullet \be) \bullet c_1(N) = \alp \bullet \Bigl (\be \bullet c_1(N) \Bigr ).$$
\item (compatibility with pushforward = ``projection formula") (which is ``similar" to \cite[Theorem 3.2, (c) (Projection formula)]{Fulton-book}): For a proper map $f:X \to X'$ and a line bundle $L$ over $X'$ and for a smooth map $g:Y \to Y'$ and a line bundle $M$ over $Y'$ we have that for $\alp \in \Cal Z^i(X,Y)$
$$ f_*\bigl (c_1(f^*L) \bullet \alp) \bigr) = c_1(L) \bullet f_*\alp, \,  \bigl  (\alp \bullet c_1(g^*M) \bigr) \, {}_*g = \alp \, {}_*g \bullet  c_1(M).$$
\item (compatibility with pullback =``pullback formula")(which is ``similar" to \cite[Theorem 3.2, (d) (Pull-back)]{Fulton-book}): For a smooth map $f:X' \to X$ and a line bundle $L$ over $X$ and for a proper map $g:Y' \to Y$ and a line bundle $M$ over $Y$ we have that for $\alp \in \Cal Z^i(X,Y)$
$$ f^*\bigl (c_1(L) \bullet \alp \bigr) = c_1(f^*L) \bullet f^*\alp, \, \bigl (\alp \bullet c_1(M) \bigr) \, {}^*g= \alp \, {}^*g \bullet c_1(g^*M).$$
\end{enumerate}
\end{lem}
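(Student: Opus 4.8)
The plan is to reduce the entire lemma to three elementary facts: that the ``Chern class operator'' $c_1(L)\bullet$ coincides with the bivariant product by the class $c_1(L)\in\Cal Z^1(X,X)$, and dually $\bullet\,c_1(M)$ with the product by $c_1(M)\in\Cal Z^1(Y,Y)$; that a cobordism bicycle is unchanged both when a line bundle in its tuple is replaced by an isomorphic one and when its tuple is permuted (the data $h$ and $\si$ in the definition of isomorphism); and that pullback of line bundles is functorial, $(\varphi\circ\psi)^*\cong\psi^*\varphi^*$.

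First I would verify the identification of the operators with products by a direct unwinding of the product in Definition \ref{ppp of cbc}. Taking the first factor to be the identity span $X\xleftarrow{\op{id}_X}X\xrightarrow{\op{id}_X}X$ carrying the single bundle $L$ and the second to be $\alp=[X\xleftarrow p V\xrightarrow s Y;L_1,\dots,L_r]$, the fibre product is $X\times_X V\cong V$, the two induced legs are $p$ and $\op{id}_V$, and the attached line bundles become $p^*L$ together with $\op{id}_V^*L_i=L_i$; hence $c_1(L)\bullet\alp=[X\xleftarrow p V\xrightarrow s Y;L_1,\dots,L_r,p^*L]$, which is exactly the operator $c_1(L)\bullet$. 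Symmetrically, placing $c_1(M)$ in the second slot attaches $s^*M$. Granting this, part (3) becomes precisely associativity ($A_1$) of Proposition \ref{pro-bi-v} applied with an identity span in the first slot (for $c_1(L)\bullet(\alp\bullet\be)$) or in the last slot (for $(\alp\bullet\be)\bullet c_1(N)$). Part (2) then follows either the same way, once one records that $c_1(L)\bullet c_1(L')$ and $c_1(L')\bullet c_1(L)$ are the two cobordism bicycles $[X\xleftarrow{\op{id}_X}X\xrightarrow{\op{id}_X}X;L,L']$ and $[X\xleftarrow{\op{id}_X}X\xrightarrow{\op{id}_X}X;L',L]$, equal by the permutation $\si$, or directly, since $c_1(L)\bullet c_1(L')\bullet\alp$ and $c_1(L')\bullet c_1(L)\bullet\alp$ differ only in the order of $p^*L,p^*L'$ in the tuple; the $Y$-side assertion is identical with $s^*M,s^*M'$. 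Part (1) is then immediate: $L\cong L'$ over $X$ gives $p^*L\cong p^*L'$ over $V$, and a cobordism bicycle depends on its line bundles only through their isomorphism classes (take $h=\op{id}_V$), and dually for $M\cong M'$ over $Y$.

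For parts (4) and (5) I would simply compute both sides from Definition \ref{ppp of cbc}, the only non-formal input being functoriality of line-bundle pullback across the commuting fibre squares defining $f^*$ and ${}^*g$. For proper $f\colon X\to X'$ and $L$ over $X'$, both $f_*\bigl(c_1(f^*L)\bullet\alp\bigr)$ and $c_1(L)\bullet f_*\alp$ come out to $[X'\xleftarrow{f\circ p}V\xrightarrow s Y;L_1,\dots,L_r,(f\circ p)^*L]$ once one invokes $p^*f^*L\cong(f\circ p)^*L$; for smooth $g\colon Y\to Y'$ and $M$ over $Y'$ one uses $s^*g^*M\cong(g\circ s)^*M$ in the same fashion. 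For smooth $f\colon X'\to X$ and $L$ over $X$, writing $f'\colon X'\times_X V\to V$ and $p'\colon X'\times_X V\to X'$ for the two projections, the key identity is $(f')^*p^*L\cong(p\circ f')^*L=(f\circ p')^*L\cong(p')^*f^*L$, after which the two sides of $f^*\bigl(c_1(L)\bullet\alp\bigr)=c_1(f^*L)\bullet f^*\alp$ agree; for proper $g\colon Y'\to Y$ and $M$ over $Y$, writing $g'\colon V\times_Y Y'\to V$ and $s'\colon V\times_Y Y'\to Y'$, one uses $(g')^*s^*M\cong(s\circ g')^*M=(g\circ s')^*M\cong(s')^*g^*M$ to identify the two sides of $\bigl(\alp\bullet c_1(M)\bigr){}^*g=\bigl(\alp{}^*g\bigr)\bullet c_1(g^*M)$.

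I do not expect a genuine obstacle: the lemma is formal once the identification of the operators with products is in place, since that is precisely what turns the multiplicative assertions (2) and (3) into instances of ($A_1$) rather than separate computations. The only thing needing real attention is bookkeeping — matching the various fibre squares occurring in the definitions of the product, of $f^*$, and of ${}^*g$, and keeping straight whether a given Chern class is being pulled back along the proper leg $p$ or the smooth leg $s$ of a span.
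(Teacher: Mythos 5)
Your proposal is correct, and for parts (1)--(3) it supplies routine verifications that the paper simply omits (the paper's proof begins ``We show only (4) and (5)''). Where you genuinely diverge is in (4) and (5): you unwind both sides from Definition \ref{ppp of cbc} and finish with functoriality of line-bundle pullback across the relevant commutative squares ($p^*f^*L\cong(f\circ p)^*L$, $(f')^*p^*L\cong(p')^*f^*L$, etc.), whereas the paper argues formally from the already-established properties of Proposition \ref{pro-bi-v}: it first records the exchange identities $f_*c_1(f^*L)=c_1(L)\,{}^*f$, $c_1(g^*M)\,{}_*g=g^*c_1(M)$, $c_1(f^*L)\,{}_*f=f^*c_1(L)$, $c_1(M)\,{}^*g=c_1(g^*M)\,{}_*g$ (whose proof is the only definitional unwinding, of the same flavor as yours), and then chains $A_{12}$, $A_{13}$ and the projection formula $A_{123}$ to get, e.g., $f_*(c_1(f^*L)\bullet\alp)=f_*c_1(f^*L)\bullet\alp=c_1(L)\,{}^*f\bullet\alp=c_1(L)\bullet f_*\alp$. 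Both arguments rest on your (correct) identification of the operators $c_1(L)\bullet$ and $\bullet\,c_1(M)$ with bivariant products by the classes in $\Cal Z^1(X,X)$ and $\Cal Z^1(Y,Y)$, an identification the paper uses tacitly when it applies $A_{12}$ to $c_1(f^*L)\bullet\alp$; your version makes it explicit, which is a plus. The trade-off: your direct computation is shorter and self-contained, while the paper's derivation isolates the only geometric input in the small identities and shows (4)--(5) follow purely from the bi-variant axioms plus those identities, a structural pattern the paper reuses (together with PPPU and PPU) in the proof of the universality theorem. One bookkeeping point to note in your write-up: the product definition attaches the first factor's bundles at the front of the tuple, so your formula $c_1(L)\bullet\alp=[X\xleftarrow{p}V\xrightarrow{s}Y;L_1,\dots,L_r,p^*L]$ agrees with it only after invoking the permutation $\si$ allowed in the isomorphism of cobordism bicycles --- which you do have available, so this is cosmetic rather than a gap.
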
 
\begin{proof} We show only (4) and (5).
First we observe that as to (4) 
\begin{equation}\label{f4} 
f_*c_1(f^*L) = c_1(L)\,{}^*f, \quad c_1(g^*M)\, {}_*g =g^*c_1(M).
\end{equation}
and as to (5)
\begin{equation}\label{f5} 
c_1(f^*L) \, {}_*f = f^*c_1(L), \quad c_1(g^*M)\, {}_*g =c_1(M)\, {}^*g.
\end{equation}
Indeed, the first one of (\ref{f4}) can be seen as follows:
\begin{align*}
f_*c_1(f^*L) & = f_*([X \xleftarrow {\op{id}_X} X \xrightarrow {\op{id}_X} X; f^*L])\\
& = [X' \xleftarrow f X \xrightarrow {\op{id}_X} X; f^*L]\\
& = [X' \xleftarrow {\op{id}_{X'}} X' \xrightarrow {\op{id}_{X'}} X'; L] \,{}^*f \quad \text {(by the definition of ${}^*f$)}\\
& = c_1(L)\,{}^*f
\end{align*}
 Similarly we can see the other three equalities, which are left to the reader.
As to (4), we show the first one: 
\begin{align*}
f_*\bigl (c_1(f^*L) \bullet \alp) \bigr) & = f_* c_1(f^*L) \bullet \alp \quad \text {(by $A_{12}$ (a) )} \\
& = c_1(L)\,{}^*f \bullet \alp \quad \text{(by (\ref{f4})}\\
& = c_1(L) \bullet f_*\alp \quad \text{(by $A_{123}$ (Projection formula) (a) )}
\end{align*}

As to (5), we show the second one:
\begin{align*}
\bigl (\alp \bullet c_1(M) \bigr) \, {}^*g & = \alp \bullet c_1(M)\, {}^*g \quad \text {(by $A_{13}$ (b) )} \\
& = \alp \bullet c_1(g^*M)\, {}_*g\quad \text{(by (\ref{f5})}\\
& = \alp \, {}^*g \bullet c_1(g^*M) \quad \text{(by $A_{123}$ (Projection formula) (b))}
\end{align*}
\end{proof}
As to the compatibility with pullback, we observe the following fact concerning the unit:
\begin{lem}[Pullback Property for Unit (abbr. PPU)]\label{ppu}  For a smooth map $f:X' \to X$ and a line bundle $L$ over $X$ and for a proper map $g:Y' \to Y$ and a line bundle $M$ over $Y$ we have that 
for $\jeden_X \in \Cal Z^i(X,X)$ and $\jeden_Y \in \Cal Z^i(Y,Y)$
$$c_1(f^*L) \bullet (f^*\jeden_X) = (f^*\jeden_X) \bullet c_1(L) \in \Cal Z^{i+\op{dim}f+1}(X',X),$$
$$(\jeden_Y \, {}^*g) \bullet c_1(g^*M) = c_1(M) \bullet (\jeden_Y \, {}^*g) \in \Cal Z^{i+1}(Y,Y'). \qquad $$
\end{lem}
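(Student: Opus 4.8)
The plan is to prove each of the two identities not by a head-on computation but by exhibiting a single element that both sides equal. For the first identity that element will be the smooth pullback $f^{*}c_{1}(L)\in\Cal Z^{1+\op{dim}f}(X',X)$, and for the second it will be the proper pullback $c_{1}(M)\,{}^{*}g\in\Cal Z^{1}(Y,Y')$. The two ingredients I would use are the pullback compatibility of the Chern class operators (Lemma \ref{ch-op}(5)) and the commutation of product with pullback ($A_{13}$) of Proposition \ref{pro-bi-v}, together with the fact that $\jeden_{X}$ and $\jeden_{Y}$ are two-sided units for $\bullet$.

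For the first identity I would first apply the first equation of Lemma \ref{ch-op}(5) with $Y$ taken to be $X$ and $\alp=\jeden_{X}\in\Cal Z^{0}(X,X)$; since $c_{1}(L)\bullet\jeden_{X}=c_{1}(L)$ this yields $f^{*}c_{1}(L)=c_{1}(f^{*}L)\bullet(f^{*}\jeden_{X})$. Then I would apply ($A_{13}$)(a) to $\alp=\jeden_{X}$ and $\be=c_{1}(L)\in\Cal Z^{1}(X,X)$; since $\jeden_{X}\bullet c_{1}(L)=c_{1}(L)$ this yields $f^{*}c_{1}(L)=(f^{*}\jeden_{X})\bullet c_{1}(L)$. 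Comparing the two expressions for $f^{*}c_{1}(L)$ gives the first assertion. For the second identity I would argue symmetrically: the second equation of Lemma \ref{ch-op}(5) with $\alp=\jeden_{Y}\in\Cal Z^{0}(Y,Y)$ and $\jeden_{Y}\bullet c_{1}(M)=c_{1}(M)$ gives $c_{1}(M)\,{}^{*}g=(\jeden_{Y}\,{}^{*}g)\bullet c_{1}(g^{*}M)$, while ($A_{13}$)(b) applied to $\alp=c_{1}(M)\in\Cal Z^{1}(Y,Y)$, $\be=\jeden_{Y}$, and the proper map $g$, together with $c_{1}(M)\bullet\jeden_{Y}=c_{1}(M)$, gives $c_{1}(M)\,{}^{*}g=c_{1}(M)\bullet(\jeden_{Y}\,{}^{*}g)$; comparing the two expressions for $c_{1}(M)\,{}^{*}g$ gives the second assertion.

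I do not expect a real obstacle: the only thing to be careful about is the bookkeeping of gradings and of which occurrence of $\jeden$ is playing the role of a left unit and which of a right unit (and in which group $\Cal Z^{*}(-,-)$ it lives), and matching the smooth/proper hypotheses of ($A_{13}$)(a) and ($A_{13}$)(b) to the directions of $f$ and $g$. As a consistency check one can also verify both identities directly from Definition \ref{ppp of cbc}: unwinding the definitions gives $f^{*}\jeden_{X}=[X'\xleftarrow{\op{id}_{X'}}X'\xrightarrow{f}X]$ and $\jeden_{Y}\,{}^{*}g=[Y\xleftarrow{g}Y'\xrightarrow{\op{id}_{Y'}}Y']$, and then both sides of the first identity evaluate to $[X'\xleftarrow{\op{id}_{X'}}X'\xrightarrow{f}X;\,f^{*}L]$ and both sides of the second to $[Y\xleftarrow{g}Y'\xrightarrow{\op{id}_{Y'}}Y';\,g^{*}M]$. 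The structural argument is preferable, however, since it exhibits PPU as a formal consequence of results already established.
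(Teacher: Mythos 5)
Your argument is correct, but it takes a genuinely different route from the paper. The paper proves Lemma \ref{ppu} by a direct computation: it unwinds Definition \ref{ppp of cbc} to get $f^*\jeden_X=[X' \xleftarrow{\op{id}_{X'}} X' \xrightarrow{f} X]$ and checks that both sides of the first identity equal $[X' \xleftarrow{\op{id}_{X'}} X' \xrightarrow{f} X; f^*L]$, leaving the proper case as analogous --- i.e.\ exactly what you relegate to a ``consistency check'' at the end. Your main argument instead exhibits each side as $f^*c_1(L)$ (resp.\ $c_1(M)\,{}^*g$) computed in two ways, using Lemma \ref{ch-op}(5), $(A_{13})$ of Proposition \ref{pro-bi-v}, and the unit property; this is sound and non-circular, since Lemma \ref{ch-op} is established before Lemma \ref{ppu} and its proof nowhere uses PPU, and your bookkeeping of gradings and of the smooth/proper hypotheses matches. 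What your route buys is the conceptual point that, for $\Cal Z^*$, PPU is a formal consequence of properties already proved; what it quietly uses is the identification of the Chern class operators with the $\bullet$-product against the honest element $c_1(L)=[X \xleftarrow{\op{id}_X} X \xrightarrow{\op{id}_X} X; L] \in \Cal Z^1(X,X)$ (valid in $\Cal Z^*$ up to the permitted reordering of line bundles), since only then does $(A_{13})$ apply with $\be=c_1(L)$ or $\alp=c_1(M)$. That step does not transfer verbatim to an abstract bi-variant theory $\Cal B$, where $c_1$ is merely an operator and where the commutation $1_X\bullet c_1(L)=c_1(L)\bullet 1_X$ is Corollary \ref{u-c}, itself deduced from PPU --- which is presumably why the paper proves PPU by the elementary computation and retains it as a separate axiom. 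So the paper's approach is more elementary and axiom-independent, while yours is shorter and exhibits the statement as structurally forced within $\Cal Z^*$.
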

\begin{proof} We prove only the first equality, i.e., the case of smooth maps, since the second equality can be proved in the same way. Since $\jeden_X = [X \xleftarrow {\op{id}_X} X \xrightarrow {\op{id}_X} X]$, it follows from the definition of the smooth pullback $f^*\jeden_X$ (see Definition \ref{ppp of cbc}) that we have $f^*\jeden_X = [X' \xleftarrow {\op{id}_{X'}} X' \xrightarrow {f} X]$:
$$\xymatrix{
& X' \ar[dl]_{\op{id}_{X'}} \ar[r]^{f} & X \ar [dl]^{\op{id}_X} \ar [dr]^{\op{id}_X}\\
X' \ar[r]_f & X &  & X }
$$
Hence we have 
$$(f^*\jeden_X) \bullet c_1(L) =  [X' \xleftarrow {\op{id}_{X'}} X' \xrightarrow {f} X] \bullet c_1(L) = [X' \xleftarrow {\op{id}_{X'}} X' \xrightarrow {f} X; f^*L].$$
On the other hand, since $f^*L$ is a line bundle over $X'$, clearly we have
\begin{align*}
c_1(f^*L) \bullet (f^*\jeden_X) & =  c_1(f^*L) \bullet [X' \xleftarrow {\op{id}_{X'}} X' \xrightarrow {f} X] \\
& = [X' \xleftarrow {\op{id}_{X'}} X' \xrightarrow {f} X; (\op{id}_{X'})^*f^*L]\\
&= [X' \xleftarrow {\op{id}_{X'}} X' \xrightarrow {f} X; f^*L] \qquad \qquad \qquad \qquad \qquad \qquad 
\end{align*}
Thus we obtain $c_1(f^*L) \bullet (f^*\jeden_X) = (f^*\jeden_X) \bullet c_1(L)$.
\end{proof}
If we let $f:X' \to X$ be the identity $\op{id}_X:X \to X$, we get the following
\begin{cor}[Commutativity of the unit and Chern class] \label{u-c}
Let $L$ be a line bundle over $X$. Then we have
$$c_1(L) \bullet \jeden_X = \jeden_X \bullet c_1(L).$$ 
\end{cor}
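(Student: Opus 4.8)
The plan is simply to specialize Lemma~\ref{ppu} to the case $f = \op{id}_X : X \to X$. The identity map is smooth of relative dimension zero, so the first displayed equality of Lemma~\ref{ppu} is available and reads
$$c_1(\op{id}_X^*L) \bullet (\op{id}_X^*\jeden_X) = (\op{id}_X^*\jeden_X) \bullet c_1(L) \in \Cal Z^{1}(X,X).$$
It then only remains to identify the two pullback terms appearing on the left. On the one hand $\op{id}_X^*L \cong L$ as a line bundle over $X$, so $c_1(\op{id}_X^*L) = c_1(L)$ by part~(1) of Lemma~\ref{ch-op}. On the other hand, applying the definition of the smooth pullback (Definition~\ref{ppp of cbc}) to $\jeden_X = [X \xleftarrow{\op{id}_X} X \xrightarrow{\op{id}_X} X]$, and using that $X \times_X X = X$ with all the relevant structure maps being identities, gives $\op{id}_X^*\jeden_X = \jeden_X$. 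Substituting these two identifications into the displayed equality yields exactly $c_1(L) \bullet \jeden_X = \jeden_X \bullet c_1(L)$.

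Alternatively one can verify the identity directly from the definitions, bypassing Lemma~\ref{ppu} altogether: by the definition of the Chern class operator $c_1(L)\bullet$ together with the definition of the product $\bullet$, the left-hand side $c_1(L) \bullet \jeden_X$ equals $[X \xleftarrow{\op{id}_X} X \xrightarrow{\op{id}_X} X; \op{id}_X^*L] = [X \xleftarrow{\op{id}_X} X \xrightarrow{\op{id}_X} X; L]$, and the same computation with the roles of the two factors interchanged shows that $\jeden_X \bullet c_1(L)$ equals the very same cobordism bicycle $[X \xleftarrow{\op{id}_X} X \xrightarrow{\op{id}_X} X; L] = c_1(L)$. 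There is no genuine obstacle here, the corollary being a direct instance of the lemma; the only point requiring (minor) care is the identification of the pullback of the unit along the identity with the unit itself, which is immediate from the fiber-square description of the smooth pullback.
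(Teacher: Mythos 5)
Your proposal is correct and follows essentially the same route as the paper, which obtains the corollary precisely by taking $f=\op{id}_X$ in Lemma~\ref{ppu}; your explicit identifications $\op{id}_X^*\jeden_X=\jeden_X$ and $c_1(\op{id}_X^*L)=c_1(L)$ just make that specialization precise. The alternative direct computation from the definitions of $c_1(L)\bullet$ and $\bullet\, c_1(L)$ is also valid, but it is only a supplementary check rather than a genuinely different argument.
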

\begin{rem}\label{rem1} 
The following observations plays key roles later. 
Let $L_1, L_2, \cdots L_r$ be line bundles over $V$.
Then it follows from Lemma \ref{ch-op} (3) that 
$$c_1(L_1) \bullet \Biggl (c_1(L_2) \bullet \biggl ( \cdots \bullet \Bigl ( c_1(L_r) \bullet \alp \Bigr ) \cdots \biggr ) \Biggr) 
= \Bigl (c_1(L_1) \bullet c_1(L_2) \bullet \cdots c_1(L_r) \Bigr ) \bullet \alp,$$
which is simply denoted by $c_1(L_1) \bullet c_1(L_2) \bullet \cdots c_1(L_r) \bullet \alp.$
Then it follows from the definitions that  we have
$$[V \xleftarrow {\op{id}_V} V \xrightarrow {\op{id}_V} V; L_1, L_2, \cdots,L_r]=c_1(L_1) \bullet c_1(L_2) \bullet \cdots \bullet c_1(L_r) \bullet \jeden_V,$$
$$[V \xleftarrow {\op{id}_V} V \xrightarrow {\op{id}_V} V; L_1, L_2, \cdots,L_r]=\jeden_V \bullet c_1(L_1) \bullet c_1(L_2) \bullet \cdots \bullet c_1(L_r).$$
In fact, it follows from Corollary \ref{u-c} that $\jeden_V$ can be placed at any place; 
\begin{align*}
[V \xleftarrow {\op{id}_V} V \xrightarrow {\op{id}_V} V; L_1, & L_2, \cdots,L_r]\\
& =c_1(L_1) \bullet \cdots \bullet c_1(L_j) \bullet \jeden_V \bullet c_1(L_{j+1}) \bullet \cdots \bullet c_1(L_r).
\end{align*}
Hence we have
$$[X \xleftarrow {p} V \xrightarrow {s} Y; L_1, L_2, \cdots,L_r]=p_* \Bigl (c_1(L_1) \bullet c_1(L_2) \bullet \cdots \bullet c_1(L_r) \bullet \jeden_V \Bigr) \, {}_*s,$$
$$[X \xleftarrow {p} V \xrightarrow {s} Y; L_1, L_2, \cdots,L_r]=p_* \Bigl (\jeden_V \bullet c_1(L_1) \bullet (L_2) \bullet \cdots \bullet c_1(L_r) \Bigr) \, {}_*s,$$
and in general
\begin{eqnarray}\label{p-s-rel}
[X \xleftarrow {p} V \xrightarrow {s} Y; L_1, L_2, \cdots,L_r] \hspace{5.8cm} \\
\hspace{2cm} =p_* \Bigl (c_1(L_1) \bullet \cdots \bullet c_1(L_j) \bullet \jeden_V \bullet c_1(L_{j+1}) \bullet \cdots \bullet c_1(L_r) \Bigr) \, {}_*s. \nonumber
\end{eqnarray}\end{rem}

\begin{defn}[Bi-variant theory] An association $\Cal   B$ assigning to a pair $(X,Y)$ a graded abelian group $\Cal   B^*(X,Y)$ is called a \emph{bi-variant theory}
provided that

\noindent
(1) it is equipped with the following three operations

\begin{enumerate}
\item (Product)  \quad $\bullet: \Cal   B^i(X, Y) \times \Cal   B^j(Y,Z) \to \Cal B^{i+j}(X, Z)$

\item (Pushforward) 
\begin{enumerate}
\item For a \emph{proper} map $f:X \to X'$, $f_*:\Cal   B^i(X,Y) \to \Cal   B^i(X',Y)$.
\item For a \emph{smooth} map $g:Y \to Y'$, ${}_*g: \Cal   B^i(X,Y) \to \Cal   B^{i+\op{dim}g}(X,Y')$.
\end{enumerate}

\item (Pullback) 
\begin{enumerate}
\item For a \emph{smooth} map $f:X' \to X$, $f^*: \Cal   B^i(X,Y) \to  \Cal   B^{i+\op{dim}f}(X',Y)$.
\item For a \emph{proper} map $g:Y' \to Y$, ${}^*g: \Cal   B^i(X,Y) \to  \Cal   B^i(X,Y')$.
\end{enumerate}
\end{enumerate}

\noindent
(2) the three operations satisfy the following nine properties as in Proposition \ref{pro-bi-v}:
\begin{enumerate}
\item[($A_1$)] Product is associative.
\item[($A_2$)] Pushforward is functorial. ((a), (b))
\item[($A_2$)'] Proper pushforward and smooth pushforward commute.
\item[($A_3$)] Pullback is functorial.  ((a), (b))
\item[($A_3$)'] Proper pullback and smooth pullback commute.
\item[($A_{12}$)] Product and pushforward commute.  ((a), (b))
\item[($A_{13}$)] Product and pullback commute.  ((a), (b))
\item[($A_{23}$)] Pushforward and pullback commute.  ((a), (b), (c), (d))
\item[($A_{123}$)] Projection formula.  ((a), (b))
\end{enumerate}

\noindent
(3) $\Cal  B$ has units, i.e., there is an element $1_X \in \Cal  B^0(X,X)$ such that $1_X \bullet \alp = \alp$ for any element $\alp \in \Cal   B(X, Y)$ and $\be \bullet 1_X = \be$ for any element $\be \in \Cal   B(Y, X)$.


\noindent
(4) $\Cal  B$ satisfies PPPU and PPU.

\noindent
(5) $\Cal B$ is equipped with the Chern class operators satisfying the properties in Lemma \ref{ch-op}.
\end{defn}
\begin{defn}
Let $\Cal   B, \Cal   B'$ be two bi-variant theories on a category $\Cal V$. A {\it Grothen- dieck transformation} from $\Cal   B$ to $\Cal   B'$, $\ga : \Cal   B \to \Cal   B'$
is a collection of homomorphisms
$\Cal   B(X, Y) \to \Cal   B'(X,Y)$
for a pair $(X,Y)$ in the category $\Cal V$, which preserves the above three basic operations and the Chern class operator: 
\begin{enumerate}
\item $\ga (\alp \bullet_{\Cal   B} \be) = \ga (\alp) \bullet _{\Cal   B'} \ga (\be)$, 
\item $\ga(f_{*}\alp) = f_*\ga (\alp)$ and $\ga(\alp \, {}_*g) = \ga (\alp) \, {}_*g$,
\item $\ga (g^* \alp) = g^* \ga (\alp)$ and $\ga (\alp \, {}^*f) = \ga (\alp) \, {}^*f$,
\item $\ga(c_1(L) \bullet \alp) = c_1(L) \bullet_{\Cal   B} \ga(\alp)$ and $\ga(\alp \bullet c_1(M) ) = \ga(\alp) \bullet_{\Cal   B} c_1(M)$.
\end{enumerate}
\end{defn}
\begin{thm} The above $\Cal  Z^*(-,-)$ is the universal one among bi-variant theories in the following sense.
Given any bi-variant theory $\Cal  B^*(-,-)$, there exists a unique Grothendieck transformation
$$\gamma_{\Cal  B}: \Cal  Z^*(-,-) \to \Cal  B^*(-,-)$$
such that $\gamma_{\Cal  B}(\jeden_V) = 1_V \in \Cal B(V,V)$ for any variety $V$.
\end{thm}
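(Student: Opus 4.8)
The plan is to observe first that the transformation is forced, and then to check that the forced recipe is well defined and compatible with all the structure. By Remark \ref{rem1} every generator of $\Cal Z^*(X,Y)$ can be written as
$$[X\xleftarrow p V\xrightarrow s Y;L_1,\dots,L_r]=p_*\bigl(c_1(L_1)\bullet\cdots\bullet c_1(L_r)\bullet\jeden_V\bigr)\,{}_*s .$$
Since a Grothendieck transformation is additive and commutes with $p_*$, with ${}_*s$, and with the Chern class operators, and since it must send $\jeden_V$ to $1_V$, there is at most one possibility:
$$\ga_{\Cal B}\bigl([X\xleftarrow p V\xrightarrow s Y;L_1,\dots,L_r]\bigr):=p_*\bigl(c_1(L_1)\bullet\cdots\bullet c_1(L_r)\bullet 1_V\bigr)\,{}_*s\in\Cal B^*(X,Y),$$
extended additively; note $\ga_{\Cal B}(\jeden_V)=(\op{id}_V)_*(1_V)\,{}_*\op{id}_V=1_V$. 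It remains to show this recipe is well defined and is a Grothendieck transformation.

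\emph{Well-definedness.} The only relations imposed in Definition \ref{def1} are additivity and isomorphism of cobordism bicycles. Additivity of the right-hand side is clear because $p_*$, ${}_*s$ and every $c_1(-)\bullet$ are additive, and the degree conventions match since $c_1$ shifts degree by $+1$, ${}_*s$ by $\op{dim}s$ and $p_*$ by $0$. For an isomorphism of cobordism bicycles $h\colon V\cong V'$ carrying $(p,s,L_i)$ to $(p',s',L'_{\si(i)})$, one first removes the permutation $\si$ using commutativity of the Chern class operators (Lemma \ref{ch-op}(2)), and then absorbs $h$ using functoriality of pushforward ($A_2$), the fact that an isomorphism is both proper and smooth, and the compatibilities of $c_1$ with pushforward and pullback (Lemma \ref{ch-op}(4),(5)); the commutativity of the unit with Chern classes (Corollary \ref{u-c}, which rests on PPU), is what lets $1_V$ be placed wherever convenient in these manipulations.

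\emph{Compatibility with the four operations and with Chern classes.} Compatibility with the Chern class operators is immediate from the recipe: appending $p^*L$ to the tuple inserts a factor $c_1(p^*L)$, which Lemma \ref{ch-op}(4) carries outside $p_*$ as $c_1(L)$, and likewise $s^*M$ outside ${}_*s$ as $c_1(M)$. Compatibility with proper pushforward $f_*$ and smooth pushforward ${}_*g$ follows directly from $(A_2)$ together with functoriality of pushforward in $\Cal Z^*$. Compatibility with smooth pullback $f^*$ and proper pullback ${}^*g$ follows from the fiber-square descriptions in Definition \ref{ppp of cbc}, using $(A_{23})$(c) to move a pullback past a proper pushforward and Lemma \ref{ch-op}(5) to rewrite $c_1$ of the pulled-back line bundles.

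\emph{Compatibility with the product --- the main obstacle.} For $\alp=[X\xleftarrow{p_1}V_1\xrightarrow{s_1}Y;L_\bullet]$ and $\be=[Y\xleftarrow{p_2}V_2\xrightarrow{s_2}Z;M_\bullet]$, writing $\theta_L:=c_1(L_1)\bullet\cdots\bullet c_1(L_r)\bullet\jeden_{V_1}$ and similarly $\theta_M$, we have $\ga_{\Cal B}(\alp)=p_{1*}\theta_L\,{}_*s_1$ and $\ga_{\Cal B}(\be)=p_{2*}\theta_M\,{}_*s_2$. Using $(A_{12})$ to factor $p_{1*}$ out of the front and ${}_*s_2$ out of the back, the identity $\ga_{\Cal B}(\alp\bullet\be)=\ga_{\Cal B}(\alp)\bullet\ga_{\Cal B}(\be)$ reduces to identifying $(\theta_L\,{}_*s_1)\bullet(p_{2*}\theta_M)$ with the image of the composed generator over the fiber square $V_1\times_Y V_2$. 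This is carried out by the projection formula $(A_{123})$(b) (to replace $\bullet\,p_{2*}$ by a proper pullback along $p_2$), then $(A_{23})$(d) to trade that pullback for a proper pushforward along $\widetilde{p_2}$ and a smooth pushforward along $\widetilde{s_1}$, and finally $(A_{123})$(a), $(A_{12})$ and $(A_2)'$ to reassemble; the pure-unit case ($r=k=0$) of this chain of moves is exactly PPPU (Lemma \ref{pppu}). The Chern monomials $\theta_L,\theta_M$ are then transported onto $V_1\times_Y V_2$ as $c_1(\widetilde{p_2}^*L_i)$ and $c_1(\widetilde{s_1}^*M_j)$ using Lemma \ref{ch-op}(4),(5), matching $\ga_{\Cal B}(\alp\bullet\be)$ term by term. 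I expect this last step --- the precise bookkeeping of how proper and smooth pushforwards cross one another over $V_1\times_Y V_2$, governed by $(A_{23})$ and $(A_{123})$ with PPPU as prototype --- to be the only genuinely delicate point; every other compatibility is a short diagram chase in the axioms of a bi-variant theory.
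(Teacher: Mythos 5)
Your overall strategy coincides with the paper's: the decomposition of Remark \ref{rem1} forces the formula (\ref{equ1}), uniqueness is immediate from it, and the work is in verifying the compatibilities, with the product as the crux. The genuine problem is in your description of that crux. After $(A_{123})$(b) you have $\bigl((\theta_L\,{}_*s_1)\,{}^*p_2\bigr)\bullet\theta_M$, and $(A_{23})$(d) only exchanges the order of a smooth pushforward and a proper pullback: it yields $\bigl((\theta_L\,{}^*\widetilde{p_2})\,{}_*\widetilde{s_1}\bigr)\bullet\theta_M$, i.e.\ a proper \emph{pullback} along $\widetilde{p_2}$, not the proper pushforward $(\widetilde{p_2})_*$ that occurs in $\ga_{\Cal B}(\alp\bullet\be)$. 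Bridging that gap --- in the unit case, passing from $(1_{V_1}\,{}^*\widetilde{p_2})\,{}_*\widetilde{s_1}$ (equivalently $(\widetilde{p_2})_*\widetilde{s_1}^*1_{V_2}$, reachable instead via $(A_{123})$(a) and $(A_{23})$(c)) to $(\widetilde{p_2})_*1_{V_1\times_Y V_2}\,{}_*\widetilde{s_1}$ --- is exactly the content of PPPU, and PPPU is \emph{not} a consequence of $(A_1)$--$(A_{123})$ and the unit axioms; that is precisely why the paper imposes it as a separate requirement (condition (4)) in the definition of a bi-variant theory. So your parenthetical ``the pure-unit case of this chain of moves is exactly PPPU'' has the logic backwards: PPPU is not the unit case of your chain, it is the extra axiom without which the chain does not close, and ``PPPU as prototype'' understates its role as an indispensable hypothesis on $\Cal B$.

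The fix lives inside your own outline: at that point invoke PPPU for $\Cal B$ (it holds by hypothesis; Lemma \ref{pppu} is its $\Cal Z$-instance), which is what the paper does. The paper's bookkeeping is slightly different --- it starts from $\ga_{\Cal B}(\alp\bullet\be)$, pulls all Chern classes out of $(\widetilde{p_2})_*(\cdots)\,{}_*\widetilde{s_1}$ with Lemma \ref{ch-op}(4), applies PPPU to the remaining term $(\widetilde{p_2})_*1_{V_1\times_Y V_2}\,{}_*\widetilde{s_1}$, and reassembles with $(A_{12})$(a),(b), $(A_2)$ and $(A_2)'$ --- but with PPPU used as an input your variant also goes through. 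The same caveat applies, more mildly, to your pullback compatibilities: the paper's verification there again needs PPPU (or PPU together with Lemma \ref{ch-op}(5)), not just $(A_{23})$(c) and Lemma \ref{ch-op}(5), so quote those unit identities as axioms of $\Cal B$ rather than as consequences. The remainder of the proposal (forced formula, additivity and isomorphism invariance, Chern-class and pushforward compatibilities, uniqueness) matches the paper's proof.
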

\begin{proof} Let $\Cal B$ be a bi-variant theory. From now on we just simply write $\gamma$ for $\gamma_{\Cal B}$ and $\bullet$ for $\bullet_{\Cal   B}$ unless some possible confusion with those for the theory $\Cal  Z^*$.
Then, using the observation (\ref{p-s-rel}) made in Remark \ref{rem1}, we define
$$\gamma: \Cal Z^i(X,Y) \to \Cal B^i(X,Y)$$
by, for $[X \xleftarrow {p} V \xrightarrow {s} Y; L_1, \cdots, L_r] \in \Cal Z^i(X,Y)$,
\begin{eqnarray}\label{equ1}
\hspace{1cm} \gamma([X \xleftarrow {p}  V \xrightarrow {s} Y; L_1, \cdots, L_r])  \hspace{6cm} \\
 \hspace{1cm} = \gamma \left (p_*\Bigl (c_1(L_1) \bullet \cdots \bullet c_1(L_j) \bullet \jeden_V \bullet c_1(L_{j+1}) \bullet \cdots \bullet c_1(L_r) \Bigr) \, {}_*s. \right )  \nonumber  \\
:= p_*\Bigl (c_1(L_1) \bullet \cdots \bullet c_1(L_j) \bullet 1_V \bullet c_1(L_{j+1}) \bullet \cdots \bullet c_1(L_r) \Bigr) \, {}_*s.  \hspace{0.8cm} \nonumber
\end{eqnarray}
We show that this transformation satisfies the above four properties:

\noindent
(1) $\ga (\alp \bullet \be) = \ga (\alp) \bullet \ga (\be)$: 

Let 
$$\alp= [X \xleftarrow{p_1} V_1  \xrightarrow{s_1} Y; L_1,\cdots, L_r] \in \Cal Z^i(X,Y), $$
$$ \be = [Y \xleftarrow{p_2} V_2  \xrightarrow{s_2} Z; M_1, \cdots, M_k]  \in \Cal Z^j(Y,Z).$$
Then by the definition we have
\begin{align*}
& [X \xleftarrow{p_1} V_1  \xrightarrow{s_1} Y; L_1, L_2, \cdots, L_r] \bullet [Y \xleftarrow{p_2} V_2  \xrightarrow{s_2} Z; M_1, M_2, \cdots, M_k] \\
& := [(X \xleftarrow{p_1} V_1  \xrightarrow{s_1} Y) \circ (Y \xleftarrow{p_2} V_2  \xrightarrow{s_2} Z) ; \widetilde{p_2}^*L_1, \cdots, \widetilde{p_2}^*L_r, \widetilde{s_1}^*M_1, \cdots \widetilde{s_1}^*M_k]\\
& = [X \xleftarrow{p_1\circ \widetilde{p_2}} V_1\times_Y V_2  \xrightarrow{s_2 \circ \widetilde{s_1}} Z; \widetilde{p_2}^*L_1, \cdots, \widetilde{p_2}^*L_r, \widetilde{s_1}^*M_1, \cdots \widetilde{s_1}^*M_k]
\end{align*}
$$\xymatrix{
&& V_1\times_Y V_2 \ar [dl]_{\widetilde{p_2}} \ar[dr]^{\widetilde{s_1}} &&\\
& V_1 \ar [dl]_{p_1} \ar [dr]^{s_1} && V_2 \ar [dl]_{p_2} \ar[dr]^{s_2}\\
X & &  Y && Z }
$$
Hence 
we have
\begin{equation*}\begin{split}
 & \ga (\alp \bullet \be) \\
 & =\ga([X \xleftarrow{p_1\circ \widetilde{p_2}} V_1\times_Y V_2  \xrightarrow{s_2 \circ \widetilde{s_1}} Z; \widetilde{p_2}^*L_1, \cdots, \widetilde{p_2}^*L_r, \widetilde{s_1}^*M_1, \cdots \widetilde{s_1}^*M_k]) \qquad \\
 \end{split}
\end{equation*}
\begin{equation*}\begin{split}
 & = \ga \biggl  ( (p_1\circ \widetilde{p_2})_*\Bigl( c_1(\widetilde{p_2}^*L_1)\bullet \cdots c_1(\widetilde{p_2}^*L_r)  \bullet \jeden_{V_1 \times_Y V_2} \\
& \hspace{6cm} \bullet c_1(\widetilde{s_1}^*M_1) \cdots \bullet c_1(\widetilde{s_1}^*M_k)\Bigr) \, {}_*(s_2 \circ \widetilde{s_1})  \biggr ) \\
& \overset{(\ref{equ1})}{=}  (p_1\circ \widetilde{p_2})_*\Bigl( c_1(\widetilde{p_2}^*L_1)\bullet \cdots c_1(\widetilde{p_2}^*L_r)  \bullet 1_{V_1 \times_Y V_2} \\
& \hspace{6cm} \bullet c_1(\widetilde{s_1}^*M_1) \cdots \bullet c_1(\widetilde{s_1}^*M_k)\Bigr) \, {}_*(s_2 \circ \widetilde{s_1}) \\
& \overset{(A_2)}{=} (p_1)_*(\widetilde{p_2})_* \Bigl( c_1(\widetilde{p_2}^*L_1)\bullet \cdots c_1(\widetilde{p_2}^*L_r) \bullet 1_{V_1 \times_Y V_2} \\
& \hspace{6cm} \bullet c_1(\widetilde{s_1}^*M_1) \cdots \bullet c_1(\widetilde{s_1}^*M_k) \Bigr) \, {}_*(\widetilde{s_1}){}_*(s_2)\\
& \overset{(A_2)'}{=} 
(p_1)_* \biggl ((\widetilde{p_2})_* \Bigl ( c_1(\widetilde{p_2}^*L_1)\bullet \cdots c_1(\widetilde{p_2}^*L_r) \bullet 1_{V_1 \times_Y V_2} \\
& \hspace{5.5cm}  \bullet c_1(\widetilde{s_1}^*M_1) \cdots \bullet c_1(\widetilde{s_1}^*M_k) \Bigr ) \, {}_*(\widetilde{s_1}) \biggr) {}_*(s_2)
\end{split}
\end{equation*}
Then by applying the property (4) of Lemma \ref{ch-op} successively with respect to line bundles $L_1, \cdots , L_r$ and $M_1, \cdots , M_k $ we get
\begin{align*}
 (\widetilde{p_2})_* \Bigl ( & c_1(\widetilde{p_2}^*L_1) \bullet \cdots c_1(\widetilde{p_2}^*L_r) \bullet 1_{V_1 \times_Y V_2} \bullet c_1(\widetilde{s_1}^*M_1) \cdots \bullet c_1(\widetilde{s_1}^*M_k) \Bigr ) \, {}_*(\widetilde{s_1}) \\
& =  c_1(L_1)\bullet \cdots c_1(L_r) \bullet (\widetilde{p_2})_* 1_{V_1 \times_Y V_2} \, {}_*(\widetilde{s_1})\bullet c_1(M_1) \cdots \bullet c_1(M_k).
\end{align*}
Here it follows from Lemma \ref{pppu} (PPPU) that we have
$$ (\widetilde{p_2})_* 1_{V_1 \times_Y V_2} \, {}_*(\widetilde{s_1}) = 1_{V_1} \, {}_*(s_1) \bullet (p_2)_* 1_{V_2}.$$
Hence the above equalities continue as follows:
\begin{equation*}\begin{split}
& =(p_1)_*\Bigl (c_1(L_1)\cdots c_1(L_r) \bullet 1_{V_1} \, {}_*(s_1) \bullet (p_2)_* 1_{V_2} \bullet c_1(M_1) \cdots c_1(M_k)\Bigr )\, {}_* (s_2) \\
& \overset{(A_2)'}{=} \Bigl ((p_1)_*\bigl (c_1(L_1)\cdots c_1(L_r) \bullet 1_{V_1} \, {}_*(s_1) \\
& \hspace{5.8cm} \bullet (p_2)_* 1_{V_2} \bullet c_1(M_1) \cdots c_1(M_k)\bigr ) \Bigr ) \, {}_* (s_2) \\
& \overset{(A_{12})(a)}{=}  \biggl ((p_1)_*\Bigl ( c_1(L_1)\cdots c_1(L_r) \bullet 1_{V_1} \, {}_*(s_1) \Bigr ) \\
& \hspace{5.5cm} \bullet \Bigl ((p_2)_* 1_{V_2} \bullet c_1(M_1) \cdots c_1(M_k) \Bigr ) \biggr ) \, {}_* (s_2) \\
& \overset{(A_{12})(b)}{=}  (p_1)_*\Bigl ( c_1(L_1)\cdots c_1(L_r) \bullet 1_{V_1} \, {}_*(s_1) \Bigr ) \\
& \hspace{5.7cm} \bullet \Bigl ((p_2)_* 1_{V_2} \bullet c_1(M_1) \cdots c_1(M_k) \Bigr ) \, {}_* (s_2) \\
& \overset{(A_{12})}{=}  \Bigl ( (p_1)_*\bigl (c_1(L_1)\cdots c_1(L_r) \bullet 1_{V_1} \bigr) \, {}_*(s_1) \Bigr ) \\
& \hspace{5.5cm} \bullet \Bigl ((p_2)_* \bigl (1_{V_2} \bullet c_1(M_1) \cdots c_1(M_k) \bigr ) \, {}_* (s_2) \Bigr) \\
\end{split}
\end{equation*}
\begin{equation*}\begin{split}
& =\ga([X \xleftarrow{p_1} V_1  \xrightarrow{s_1} Y; L_1, \cdots, L_r]) \bullet \ga([Y \xleftarrow{p_2} V_2  \xrightarrow{s_2} Z; M_1, \cdots, M_k])\\
&= \ga(\alp) \bullet\ga(\be).
\end{split}
\end{equation*}
Hence we have $\ga (\alp \bullet \be)=\ga(\alp) \bullet\ga(\be).$\\

\noindent
(2) $\ga(f_{*}\alp) = f_*\ga (\alp)$ and $\ga(\alp \, {}_*f) = \ga (\alp) \, {}_*f$: 

Let $\alp =[X \xleftarrow {p} V \xrightarrow {s} Y; L_1, L_2, \cdots, L_r] \in \Cal Z^i(X,Y)$.

(i) $\ga(f_{*}\alp) = f_*\ga (\alp)$: Let $f:X \to X'$ be a proper map. Then we have
\begin{equation*}\begin{split}
\ga(f_*\alp) & = \ga([X' \xleftarrow {f \circ p} V \xrightarrow {s} Y; L_1, \cdots, L_r])\\
&= \ga \left (  (f \circ p)_*\Bigl (c_1(L_1) \bullet \cdots c_1(L_r) \bullet \jeden _V \Bigr) \, {}_*s \right )\\
& \overset{(\ref{equ1})}{=}  (f \circ p)_*\Bigl (c_1(L_1) \bullet \cdots c_1(L_r) \bullet 1_V \Bigr) \, {}_*s\\
& \overset{(A_2)(a)}{=}  f_* p_*\Bigl (c_1(L_1) \bullet \cdots c_1(L_r) \bullet 1_V \Bigr) \, {}_*s\\
& \overset{(A_2)'}{=}  f_* \Biggl (p_*\Bigl (c_1(L_1) \bullet  \cdots c_1(L_r) \bullet 1_V \Bigr) \, {}_*s \Biggr)\\
& = f_*\ga([X \xleftarrow {p} V \xrightarrow {s} Y; L_1, L_2, \cdots, L_r] )\\
& = f_*\ga(\alp).
\end{split}
\end{equation*}

(ii) $\ga(\alp \, {}_*f) = \ga (\alp) \, {}_*f$: Let $f:Y \to Y'$ be a smooth map. Then we have
\begin{equation*}\begin{split}
\ga(\alp \, {}_*f) & = \ga([X \xleftarrow {p} V \xrightarrow {f \circ s} Y'; L_1, L_2, \cdots, L_r])\\
& = \ga \left ( p_*\Bigl (c_1(L_1) \bullet \cdots c_1(L_r) \bullet \jeden_V\Bigr)\, {}_*(f \circ s) \right) \\
&\overset{(\ref{equ1})}{=} p_*\Bigl (c_1(L_1) \bullet \cdots c_1(L_r) \bullet 1_V\Bigr)\, {}_*(f \circ s)\\
& \overset{(A_2)(b)}{=}  p_*\Bigl (c_1(L_1) \bullet \cdots c_1(L_r) \bullet 1_V \Bigr)\, {}_*s \, {}_*f\\
& \overset{(A_2)'}{=} \left (p_*\Bigl (c_1(L_1) \bullet  \cdots c_1(L_r) \bullet 1_V \Bigr) \, {}_*s \right) \, {}_*f\\
& = \ga([X \xleftarrow {p} V \xrightarrow {s} Y; L_1, L_2, \cdots, L_r] ) \, {}_*f\\
& = \ga(\alp) \, {}_*f.
\end{split}
\end{equation*}

\noindent
(3) $\ga (f^* \alp) = f^* \ga (\alp)$ and $\ga (\alp \, {}^*f) = \ga (\alp) \, {}^*f$: 

Let $\alp =[X \xleftarrow {p} V \xrightarrow {s} Y; L_1, L_2, \cdots, L_r] \in \Cal Z^i(X,Y)$.

(i) $\ga (f^* \alp) = f^* \ga (\alp)$: Let $f:X' \to X$ be a smooth map and consider the following diagram:
$$\xymatrix{
& X' \times_X V \ar[dl]_{p'} \ar[r]^{\qquad f'} & V \ar [dl]^{p} \ar [dr]^{s}\\
X' \ar[r]_f & X &  & Y }
$$
\begin{equation*}\begin{split}
\ga(f^*\alp) & = \ga([X' \xleftarrow {p'} X' \times_X V \xrightarrow {s \circ f'} Y; (f')^*L_1, (f')^*L_2, \cdots, (f')^*L_r])\\
& = \ga \left ( (p')_*\biggl (\jeden_{X' \times_X V} \bullet c_1\Bigl((f')^*L_1 \Bigr) \cdots \bullet c_1 \Bigl((f')^*L_r \Bigr) \biggr) \, {}_*(s \circ f') \right) \\
& \overset{(\ref{equ1})}{=} (p')_*\biggl (1_{X' \times_X V} \bullet c_1\Bigl((f')^*L_1 \Bigr) \cdots \bullet c_1 \Bigl((f')^*L_r \Bigr) \biggr) \, {}_*(s \circ f') \\
& \overset{(A_2)(b) }{=}  (p')_*\biggl (1_{X' \times_X V} \bullet c_1\Bigl((f')^*L_1 \Bigr) \cdots \bullet c_1 \Bigl((f')^*L_r  \Bigr ) \biggr) \, {}_*(f') \, {}_*s\\
& =  \Biggl ( (p')_* \biggl (1_{X' \times_X V} \bullet c_1\Bigl((f')^*L_1 \Bigr) \cdots \bullet c_1 \Bigl((f')^*L_r  \Bigr ) \biggr) \, {}_*(f') \Biggr) \, {}_*s\\
\end{split}
\end{equation*}
By applying the property (4) of Lemma \ref{ch-op} successively with respect to line bundles $L_1$, $L_2$, $\cdots , L_r$, the above equalities continue as follows:
\begin{equation*}\begin{split}
& = \biggl ( (p')_* \Bigl (1_{X' \times_X V}\, {}_*(f') \bullet c_1(L_1) \cdots \bullet c_1(L_r) \Bigr ) \biggr ) \, {}_*s\\
& \overset{(A_{12})(a)} {=} \Bigl ((p')_*1_{X' \times_X V}\, {}_*(f') \bullet c_1(L_1) \cdots \bullet c_1(L_r) \Bigr ) 
\, {}_*s
\end{split}
\end{equation*}
Here we note that
\begin{align*}
(p')_*1_{X' \times_X V}\, {}_*(f')  & = 1_{X'}\, {}_*f \bullet p_* 1_V \quad \text {(by Lemma \ref{pppu} (PPPU))}\\
& = 1_{X'} \bullet f^*p_* 1_V \quad \text {(by $A_{123}$ (a))}
\end{align*}
Thus the above equalities continue as follows:
\begin{equation*}\begin{split}
& = \Bigl (1_{X'} \bullet f^*p_* 1_V \bullet c_1(L_1) \cdots \bullet c_1(L_r) \Bigr ) 
\, {}_*s
\\
& = \Bigl (f^*p_* 1_V \bullet c_1(L_1) \cdots \bullet c_1(L_r) \Bigr ) 
\, {}_*s \quad \text{(since $1_{X'}$ is the unit)} \\
& \overset{(A_{13})(a)}{=} \Bigl (f^* \bigl (p_* 1_V \bullet c_1(L_1) \cdots \bullet c_1(L_r) \bigr ) \Bigr ) 
\, {}_*s\\
& \overset{(A_{23})(b)}{=} f^* \bigl (p_* 1_V \bullet c_1(L_1) \cdots \bullet c_1(L_r) \bigr )\, {}_*s \\
& \overset{(A_{12})(a)}{=}f^* \Bigl (p_* (1_V \bullet c_1(L_1) \cdots \bullet c_1(L_r) ) \Bigr) \, {}_*s \\
& \overset{(A_{23})(b)}{=} f^* \Bigl (p_* (1_V \bullet c_1(L_1) \cdots \bullet c_1(L_r) ) \, {}_*s  \bigr) \\
& = f^* \ga([X \xleftarrow {p} V \xrightarrow {s} Y; L_1, L_2, \cdots, L_r] ) \\
& = f^* \ga(\alp)
\end{split}
\end{equation*}
Here we note that in fact we can show the above ``indirectly", using Lemma \ref{ppu} (PPU) and Lemma \ref{ch-op} (5) as well:
\begin{equation*}\begin{split}
& = \Bigl (f^*p_* 1_V \bullet c_1(L_1) \bullet c_1(L_2) \bullet \cdots \bullet c_1(L_r) \Bigr ) 
\, {}_*s  \\
& \overset{(A_{23})(c)}{=} \Bigl ((p')_*(f')^* 1_V \bullet c_1(L_1) \bullet c_1(L_2) \bullet \cdots \bullet c_1(L_r) \Bigr ) 
\, {}_*s  \\
\end{split}
\end{equation*}
\begin{equation*}\begin{split}
& \overset{(A_2)'}{=}  (p')_* \Bigl ( \underbrace{(f')^* 1_V \bullet c_1(L_1)} \bullet c_1(L_2) \bullet \cdots \bullet c_1(L_r) \Bigr ) 
\, {}_*s  \\
& =   (p')_* \Bigl (c_1((f')^*L_1) \bullet \underbrace{(f')^* 1_V \bullet c_1(L_2)} \bullet  \cdots \bullet c_1(L_r) \Bigr ) 
\, {}_*s  \\
& \hspace{8cm} \text{(by applying (PPU) for $L_1$)} \\
& =   (p')_* \Bigl (c_1((f')^*L_1) \bullet c_1((f')^*L_2) \bullet \underbrace{(f')^* 1_V \bullet c_1(L_3)} \bullet  \cdots \bullet c_1(L_r)\Bigr ) 
\, {}_*s  \\
& \hspace{8cm} \text{(by applying (PPU) for $L_2$)} \\
& =   (p')_* \Bigl (c_1((f')^*L_1) \bullet c_1((f')^*L_2) \bullet  \cdots \bullet \underbrace{c_1((f')^*L_r) \bullet (f')^* 1_V} \Bigr ) 
\, {}_*s  \\
&  \hspace{7.3cm} \text{(by applying (PPU) for the rest)} \\
& =   (p')_* \Bigl (c_1((f')^*L_1) \bullet c_1((f')^*L_2) \bullet \cdots \bullet (f')^*(c_1(L_r) \bullet 1_V)  \Bigr ) 
\, {}_*s  \\
& \hspace{6cm} \quad \text{(by applying Lemma \ref{ch-op} (5) for $L_r$)} \\
& =   (p')_* \Bigl ((f')^* \bigl (c_1(L_1) \bullet \cdots \bullet c_1(L_r) \bullet 1_V) \bigr ) \Bigr ) 
\, {}_*s  \\
& \hspace{5cm}  \quad \text{(by applying Lemma \ref{ch-op} (5) successively)} \\
& \overset{(A_2)'}{=} \Bigl ((p')_*(f')^* \bigl (c_1(L_1) \bullet \cdots \bullet c_1(L_r) \bullet 1_V) \bigr ) \Bigr ) \, {}_*s  \\
& \overset{(A_{23})(c)}{=} \Bigl (f^*p_* \bigl (c_1(L_1) \bullet \cdots \bullet c_1(L_r) \bullet 1_V \bigr ) \Bigr ) \, {}_*s  \\
& \overset{(A_{23})(b)}{=} f^* \Bigl (p_* \bigl (c_1(L_1) \bullet \cdots \bullet c_1(L_r) \bullet 1_V \bigr ) {}_*s \Bigr )  \\
& =  f^* \ga([X \xleftarrow {p} V \xrightarrow {s} Y; L_1, L_2, \cdots, L_r] ) \\
& = f^* \ga(\alp)
\end{split}
\end{equation*}

(ii) $\ga (\alp \, {}^*f) = \ga (\alp) \, {}^*f$: Let $f:Y' \to Y$ be a proper map. The proof is similar to the above. For the sake of the reader's convenience, we write down the proof.
Let $f:Y' \to Y $ be a proper map and consider the following diagram:
$$\xymatrix{
& V \ar [dl]^{p} \ar [dr]_{s} & V \times _Y Y' \ar[l]_{f' \quad } \ar[dr]^{s'}\\
X &  & Y & Y' \ar[l]^f}
$$
\begin{equation*}\begin{split}
\ga(\alp \, {}^*f) & = \ga([X \xleftarrow {p\circ f'} X' \times_X V \xrightarrow {s'} Y'; (f')^*L_1, (f')^*L_2, \cdots, (f')^*L_r])\\
& = (p \circ f')_*\Biggl (c_1\Bigl((f')^*L_1 \Bigr)\bullet \cdots c_1\Bigl ((f')^*L_r \Bigr) \bullet 1_{V \times_Y Y'}) \Biggr) \, {}_*(s')\\
& \overset{(A_2)}{=} p_*(f')_*\Biggl (c_1\Bigl((f')^*L_1 \Bigr)\bullet \cdots c_1\Bigl ((f')^*L_r \Bigr) \bullet 1_{V \times_Y Y'}) \Biggr) \, {}_*(s')\\
\end{split}
\end{equation*}
By applying the property (4) of Lemma \ref{ch-op} successively with respect to line bundles $L_1$, $L_2$, $\cdots , L_r$, the above equalities continue as follows:
\begin{equation*}\begin{split}
& = p_* \Bigl  (c_1(L_1) \bullet \cdots c_1(L_r) \bullet (f')_*1_{V \times_Y Y'} \Bigr) \, {}_*(s')\\
& \overset{(A_{12})}{=} p_* \Bigl  (c_1(L_1) \bullet \cdots c_1(L_r) \bullet (f')_*1_{V \times_Y Y'}  \, {}_*(s') \Bigr)
\end{split}
\end{equation*}
Here it follows from  Lemma \ref{pppu} (PPPU)) that we have
$$(f')_*1_{V \times_Y Y'}= 1_V \, {}_*s\bullet f_*1_{Y'}.$$
Thus the above equalities continue as follows:
\begin{equation*}\begin{split}
& = p_* \Bigl  (c_1(L_1) \bullet \cdots c_1(L_r) \bullet (1_V \, {}_*s \bullet f_*1_{Y'} )\Bigr)\\
& \overset{(A_{123})(b)}{=}  p_* \Bigl  (c_1(L_1) \bullet \cdots c_1(L_r) \bullet (1_V \, {}_*s \, {}^*f \bullet 1_{Y'} )\Bigr)\\
& = p_* \Bigl  (c_1(L_1) \bullet \cdots c_1(L_r) \bullet 1_V \, {}_*s \, {}^*f \Bigr) \quad \text{(since $1_{Y'}$ is the unit)} \\
& \overset{(A_{13})}{=}   p_* \Bigl  (\bigl (c_1(L_1) \bullet \cdots c_1(L_r) \bullet 1_V \, {}_*s \bigr) \, {}^*f \Bigr) \\
& \overset{(A_{2})'}{=}    \Bigl  (p_*\bigl (c_1(L_1) \bullet \cdots c_1(L_r) \bullet 1_V \, {}_*s \bigr) \Bigr) \, {}^*f \\
&= \Bigl  (p_* \bigl (c_1(L_1) \bullet \cdots c_1(L_r) \bullet 1_V \bigr) \, {}_*s \Bigr) \, {}^*f \\
& = \ga(\alp)\, {}^*f .
\end{split}
\end{equation*}
As done for another proof of (i) using using Lemma \ref{ppu} (PPU) and Lemma \ref{ch-op} (5), we can show the above in a similar way. For the sake of the reader we write down the proof.
\begin{equation*}\begin{split}
& = p_* \Bigl  (c_1(L_1) \bullet \cdots \bullet c_1(L_{r-1}) \bullet c_1(L_r) \bullet 1_V \, {}_*s \, {}^*f \Bigr) \\
& \overset{(A_{23})(d)}{=}   p_* \Bigl  (c_1(L_1) \bullet \cdots \bullet c_1(L_{r-1}) \bullet c_1(L_r) \bullet 1_V \, {}^*(s') \, {}_*(f') \Bigr)
 \\
 & \overset{(A_2)'}{=}   p_* \Bigl  (c_1(L_1) \bullet \cdots \bullet c_1(L_{r-1}) \bullet  \underbrace{c_1(L_r) \bullet 1_V \, {}^*(s')} \Bigr) \, {}_*(f') 
 \\
& =    p_* \Bigl  (c_1(L_1) \bullet \cdots \bullet \underbrace{c_1(L_{r-1}) \bullet 1_V \, {}^*(s')} \bullet c_1((s')^*L_r) \Bigr) \, {}_*(f') \\
& \hspace{8cm} \quad \text{(by applying (PPU) for $L_r$)} \\
& =    p_* \Bigl  (c_1(L_1) \bullet \cdots \bullet 1_V \, {}^*(s') \bullet c_1((s')^*L_{r-1})  \bullet c_1((s')^*L_r) \Bigr) \, {}_*(f') \\
& \hspace{7.6cm} \quad \text{(by applying (PPU) for $L_{r-1}$)} \\
& =    p_* \Bigl  (\underbrace{1_V \, {}^*(s') \bullet c_1((s')^*L_1)} \bullet \cdots \bullet c_1((s')^*L_r) \Bigr) \, {}_*(f') \\
& \hspace{7.3cm} \quad \text{(by applying (PPU) for the rest)} \\    
& =    p_* \Bigl  (\bigl ( 1_V \bullet c_1(L_1) \bigr ) \, {}^*(s') \bullet \cdots \bullet c_1((s')^*L_r) \Bigr) \, {}_*(f') \\
& \hspace{6.5cm} \quad \text{(by applying Lemma \ref{ch-op} (5)  for $L_1$)} \\ 
& =    p_* \Bigl  (\bigl ( 1_V \bullet c_1(L_1) \bullet \cdots \bullet c_1(L_r) \bigr)  \, {}^*(s') \Bigr) \, {}_*(f') \\
& \hspace{5cm} \quad \text{(by applying Lemma \ref{ch-op} (5) successively)} \\
& \overset{(A_{2})'}{=}     p_* \Bigl  (\bigl ( 1_V \bullet c_1(L_1) \bullet \cdots \bullet c_1(L_r) \bigr)  \, {}^*(s') \, {}_*(f') \Bigr)  \\
& \overset{(A_{23})(d)}{=}     p_* \Bigl  (\bigl ( 1_V \bullet c_1(L_1) \bullet \cdots \bullet c_1(L_r) \bigr)  \, {}_*s \, {}^*f  \Bigr)  \\
& \overset{(A_{23})(a)}{=} \Bigl  (p_* \bigl (1_V \bullet c_1(L_1) \bullet \cdots c_1(L_r) \bigr) \, {}_*s \Bigr) \, {}^*f \\
& = \ga(\alp)\, {}^*f .
\end{split}
\end{equation*}

\noindent
(4) $\ga(c_1(L) \bullet \alp) = c_1(L) \bullet \ga(\alp)$ and  $\ga(\alp \bullet c_1(M)) = \ga(\alp) \bullet c_1(M)$:
Let $\alp =[X \xleftarrow {p} V \xrightarrow {s} Y; L_1, L_2, \cdots, L_r] \in \Cal Z^i(X,Y)$.

(i) $\ga(c_1(L) \bullet \alp) = c_1(L) \bullet \ga(\alp)$: Let $L$ be a line bundle over $X$. Then we have
\begin{equation*}\begin{split}
\ga( c_1(L) \bullet \alp)  & = \ga([X \xleftarrow {p} V \xrightarrow {s} Y; L_1, \cdots, L_r, p^*L])\\
& = p_*\Bigl (c_1(L_1) \bullet \cdots c_1(L_r) \bullet c_1(p^*L) \bullet 1_V \Bigr) \, {}_*s\\
& \overset{\text{(Lemma \ref{ch-op} (2))}}{=} p_*\Bigl (c_1(p^*L) \bullet c_1(L_1) \bullet \cdots c_1(L_r) \bullet 1_V \Bigr) \, {}_*s\\
& = \biggl (p_*\Bigl (c_1(p^*L) \bullet c_1(L_1) \bullet \cdots c_1(L_r) \bullet 1_V \Bigr) \biggr )\, {}_*s\\
& \overset{\text{(Lemma \ref{ch-op} (4))}}{=} \biggl (c_1(L) \bullet p_* \Bigl (c_1(L_1) \bullet \cdots c_1(L_r) \bullet 1_V \Bigr) \biggr )\, {}_*s\\
& = c_1(L) \bullet \biggl (p_* \Bigl (c_1(L_1) \bullet \cdots c_1(L_r) \bullet 1_V \Bigr) \biggr )\, {}_*s\\
& = c_1(L)\bullet \ga(\alp)
\end{split}
\end{equation*}

(ii) $\ga(\alp \bullet c_1(M)) = \ga(\alp) \bullet c_1(M)$: Let $M$ be a line bundle over $Y$. Then we have
\begin{equation*}\begin{split}
\ga(\alp \bullet c_1(M))  & = \ga([X \xleftarrow {p} V \xrightarrow {s} Y; L_1, L_2, \cdots, L_r, s^*M])\\
& = p_*\Bigl (c_1(L_1)\cdots \bullet c_1(L_r)\bullet 1_V \bullet c_1(s^*M) \Bigr)\,{}_*s \, \, \,  \, \text{(see Rmeark \ref{rem1})} \\
& \overset{(A_2)'}{=} p_* \biggl ( \Bigl (c_1(L_1)\cdots \bullet c_1(L_r)\bullet 1_V \bullet c_1(s^*M) \Bigr)\,{}_*s \biggr )\\
\end{split}
\end{equation*}
\begin{equation*}\begin{split}
& \overset{\text{(Lemma \ref{ch-op} (4))}}{=} p_* \biggl ( \Bigl (c_1(L_1)\cdots \bullet c_1(L_r) \bullet 1_V \Bigr) \, {}_*s  \bullet c_1(M) \biggr )\\
& \overset{(A_{12})}{=}  \biggl (p_* \Bigl (c_1(L_1)\cdots \bullet c_1(L_r) \bullet 1_V \Bigr) \, {}_*s \biggr ) \bullet c_1(M) \\
& = \ga(\alp) \bullet c_1(M)
\end{split}
\end{equation*}
(5) The uniqueness of the Grothendieck transformation $\ga:\Cal Z \to \Cal B$ follows from the compatibility of pushforward and the Chern class operator and the requirement that the unit is mapped to the unit. Indeed, let $\ga':\Cal Z \to \Cal B$ be a such a Grothendieck transformation.
Then we have
\begin{align*}
\ga'(\alp) & = \ga'([X \xleftarrow {p} V \xrightarrow {s} Y; L_1, L_2, \cdots, L_r]) \\
& = \ga'\Biggl (p_*\Bigl (c_1(L_1) \bullet \cdots c_1(L_r) \bullet \jeden_V \Bigr)\, {}_*s  \Biggr)\\
& = p_*\Bigl (c_1(L_1) \bullet \cdots c_1(L_r) \bullet \ga'(\jeden_V) \Bigr)\, {}_*s  \\
& = p_*\Bigl (c_1(L_1) \bullet \cdots c_1(L_r) \bullet 1_V) \Bigr)\, {}_*s  \\
& = \ga_{\Cal B}(\alp).
\end{align*}
\end{proof}

\begin{rem} The ``forget" map defined in (\ref{forget}) gives rise to the following canonical homomorphism 
$$\frak f: \mathbb {OM}^{pro}_{sm}(X \xrightarrow f Y) \to \Cal  Z^*(X,Y)$$
 defined by
$$\frak f ([V \xrightarrow p Y; L_1, \cdots, L_r]):= [X \xleftarrow p V \xrightarrow {f \circ p} Y; L_1, \cdots, L_r].$$
Then it follows from the definitions that the following diagrams are commutative: 
\begin{enumerate}
\item As to the product $\bullet$:
$$\CD
\mathbb {OM}^{prop}_{sm}(X \xrightarrow f Y)  \otimes \mathbb {OM}^{prop}_{sm}(Y \xrightarrow g Z)  @> {\bullet}>> 
\mathbb {OM}^{prop}_{sm}(X \xrightarrow {g \circ f} Z)  \\
@V {\frak f  \otimes \frak f} VV @VV {\frak f} V\\
\Cal Z^i(X,Y) \otimes \Cal  Z^j(Y,Z) @>> {\bullet} > \Cal   Z^{i+j}(X,Z) . \endCD
$$ 
\item As to the pushforward:
$$\CD
\mathbb {OM}^{prop}_{sm}(X \xrightarrow {g \circ f} Z)  @> {f_*}>> 
\mathbb {OM}^{prop}_{sm}(Y \xrightarrow {g} Z)  \\
@V {\frak f} VV @VV {\frak f} V\\
\Cal Z^*(X,Z)@>> {f_*} > \Cal   Z^*(Y,Z) . \endCD 
$$ 

\item As to the Chern class operator:
$$\CD
\mathbb {OM}^{prop}_{sm}(X \xrightarrow f Y)  @> {\Phi(L)}>> 
\mathbb {OM}^{prop}_{sm}(X \xrightarrow f Y)  \\
@V {\frak f} VV @VV {\frak f} V\\
\Cal Z^*(X,Y)@>> {c_1(L)\bullet} > \Cal   Z^*(X,Y) . \endCD
$$ 
\end{enumerate}
As to the pullback we cannot expect a canonical commutative diagram. Indeed we consider the following fiber square

$$\CD
V' @> g'' >> V \\
@V {h'} VV @VV {h}V\\
X' @> g' >> X \\
@V f' VV @VV f V\\
Y' @>> g > Y. \endCD
$$

Then we have the following diagram, which does not necessarily commute:
\begin {equation}\label{dia}
\xymatrix
{
\mathbb {OM}^{prop}_{sm}(X \xrightarrow f Y) \ar[r]^{g^*} \ar[d]_{\frak f} &  \mathbb {OM}^{prop}_{sm}(X' \xrightarrow {f'} Y') \ar[d]_{\frak f}\\
\Cal Z^*(X,Y)  \ar[r]_{(g')^* \circ (\, {}^*g)} & \Cal Z^*(X',Y') 
}
\end{equation}
\begin{align*}
(\frak f \circ g^*)([V \xrightarrow h X;& L_1, \cdots, L_r]) \\
&= \frak f ([V' \xrightarrow h' X'; (g'')^*L_1, \cdots, (g'')^*L_r])\\
& =[X' \xleftarrow h' V' \xrightarrow {f' \circ h'} Y'; (g'')^*L_1, \cdots, (g'')^*L_r])
\end{align*}
\begin{align*}
 & \Bigl  (\bigl ((g')^* \circ (\, {}^*g) \bigr )\circ \frak f \Bigr )  ([V \xrightarrow h X; L_1, \cdots, L_r]) \\
&= \bigl ((g')^* \circ (\, {}^*g) \bigr ) ([X \xleftarrow h V \xrightarrow {f \circ h} Y; L_1, \cdots, L_r])\\
&= (g')^* ([X \xleftarrow h V \xrightarrow {f \circ h} Y; L_1, \cdots, L_r]) \, {}^*g\\
& =[X' \xleftarrow {h' \circ \widetilde {g''}} V' \times_V V'  \xrightarrow {f' \circ h' \circ \widetilde {g''}} Y'; (\widetilde {g''})^*(g'')^*L_1, \cdots, (\widetilde {g''})^*(g'')^*L_r]).
\end{align*}
Here we consider the following fiber square:
$$\CD
V' \times _V V' @> {\widetilde {g''}} >> V'\\
@V {\widetilde {g''}} VV @VV {g''} V\\
V'  @>> {g''} > V. \endCD
$$
Thus in general we have that $\frak f \circ g^* \not = \bigl ((g')^* \circ (\, {}^*g) \bigr )\circ \frak f$ in the above diagram (\ref{dia}).
\end{rem}

For the sake of convenience, we list the properties for $\Cal Z^*(X, {\bf Y})$ with $Y$ fixed and $\Cal Z^*({\bf X}, Y)$ with $X$ fixed. Note that to emphasize the target $Y$ or the source $X$ being fixed we denote them in bold, $\bf Y$ and $\bf X$, respectively.

\begin{pro}
\begin{enumerate}
\item 
$\Cal Z^*(X, {\bf Y})$:
\begin{enumerate}
\item (Proper pushforward is covariantly functorial): For two proper maps $f_1:X \to X', f_2:X' \to X''$, 
$(f_1)_*:\Cal Z^i(X,{\bf Y}) \to \Cal   Z^i(X',{\bf Y})$, we have $(f_2)_*:\Cal Z^i(X',{\bf Y}) \to \Cal   Z^i(X'',{\bf Y})$ and 
$$(f_2 \circ f_1)_* = (f_2)_* \circ (f_1)_*.$$

\item (Smooth pullback is contravariantly functorial): For two smooth maps $f_1:X \to X', f_2:X' \to X''$, we have $(f_1)^*:\Cal   Z^{i+\op{dim}f_2}(X',{\bf Y}) \to \Cal   Z^{i+\op{dim}f_2 + \op{dim}f_1}(X,{\bf Y})$, $(f_2)^*:\Cal   Z^i(X'',{\bf Y}) \to \Cal   Z^{i+\op{dim}f_2}(X',{\bf Y})$ and 
$$(f_2 \circ f_1)^* = (f_1)^* \circ (f_2)^*.$$

\item For a line bundle $L$ over $X$ we have the Chern operator:
$$c_1(L) \bullet :\Cal   Z^i(X,{\bf Y}) \to \Cal   Z^{i+1}(X,{\bf Y}).$$
Moreover, if $L$ and $L'$ are isomorphic, then $c_1(L) \bullet =c_1(L') \bullet.$

\item (Proper pushforward and smooth pullback commute) 
 $$\CD
\widetilde X @> {\widetilde f}>> X''\\
@V {\widetilde g} VV @VV {g} V\\
X' @>> {f} > X \endCD
$$ 
with $f$ proper and $g$ smooth, we have
$$g^*f_* = \widetilde f_* \widetilde g^*,$$
i.e., the following diagram commutes:
$$\CD
\Cal   Z^i(X',{\bf Y})@> {f_*}>> \Cal   Z^i(X,{\bf Y})\\
@V {\widetilde g^*} VV @VV {g^*} V\\
\Cal Z^{i+\op{dim}g}(\widetilde X,{\bf Y})  @>> {\widetilde f_*} > \Cal  Z^{i+\op{dim}g}(X'',{\bf Y}) . \endCD
$$
(Note that $\op{dim}\widetilde g = \op{dim}g$.)

\item (compatibility with proper pushforward (``projection formula")): For a proper map $f:X \to X'$ and a line bundle $L$ over $X'$, we have that for $\alp \in \Cal Z^i(X,{\bf Y})$
$$ f_*\Bigl(c_1(f^*L) \bullet \alp \Bigr) = c_1(L)\bullet f_*\alp.$$

\item (compatibility with smooth pullback (``pullback formula")): For a smooth map $f:X' \to X$ and a line bundle $L$ over $X$, we have that for $\alp \in \Cal Z^i(X,{\bf Y})$
$$ f^*\Bigl(c_1(L) \bullet \alp \Bigr) = c_1(f^*L) \bullet f^*\alp.$$

\item (commutativity): If $L$ and $L'$ are line bundles over $X$, then we have
$$c_1(L) \bullet c_1(L') \bullet = c_1(L') \bullet c_1(L) \bullet :\Cal Z^i(X,{\bf Y}) \to \Cal Z^{i+2}(X,{\bf Y}),$$

\end{enumerate}
\item 
$\Cal Z^*({\bf X}, Y)$:
\begin{enumerate}
\item (Smooth pushforward is covariantly functorial): For two smooth maps $g_1:Y \to Y', g_2:Y' \to Y''$, we have 

${}_*(g_1):\Cal   Z^i({\bf X},Y) \to \Cal   Z^{i+\op{dim}g_1}({\bf X},Y')$, 

${}_*(g_2):\Cal Z^{i+\op{dim}g_1} ({\bf X},Y') \to \Cal   Z^{i+\op{dim}g_1 + \op{dim}g_2}({\bf X},Y'')$ and 
$${}_*(g_2 \circ g_1) = {}_*(g_1) \circ {}_*(g_2).$$
\item (Proper pullback is contravariantly functorial): For two proper maps $g_1:Y \to Y', g_2:Y' \to Y''$ and ${}^*(g_1):\Cal   Z^i({\bf X},Y') \to \Cal   Z^i({\bf X},Y)$ and ${}^*(g_2):\Cal   Z^i({\bf X},Y'') \to \Cal   Z^i({\bf X},Y')$
$${}^*(g_2 \circ g_1) = {}^*(g_2) \circ {}^*(g_1).$$
\item For a line bundle $M$ over $Y$ we have the Chern operator:
$$\bullet c_1(M):\Cal   Z^i({\bf X}, Y) \to \Cal   Z^{i+1}({\bf X}, Y).$$
Moreover, if $M$ and $M'$ are isomorphic, then $\bullet c_1(M)=\bullet c_1(L').$

\item (Smooth pushforward and proper pullback commute)
 $$\CD
\widetilde Y @> {\widetilde f}>> Y''\\
@V {\widetilde g} VV @VV {g} V\\
Y' @>> {f} > Y \endCD
$$ 
with $f$ proper and $g$ smooth, we have
$${}^* f \, {}_* g= {}_* \widetilde g \, {}^* \widetilde f,$$
i.e., the following diagram commutes:
$$\CD
\Cal  Z^i({\bf X},Y'')@> {{}_*g}>> \Cal   Z^{i+\op{dim}}({\bf X},Y) \\
@V {{}^* \widetilde f} VV @VV {{}^*f} V\\
\Cal Z^i({\bf X},\widetilde Y)  @>> {{}_*\widetilde g} > \Cal  Z^{i+\op{dim}g}({\bf X},Y') . \endCD
$$
(Note that $\op{dim}\widetilde g = \op{dim}g$.)

\item (compatibility with smooth pushforward (``projection formula")): For a smooth map $g:Y \to Y'$ and a line bundle $M$ over $Y'$ we have that for $\alp \in \Cal Z^i({\bf X},Y)$
$$\Bigl(\alp \bullet c_1(g^*M) \Bigr) \, {}_*g = \alp \, {}_*g \bullet c_1(M).$$

\item (compatibility with proper pullback (``pullback formula")): For a proper map $g:Y' \to Y$ and a line bundle $M$ over $Y$ we have that for $\alp \in \Cal Z^i({\bf X},Y)$
$$\Bigl(\alp \bullet c_1(M) \Bigr) \, {}^*g= \alp \, {}^*g \bullet c_1(g^*M).$$

\item (commutativity): If $M$ and $M'$ are line bundles over $Y$, then we have
$$\bullet c_1(M) \bullet  c_1(M')= \bullet c_1(M') \bullet c_1(M) :\Cal Z^i({\bf X},Y) \to \Cal Z^{i+2}({\bf X},Y),$$
\end{enumerate}
\end{enumerate}
\end{pro}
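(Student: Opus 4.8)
The plan is to obtain the whole list by specializing the two-variable results already established, namely Proposition \ref{pro-bi-v} and Lemma \ref{ch-op}: one simply freezes the target $Y$ (for part (1)) or the source $X$ (for part (2)) and retains only those operations of Definition \ref{ppp of cbc} that leave the frozen factor untouched. For part (1) these are the proper pushforward $f_*$ attached to a proper $f\colon X\to X'$, the smooth pullback $f^*$ attached to a smooth $f\colon X'\to X$, and the Chern operator $c_1(L)\bullet$ attached to a line bundle $L$ on $X$; for part (2) they are the smooth pushforward ${}_*g$, the proper pullback ${}^*g$, and the operator $\bullet\,c_1(M)$ for $M$ on $Y$. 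Since these restricted collections are closed among themselves, nothing new has to be verified beyond a dictionary between the items stated here and their parents.

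The dictionary for part (1) reads: (a) is ($A_2$)(a), (b) is ($A_3$)(a), (d) is ($A_{23}$)(c), (e) is the first equality of Lemma \ref{ch-op}(4), (f) is the first equality of Lemma \ref{ch-op}(5), and (g) is the first equality of Lemma \ref{ch-op}(2); item (c) is the definition of $c_1(L)\bullet$ given just before Lemma \ref{ch-op} (which raises degree by one, as a single line bundle is appended to the tuple) together with part (1) of that lemma for the independence on the isomorphism class of $L$. For part (2) the dictionary is the mirror image: (a) is ($A_2$)(b), (b) is ($A_3$)(b), (d) is ($A_{23}$)(d), (e) and (f) are the second equalities of Lemma \ref{ch-op}(4) and Lemma \ref{ch-op}(5), (g) is the second equality of Lemma \ref{ch-op}(2), and (c) is the definition of $\bullet\,c_1(M)$ with Lemma \ref{ch-op}(1). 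In each case the unused slot ($Y$ in (1), $X$ in (2)) is merely carried along verbatim, and the degree shifts by $\op{dim}f$ or $\op{dim}g$ are exactly those recorded in Proposition \ref{pro-bi-v}.

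The only point that deserves a sentence of proof, rather than a citation, is the legitimacy of ``freezing a factor'': one has to observe that $f_*$, $f^*$ and $c_1(L)\bullet$ never alter the map $V\to Y$ nor its pullbacks of line bundles, and symmetrically that ${}_*g$, ${}^*g$ and $\bullet\,c_1(M)$ never alter $X\xleftarrow{p}V$. This is immediate from the formulas of Definition \ref{ppp of cbc}: $f_*$ only postcomposes $p$ with $f$, $f^*$ replaces $V$ by the fiber product $X'\times_X V$ and composes the old $s$ with the smooth base-change map $f'$, and $c_1(L)\bullet$ simply appends $p^{*}L$ to the tuple of line bundles; dually on the other side. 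Hence I expect no genuine obstacle — the proof is a bookkeeping exercise matching each clause to an already-proved statement and checking that the grading conventions align — and the write-up can reasonably be compressed to the remark that Proposition \ref{pro-bi-v} and Lemma \ref{ch-op} restrict to the sub-collections of operations preserving a fixed first, respectively second, argument.
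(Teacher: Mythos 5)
Your proposal is correct and coincides with what the paper does: the proposition is stated explicitly "for the sake of convenience" as a list obtained by freezing one argument, and each clause is exactly the corresponding item of Proposition \ref{pro-bi-v} (($A_2$), ($A_3$), ($A_{23}$)(c),(d)) or of Lemma \ref{ch-op} (and the definition of the Chern class operators), so no new argument is needed. Your dictionary matches the paper's implicit one item by item, including the grading shifts by $\op{dim}f$ and $\op{dim}g$.
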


\vspace{0.5cm}

{\bf Acknowledgements}: The author would like to thank the anonymous referee for his/her careful reading of the manuscript and valuable comments and constructive suggestions. This work was supported by JSPS KAKENHI Grant Numbers JP16H03936 and JP19K03468. \\


\end{document}